\documentclass[11pt]{amsart}
\textwidth=6.1in
\hoffset=-0.5in
\usepackage[pctex32]{graphics}
\usepackage{amsfonts}
\usepackage{amssymb,amscd,latexsym}
\usepackage{amsmath}
\usepackage{epsfig}
\usepackage{multirow}
\usepackage{color}
\usepackage[dvipsnames,table]{xcolor}
\usepackage{fancyvrb}   
 
\definecolor{airforceblue}{rgb}{0.36, 0.54, 0.66}
\definecolor{darkorchid}{rgb}{0.6, 0.2, 0.8}
\definecolor{darkorange}{rgb}{1.0, 0.55, 0.0}
\definecolor{darkspringgreen}{rgb}{0.09, 0.45, 0.27}
\usepackage[arrow,curve,matrix,tips,frame]{xy}
\usepackage{adjustbox}

\usepackage[utf8]{inputenc}
\usepackage[english]{babel} 
\usepackage{enumerate}
\usepackage{lscape}
\usepackage{hyperref} 
\usepackage{verbatim}
                 \hypersetup{ pdfborder={0 0 0}, 
                              colorlinks=true, 
                              linktoc=page,
                              pdfauthor={F. Russo and G. Staglian{\`o}}, 
                              pdftitle={Trisecant flop and rationality}}

\newtheorem{thm}{Theorem}[section]
\newtheorem{cor}[thm]{Corollary}
\newtheorem{prop}[thm]{Proposition}
\newtheorem{Proposition}[thm]{Proposition}
\newtheorem{defin}[thm]{Definition}

\newtheorem{lema}[thm]{Lemma}

\theoremstyle{remark}
\newtheorem{Remark}[thm]{Remark}
\newtheorem{Assumption}{Assumption}
\theoremstyle{definition}
\newtheorem{ex}[thm]{Example}
\newtheorem*{meth}{Geometric Method}

\newcommand{\PP}{{\mathbb{P}}}

\newcommand{\map}{\dasharrow}

\newcommand{\Tan}{\operatorname{Tan}}
\newcommand{\red}{\operatorname{red}}
\newcommand{\reg}{\operatorname{reg}}

\def\p{\mathbb P}

\def\I{\mathcal I}
\renewcommand{\O}{\mathcal O}
\def\rk{\operatorname{rk}}

\def\h{\operatorname{h}}
\def\axis{\operatorname{axis}}
\newcommand{\prim}{\operatorname{prim}}
\def\Sing{\operatorname{Sing}}

\newcommand{\Num}{\operatorname{Num}}

\newcommand{\Bl}{\operatorname{Bl}}

\newcommand{\Sec}{\operatorname{Sec}}

\let\calligraphy=\mathcal

%
%

\def\GG{{\mathbb G}}

\def\PP{{\mathbb P}}

%
%

\def\Oo{{\calligraphy O}}

%
%

%
%

%
%

%
%

%
%


%
%

\def\Bl{\operatorname{Bl}}

\def\Al{\operatorname{Al}}

\def\Pic{\operatorname{Pic}}

\def\Trisec{\operatorname{Trisec}}

\def\Sing{\operatorname{Sing}}

\def\Hilb{\operatorname{Hilb}}

%
%

%
%

\def\inv{^{-1}}

\let\phi=\varphi

\def\length{\operatorname{length}}

\numberwithin{equation}{section}

\begin{document}

\title[Trisecant flops and rationality]{Trisecant flops, their associated K3 surfaces and the rationality of some cubic  fourfolds}

\subjclass[2010]{Primary 14E08; Secondary 14M20, 14M07, 14N05, 14J28, 14J70}
\keywords{Rationality of cubic fourfolds, Flops, Mori Theory}

\author[F. Russo]{Francesco Russo*}
\address{Dipartimento di Matematica e Informatica, Universit\` a degli Studi di Catania, 
Viale A. Doria 5, 95125 Catania, Italy
}
\email{frusso@dmi.unict.it  giovanni.stagliano@unict.it}
\thanks{*Both authors were partially  supported  by the PRIN 2015 {\it Geometria delle variet\`{a} algebriche}; the first author is a member of the G.N.S.A.G.A. of INDAM}

\author[G. Staglian\` o]{Giovanni Staglian\` o*}

\begin{abstract}
We provide a new construction of rationality for cubic fourfolds via Mori's theory and the minimal model program. 
As an application we present the solution of the Kuznetsov's conjecture for $d=42$ (the first open case).
Our methods also show an explicit  connection between the rationality of cubic fourfolds belonging to the first four admissible families
$\mathcal C_d$, with $d=14,26,38$, and $42$  and some birational models of K3 surfaces of degree $d$ contained in well known rational Fano fourfolds.

 \end{abstract}

\maketitle

\begin{center}{\it 
Dedicated to Antonio Lanteri on the
occasion of his seventieth birthday}
\end{center}

\section*{Introduction}

The study of the rationality of higher dimensional Fano manifolds is a very active area of research.  Many new and interesting contributions and conjectures appeared in the last decades,  mostly concerning the irrationality of very general Fano complete intersections (see for example \cite{Totaro2015,Schreieder2019,NOtte}; and also \cite{KollarHyp} and references therein).
Deep recent contributions in \cite{KontsevichTschinkelInventiones} imply  that the locus of geometrically rational fibers in a smooth family of projective manifolds is closed under specialisation, improving substantially our understanding of the loci of rational objects in the corresponding moduli spaces (see \cite{HPT} for very significant examples in dimension four). Notwithstanding,  the irrationality of the very general cubic fourfold and the complete description of the rational ones remain two of the most challenging open problems. 

A great amount of recent theoretical work on cubic fourfolds (see for example the surveys \cite{Levico, kuz2}) lead to the expectation that  the very general ones might be irrational and to the specification of  infinitely many irreducible  divisors  $\mathcal C_d$ of {\it special admissible cubic fourfolds of discriminant $d$} in the moduli space $\mathcal C$, whose union should be  the locus  of rational cubic fourfolds  ({\it Kuznetsov Conjecture}). According to this conjecture, the rationality of cubics in $\mathcal C_d$ depends on the existence of an {\it associated} K3 surface  in the sense of Hassett/Kuznetsov, see \cite{Levico, kuz2}.

The first admissible values are   $d=14, 26, 38, 42, 62, 74, 78,  86$. Our main applications of the new methods developed here will be the theoretical explanation of the role of (non minimal) K3 surfaces in determining the rationality for $d=14, 26, 38$ and 42 together with the proof  of the rationality of every cubic fourfold in $\mathcal C_{42}$ (the first open case of the conjecture) via the construction of a surface of degree nine and genus two with five nodes admitting a congruence of 8--secant twisted cubics and contained in a general cubic fourfold  in 
$\mathcal C_{42}$, see  Theorem \ref{C42_Flop}. Let us recall that Fano showed the
rationality of a general cubic fourfold in $\mathcal C_{14}$ (see \cite{Fano, BRS}), while every cubic fourfold  in the irreducible divisors $\mathcal C_{26}$ and $\mathcal C_{38}$  is rational by the   main results of \cite{RS1} (see also \cite{Explicit} and Section \ref{explicitflops} here). The proofs in \cite{RS1}  were achieved by constructing surfaces $S_d\subset \p^5$, contained in a general cubic fourfold of $\mathcal C_d$ and admitting a four dimensional family of $(3e-1)$-secant curves of degree $e\geq 2$ parametrized by a rational variety with the property  that through a general point of $\p^5$ there passes a unique  curve of the family. Then the cubics through $S_d$ become  rational sections of the universal family and  hence are birational to the rational parameter space  (see also Theorem \ref{criterion} here). 
 
This approach did not clarify the relation with the associated K3 surfaces and even the construction of explicit birational maps from general cubics $X\subset\p^5$ in $\mathcal C_{26}$ and in $\mathcal C_{38}$ to $\p^4$ (or to other notable rational smooth fourfolds $W$) in \cite{Explicit}  apparently did not provide a birational incarnation in $\p^4$ (or in $W$) of a  K3 surface of  genus 14, respectively of  genus 20, determining the linear system of the inverse map. Indeed, the  base loci of the linear systems of hypersurfaces of degree $3e-1$ having points of multiplicity $e$  along the corresponding $S_d$'s 
giving the birational maps $\mu:X\map \p^4$   are intractable reducible schemes while the base loci of the inverse maps $\mu\inv:\p^4\map X\subset\p^5$ are even worse
(see \cite[Table 2]{Explicit}). 

Here we study all these phenomena via Mori Theory and via the Minimal Model Program to explain the birational nature of the  maps $\mu:X\map W$ introduced above and of their inverses which allowed us to produce a suitable birational factorization described in diagram \eqref{diagramma1} below. This approach provides a geometric description of  the support of the base loci and, finally,  illustrates the relations of these explicit birational maps with the K3 surfaces {\it associated} to $X$, see Subsections \ref{assK3C38} and \ref{TF42}. Last but not least, computer-aided methods 
play a central role in some key points, also due to the complexity of the geometry involved.

Our method starts with the observation that many of the known examples of surfaces $S_d\subset\p^5$ used to describe the special divisors $\mathcal C_d$ for small $d$ (not only the  admissible ones) have ideal generated by cubic forms defining
a map  $\phi:\p^5\map Z\subset\p^N$ which is birational onto its image. The restriction to a general cubic $X$ through $S_d$ defines a birational map $\phi:X\map Y\subset\p^{N-1}$ with $Y$ a general hyperplane section of $Z$. In many cases, the birational morphism $\tilde\phi:X'=\Bl_{S_d}X\to Y$  is a small contraction, whose exceptional locus consists of a (union of) smooth surface(s) $T'\subset X'$ ruled by the strict transforms of trisecant lines to $S_d$. Since $K_{X'}$ is zero on the strict transforms of trisecant lines, the map $\tilde\phi$ is a flop small contraction. In Theorem \ref{trisflop} we show that under the previous hypothesis there exists a flop $\tilde\psi:W'\to Y$ of the surface $T'\subset X'$ with $W'$ a smooth projective fourfold. This {\it trisecant flop} $\tau:X'\map W'$ is constructed by analyzing the  splitting of $N^*_{T'/X'}$ restricted to a general strict transform of a trisecant line to $S_d\subset\p^5$,
see Remark \ref{gentris}. The existence of a congruence of $(3e-1)$-secant rational curves to $S_d$ of degree $e\geq 2$ produces an extremal ray on $W'$   with divisorial locus, giving  a birational morphism $\nu:W'\to W$ with $W$ a $\mathbb Q$-factorial Fano variety having $\Pic(W)\simeq \mathbb Z$, see Theorem~\ref{contraction}. 
The birational morphism $\nu$ is (generically) the  blow-up of an irreducible surface $U\subset W$, 
which for the  admissible cases $d=14, 26, 38, 42$ is a birational incarnation of the associated K3 surface to $X$. 
Moreover, the map  $\mu=\nu\circ\tau\circ\lambda^{-1}:X\map W$ is given by a linear
system of forms of degree $3e-1$ with points of multiplicity at least $e$ along $S_d$,
while $\mu\inv:W\map X$ is given by a linear system of divisors in $|\Oo_W(e\cdot i(W)-1)|$ 
having points of multiplicity at least $e$ along $U$, where $i(W)$ is the index of $W$. Everything is captured by the following diagram, where $R'\subset W'$ is the strict transform on $W'$ of the locus $R\subset W$ of $i(W)$-secant lines to $U$:
\begin{equation}\label{diagramma1}
 \UseTips
 \newdir{ >}{!/-5pt/\dir{>}}
 \xymatrix{
 &\Bl_{T'}X'=\Bl_{R'}W'\ar[ld]_\sigma\ar[rd]^{\omega}&\\             
 X' \ar@{-->}[rr]^{\tau}\ar[rd]^{\tilde\phi}\ar[d]_\lambda &                             &W'\ar[ld]_{\tilde\psi}\ar[d]^\nu\\
X\ar@{-->}[r]^{\phi}\ar@{-->}@/_0.6cm/[rr]_{\mu} &Y& W \ar@{-->}[l]_{\psi}    }
\end{equation}
In the explicit examples with $d=14,26,38,42$ considered here, diagram \eqref{diagramma1} together with  some standard computations of the middle cohomology of fourfolds  under the blow-up of smooth surfaces (see for example the discussion in \cite[pages 45--46]{Levico})  shows that the (possibly singular non minimal) K3 surface $U\subset W$  is a birational incarnation of the K3 surface {\it associated} to $X$ via Hodge Theory or, equivalently, via Derived Category (see Subsection  \ref{assK3C38} for the complete analysis of the case $d=38$).

The above theoretical results (and all the examples we have constructed) point out that, for low values of the discriminant $d$,  the {\it birational association}  between admissible cubic fourfolds and K3 surfaces  passes through the construction of very special (and in many cases also singular) non minimal birational models of these surfaces in the fourfolds $W$ by mean of  peculiar linear systems of hyperplane sections (often with base points of high multiplicities) on the associated K3 surfaces, exactly as in the case $d=14$ considered by Fano. In particular, we are  able to prove the rationality of every cubic fourfold in $\mathcal C_d$ for $d=14,26, 38, 42$ via  trisecant flops and to show that their  inverses are determined by linear systems having a prescribed multiplicity along (non minimal) birational models of the associated K3 surfaces.

 Some of these examples of non minimal K3 surfaces have been also studied by Voisin in \cite[\S 4]{Voisin} to prove  vanishing results related to Lehn's Conjecture and have been later  considered by Fontanari and Sernesi in \cite[Theorem 10]{FS}.   Our constructions of K3 surfaces are based on a different geometric method developed in Section \ref{geomet} and on computations, which among other things also provide explicit equations for
 the general K3 surfaces of genera $8$, $14$, $20$, and $22$. 

Since  the Kodaira dimension of $\mathcal C_d$ is nonnegative  for $d\geq 86$ admissible (see \cite{TaVaA} and also \cite[Proposition 1.3]{Nuer}), 
a description of $\mathcal C_d$ via surfaces $S_d$'s with at least two cubics containing them as the one considered here (and also in \cite{Nuer, RS1, Explicit})  is not allowed anymore in this range because this would imply the uniruledness of $\mathcal C_d$ (see the argument in the proof of Theorem  \ref{C42S42}). The admissible values $d=62, 74, 78$ are  the last ones for which the elements in  $\mathcal C_d$ may be arranged into homaloidal linear systems of cubics through an irreducible surface $S_d$, simplifying the construction of $U$ and $W$ via the trisecant flop. For  $d\geq 86$ admissible one needs  to consider suitable generalisations of this approach, see  Remark \ref{hypothesis}, increasing substantially the difficulty of the conjecture.

The techniques introduced here open  the way to further applications of this circle of ideas  to prove the rationality of other classes of Fano fourfolds, see \cite[Section 4]{Explicit} and \cite{HS}.
\vskip 0.2cm

{\bf Acknowledgements}.  This project started from an intuition
of J\' anos Koll\' ar that  the maps $\mu:X\map W\subseteq\p^N$ might be useful to describe  the congruences (see \cite[Subsection~1.1]{Explicit}, Subsection \ref{nec}  here, and
\cite[Section 5, \S 29]{KollarHyp}). We are very indebted
to him for the suggestion.
We wish to thank Michele Bolognesi for  his continuous support 
along the years and for many useful conversations on these  topics.
We are also very grateful  to Sandro Verra for several discussions on the  subject and to Brendan Hassett  for his remarks on  a preliminary version of the manuscript. We thank the referees for their careful reading and for their useful comments. 
\section{Preliminaries}

\subsection{Congruences of \texorpdfstring{$(3e-1)$}{(3e-1)}-secant curves of degree \texorpdfstring{$e$}{e} to surfaces in \texorpdfstring{$\p^5$}{P5}}\label{nec}

The following definitions have been introduced in \cite[Section 1]{RS1}. Let $\mathcal H$ be 
an irreducible proper family of  (rational or of fixed arithmetic genus) curves of degree $e$ in $\p^5$ whose general element is irreducible.
We have a diagram
\begin{equation}\label{diagramma congruenza}
\xymatrix{
 \mathcal D\ar[dr]^{p}\ar[d]_{\pi}&\\
\mathcal H&\p^5,}
\end{equation}
where $\pi:\mathcal D\to \mathcal H$
is the  universal family over $\mathcal H$ and where  $p:\mathcal D\to\p^5$ is  the tautological morphism. 
Suppose moreover that $p$ is birational and that a general member $[C]\in \mathcal H$ is ($re-1$)-secant to an irreducible surface $S\subset\p^5$,
that is $C\cap S$ is a length $r e-1$ scheme, $r\in\mathbb N$. We shall call such a family $\mathcal H$ (or $\mathcal D$ or $\pi:\mathcal D\to\mathcal H$) a {\it congruence of }  ($re-1$)-{\it secant curves of degree $e$ to $S$}.  Let us remark that necessarily $\dim(\mathcal H)=4$.
\medskip

An irreducible  hypersurface $X\in|H^0(\mathcal I_{S}(r))|$ is said to be {\it transversal to the congruence $\mathcal H$} if  the unique curve of the congruence passing through a general point  $p\in X$ is not contained in $X$. A crucial result is the following.

\begin{thm}\label{criterion} {\rm \cite[Theorem 1]{RS1}} Let $S\subset\p^5$ be a surface admitting a congruence of {\rm(}$re-1${\rm)}-secant curves of degree $e$ parametrized by $\mathcal H$.
If  $X\in|H^0(\mathcal I_{S}(r))|$ is an irreducible hypersurface transversal to $\mathcal H$,  then $X$ is birational to $\mathcal H$.

If the map   
$\Phi=\Phi_{|H^0(\mathcal I_{S}(r))|}:\p^5\map \p(H^0(\mathcal I_{S}(r)))$
is birational onto its image, then a general  hypersurface  $X\in|H^0(\mathcal I_{S}(r))|$  is birational to $\mathcal H$.

Moreover, under the previous hypothesis on $\Phi$, if  a general  element in $|H^0(\mathcal I_{S}(r))|$ is smooth, then every $X\in |H^0(\mathcal I_{S}(r))|$ with at worst rational singularities is birational to $\mathcal H$.
\end{thm}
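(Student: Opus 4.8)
The plan is to establish the three assertions of Theorem~\ref{criterion} in increasing order of generality, reducing each to the congruence structure on $\mathcal H$ and the birational geometry of the tautological morphism $p:\mathcal D\to\p^5$. First I would treat the case of a fixed irreducible hypersurface $X\in|H^0(\mathcal I_S(r))|$ transversal to $\mathcal H$. The key observation is that the congruence gives, through a general point $q\in\p^5$, a \emph{unique} curve $[C_q]\in\mathcal H$ (this is exactly the content of $p$ being birational: the fiber $p^{-1}(q)$ is a single reduced point). I would define a rational map $\mathcal H\map X$ by sending a general $[C]\in\mathcal H$ to the residual intersection point of $C$ with $X$: since $C$ is $(re-1)$-secant to $S$ and $X\in|H^0(\mathcal I_S(r))|$ contains $S$, B\'ezout forces $C\cdot X = re$ counted with the $re-1$ contribution already absorbed by $C\cap S\subset S\subset X$, leaving exactly one residual point $C\cap X$ outside $S$. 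Transversality guarantees $C\not\subset X$, so this residual point is well-defined and lies on $X$. Conversely, the map $X\map\mathcal H$ sending a general $p\in X$ to the unique curve $[C_p]$ of the congruence through $p$ is the inverse rational map, by uniqueness from the birationality of $p$. Checking that these two constructions are genuinely inverse to one another on dense open subsets is the heart of the first statement.

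For the second assertion I would show that if $\Phi=\Phi_{|H^0(\mathcal I_S(r))|}$ is birational onto its image, then a general $X\in|H^0(\mathcal I_S(r))|$ is automatically transversal to $\mathcal H$, so the first part applies. The mechanism is that $\Phi$ contracts each curve $C$ of the congruence to a point (because $C$ meets the generic such hypersurface $X$ in a single residual point, i.e.\ $C$ is sent into a fiber of $\Phi$); if the generic $X$ failed to be transversal, the unique congruence curve through a general point would lie inside $X$, forcing $\Phi$ to contract a positive-dimensional family covering $\p^5$ in a way incompatible with $\Phi$ being birational onto its image. I would make this precise by a dimension count: a general member of the linear system pulls back under the universal family to a divisor whose restriction to a general fiber $C$ of $\pi$ is a single point, and the birationality of $\Phi$ rules out the degenerate behaviour. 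This step is essentially showing that the generic hypersurface in the homaloidal-type system cannot contain a congruence curve through its general point.

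The third assertion extends the conclusion from smooth general $X$ to every $X\in|H^0(\mathcal I_S(r))|$ with at worst rational singularities. Here the strategy is a specialization or flatness argument: the birational equivalence $X\biriso\mathcal H$ constructed for general $X$ should be seen as a rational map over the base of the linear system, and I would argue that the property of being birational to $\mathcal H$ is preserved under specialization to any $X$ with rational singularities. The rational-singularities hypothesis is what allows one to control the indeterminacy and ensure that the generic fibration by congruence curves persists on the special fiber without collapsing; rational singularities guarantee the relevant cohomological vanishing so that the birational map does not acquire new exceptional behaviour. I expect \textbf{this third step to be the main obstacle}, since passing from the generic member to an arbitrary (possibly singular) member requires genuine control of how the congruence $\mathcal H$ meets $X$ when $X$ degenerates: one must rule out that the congruence curves become tangent to, or contained in, the singular locus of $X$ in a way that destroys the one-to-one correspondence. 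The cleanest route is probably to invoke the birational invariance afforded by rational singularities together with the fact that $S\subset X$ and the congruence are fixed data independent of $X$, so that the residual-point construction survives verbatim as long as $C\not\subset X$ for the general congruence curve --- a condition that again follows from the birationality of $\Phi$ on the ambient $\p^5$ rather than from smoothness of $X$ itself.
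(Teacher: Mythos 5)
Your first two steps follow the same route as the actual proof (note that this paper only quotes the statement from \cite{RS1}; its introduction summarizes the argument exactly as you set it up): a transversal $X$ is a rational section of the universal family $\pi:\mathcal D\to\mathcal H$, the section being the residual-point map $[C]\mapsto (C\cap X)\smallsetminus (C\cap S)$, inverse to $x\mapsto [C_x]$, and birationality of $\Phi$ forces transversality of the general member. In the second step one precision is missing: the fact that each member cuts a congruence curve $C$ in a single residual point only says that the moving part of the linear system restricted to $C$ has degree at most one, so $\Phi(C)$ is a point \emph{or a line}; contraction is not automatic. The correct mechanism is linear: if the general member through a general point $x$ contained $C_x$, then --- containment of a fixed curve being a linear condition on the system --- \emph{every} member through $x$ would contain $C_x$, which means precisely that $\Phi$ is constant on $C_x$; since these curves cover $\p^5$, the general fiber of $\Phi$ would be positive dimensional, contradicting birationality. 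Modulo this detail, your first two steps are correct and essentially coincide with the source's proof.

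The genuine gap is in your third step, and it is twofold. First, your fallback claim that transversality of an \emph{arbitrary} irreducible member ``follows from the birationality of $\Phi$ on the ambient $\p^5$ rather than from smoothness of $X$'' is unjustified: a birational map may well contract divisors, so the locus swept out by the positive-dimensional fibers of $\Phi$ is a proper closed subset of $\p^5$ which can contain a \emph{special} member $X_0$ of the linear system as a component; nothing in the birationality of $\Phi$ prevents such an $X_0$ from being covered by congruence curves. The genericity in step two (a general member through a general point) is essential and cannot be recycled for a fixed member. Observe also that if your claim were true, every irreducible member would be transversal and the first assertion would already yield the conclusion, making the rational-singularities hypothesis vacuous --- a clear sign the reasoning is off. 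Second, your main route --- specialization together with ``the relevant cohomological vanishing'' from rational singularities --- is a wish, not an argument: specializing the birational equivalence $X_t\biriso\mathcal H$ from the general smooth member to a singular member is exactly the hard point of the clause, and no soft flatness or vanishing statement produces it. Specialization of birational (or rational) type onto a singular special fiber is a deep matter: in dimension three it is a theorem of de Fernex and Fusi, and for the fourfold applications in this paper the passage from the general to \emph{all smooth} cubics in $\mathcal C_d$ is done only by invoking \cite{KontsevichTschinkelInventiones}, whose hypotheses do not simply absorb a rationally singular special fiber. To close your step you would need either to prove that a member with at worst rational singularities is necessarily transversal to the congruence, or to state and verify a precise specialization theorem for birational types covering this singular situation; neither is supplied by your sketch.
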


Since $p:\mathcal D\to\p^5$ is birational, we also have a rational map $$\varphi=\pi\circ p^{-1}:\p^5\map\mathcal H,$$ whose  fiber through a general $p\in\p^5$, $F=\overline{\varphi^{-1}(\varphi(p))}$, is the unique curve of the congruence passing through $p$. 

It is natural to ask what linear systems on $\p^5$ give the abstract birational maps $\varphi:\p^5\map \mathcal H$ as above
or  their restrictions to a general $X\in |H^0(\mathcal I_S(r))|$. The
 linear system  $|H^0(\mathcal I_S^e(re-1))|$, when not empty,  contracts the fibers of $\varphi$ and in \cite{Explicit} we showed that, quite surprisingly, in many cases they can provide a birational geometric realization of $\varphi$  for $r=3$, yielding birational maps from cubic hypersurfaces through $S$ to  $\mathcal H$ with $\mathcal H=\p^4$ or with $\mathcal H$ a notable Fano fourfold.
In the sequel  we shall develop a theoretical framework for these phenomena in order to be able to understand also the birational maps defined by the previous linear systems.

\subsection{Divisorial contractions, small contractions and flops}\label{divisorial}
We introduce some general  definitions of the 
Minimal Model Program (MMP for short), adapting them to our setting. 
\medskip

Let $X$ be a smooth projective irreducible fourfold defined over the complex field with $\rho(X)=1$ (here $\rho(X)$ denotes
the Picard number of $X$) and let $\phi:X\map W$ be a birational map onto a smooth (or at least $\mathbb Q$--factorial) irreducible projective fourfold, whose base locus scheme contains a surface $S$ with at most a finite number of nodes.

Let $\lambda:X'=\Bl_SX\to X$ be the blow-up of $S$ and consider the  diagram:

\begin{equation}\label{diag00}
\UseTips
 \newdir{ >}{!/-5pt/\dir{>}}
 \xymatrix{
 \Bl_SX\ar[d]_\lambda\ar@{-->}[rrd]^{\tilde\phi}&                              \\
X\ar@{-->}[rr]_{\phi}& &W. 
}
\end{equation}

When $W$ is smooth, the complexity of the birational map $\phi:X\map W$ depends on the base locus scheme of $\tilde\phi:\Bl_SX\map W$.
Surely the easiest case to be considered is when $\tilde\phi:\Bl_SX\to W$ is a morphism, that is $\phi$ is a {\it special birational map} in the sense of  Semple and Tyrrell
(solved by a single blow-up along a smooth irreducible variety).

If $X\subset\p^5$ is a cubic fourfold and if $S\subset X$ is smooth,
few examples of special birational maps of the above type exist.
Two examples of maps of this kind were firstly considered by  Fano in \cite{Fano}, have been revisited in modern terms in \cite{AR,BRS} and played a fundamental role in the formulation of Kuznetsov Conjecture.
\begin{ex}\label{esempiFano}
Letting  $\phi:X\map W$ be a special birational map with $X\subset\p^5$ a cubic fourfold, letting $B\subset W$ be the base locus scheme of $\phi\inv$ and letting  $U=B_{\red}$, Fano's examples  are the following:
\begin{enumerate}[(i)]
\item\label{primoEs} $S\subset\p^5$ is a smooth quintic del Pezzo surface, $W=\p^4$, $\phi$ is given by $|H^0(\mathcal I_S(2))|$  and $U\subset\p^4$ is a surface of degree
9 and sectional genus 8 having at most a finite number of singular points corresponding to planes in $X$ spanned by conics in $S$.
If non singular, the surface $U$ is the projection from a 5-secant $\p^3\subset\p^8$ of a smooth K3 surface of degree 14 and genus 8 and $\phi\inv$ is given by  $|H^0(\mathcal I_U(4))|$.
\item $S\subset\p^5$ is a smooth quartic rational normal scroll, $W=Q\subset\p^5$ is a smooth quadric hypersurface, $\phi$ is given by $|H^0(\mathcal I_S(2))|$ and $U\subset\p^5$ is a surface of degree
10 and sectional genus 8 having at most a finite number of singular points corresponding to planes in $X$ spanned by conics in $S$.
If non singular, the surface $U$ is the projection from the tangent plane of  a smooth K3 surface of degree 14 and genus 8 and $\phi\inv$ is given by  $|H^0(\mathcal I_U(3))|_{|_Q}$.
\end{enumerate}
\end{ex}

\begin{Remark} The two surfaces $S\subset\p^5$ appearing in Example~\ref{esempiFano} are the only smooth surfaces in $\p^5$ admitting a congruence of secant lines ($r=3$ and $e=1$ in the definition), see for example \cite{OADP}. The lines
of the congruence contained in $X$ describe the exceptional locus $\overline E$ of $\tilde\phi$ (or equivalently the exceptional locus $\lambda(\overline E)$ of $\phi:X\map W$) and  are birationally parametrized by the surfaces $U\subset W$.
\end{Remark}

The general MMP philosophy suggests that meaningful birational properties of (rational) cubic fourfolds might be related to small contractions from $X'$.
So one can start to investigate birational properties of cubic fourfolds from the point of view of the MMP and, when they exist, to consider the most elementary links in the Sarkisov Program associated to small contractions, i.e.  flops and flips (one may consult  \cite{HMK} for results about this program in arbitrary dimension).

\begin{defin}{\rm
Let $X$ be a smooth  irreducible projective  variety (from now on a projective manifold) and let
$\tilde\phi:X\to Y$ be  a {\it small contraction}, i.e. $\tilde\phi$ is a birational morphism onto a normal variety $Y$ inducing an isomorphism in codimension one
and such that  $\rho(X/Y)=1$.

If  $K_{X}\cdot C=0$ for every irreducible curve contracted by $\tilde\phi$, then $\tilde\phi:X\to Y$ is called {\it a small flop contraction}.
A small flop contraction $\tilde\psi: W\to Y$ with $W$ a projective manifold 
is called a {\it flop of $\tilde\phi$}.

The resulting birational
map $\tau=\tilde\psi\inv\circ\tilde\phi:X\map W$ is usually called a {\it flop} if it is not an isomorphism. Since we assume $\rho(X/Y)=1=\rho(W/Y)$,  given $\tilde\phi$ one can prove that  the morphism $\tilde\psi$, if it exists,  is unique as soon as $\tau$ is not an isomorphism.}
\end{defin}

One can {\it flop} the small contraction $\tilde\phi:X\to Y$ by constructing a projective manifold $V$ and two birational morphisms $\sigma:V\to X$ and $\omega:V\to W$
such that $\sigma^*(K_X)=\omega^*(K_W)$. This means that  the exceptional locus of $\sigma$, which is divisorial by the smoothness of $X$,  is contracted by $\omega$ and that we have a commutative diagram: 
\begin{equation}\label{diag0}
\UseTips
 \newdir{ >}{!/-5pt/\dir{>}}
 \xymatrix{
 &V\ar[ld]_\sigma\ar[rd]^{\omega}&\\            
 X \ar@{-->}[rr]_{\tau}\ar[rd]_{\tilde\phi}&                             &W\ar[ld]^{\tilde\psi}\\
&Y&
}
\end{equation}

First of all one may ask if there exist flops of this kind on the fourfolds $X'=\Bl_SX$ obtained from cubic fourfolds $X\subset\p^5$ by blowing-up a mildly singular surface $S\subset X$.
 As we shall see this is the case under some hypothesis and this occurrence is deeply related to the rationality of some {\it special} cubic fourfolds (or of other {\it special} fourfolds).

\subsection{Condition \texorpdfstring{$\mathcal K_3$}{K3} and examples of small contractions on cubic fourfolds}\label{sbsK3}
Let us recall that, given homogeneous forms $f_i$ of degree $d_i\geq 1$, $i=0,\ldots,M$, a vector of homogenous forms $(g_0,\ldots,g_M)$
is {\it a syzygy} if $\sum_{i=0}^Mf_ig_i=0$. If $d_1=\cdots=d_M=d$ and if $\deg(g_i)=h$ for every $i=0,\ldots, M$, then  we say that $(g_0,\ldots, g_M)$
is a syzygy of degree $h$ and for $h=1$ we shall say that the syzygy is {\it linear}. For $i<j$ the syzygies $(0,\ldots,0,f_j,0,\ldots,0, -f_i,0,\ldots0)$,
corresponding to the trivial identity $f_if_j+f_j(-f_i)=0$ are called {\it Koszul syzygies}. We say that the Koszul syzygies are generated by 
the linear ones if they belong to the submodule generated by the linear syzygies. This is the {\it condition $\mathcal K_d$} introduced by Vermeire in  \cite{Vermeire}.

The next result provides  a wide class of examples of rational maps with linear fibers (hence birational under mildly natural geometrical assumptions on their base locus scheme).

\begin{prop}\label{verme} {\rm (\cite[Proposition 2.8]{Vermeire}\footnote{The hypothesis on the absence of lines in the base locus scheme of $\phi$ appearing  in the original version is not necessary (see the first arXiv versions of \cite{BRS} for a proof).})} Let $f_0,\ldots, f_M$ be homogeneous forms in $N+1$ variables of degree $d\geq 2$ satisfying condition $\mathcal K_d$.Then the closure of each fiber of the rational map
$$\phi=(f_0:\cdots:f_M):\p^N\map\p^M$$
is a linear space $\p^s$. For $s>0$  the closure of the fiber intersects scheme theoretically the base locus scheme of $\phi$ along a hypersurface of degree $d$.
\end{prop}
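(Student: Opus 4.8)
The plan is to exploit condition $\mathcal K_d$ directly: the linear syzygies furnish enough linear forms vanishing on a fiber to cut it out, while the Koszul relations, being generated by the linear ones, force the cutting-out scheme to be \emph{linear} rather than merely a lower-degree complete intersection. First I would fix a general point $q\in\p^N$ lying outside the base locus $\Bs(\phi)=V(f_0,\ldots,f_M)$ and let $p=\phi(q)=(f_0(q):\cdots:f_M(q))$. The closed fiber $F=\overline{\phi^{-1}(p)}$ is the vanishing locus of the $2\times 2$ minors of the matrix
\begin{equation}\label{minormatrix}
\begin{pmatrix} f_0 & \cdots & f_M \\ f_0(q) & \cdots & f_M(q)\end{pmatrix},
\end{equation}
i.e.\ of the forms $f_i(q)f_j-f_j(q)f_i$ of degree $d$. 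These are precisely the evaluations at $q$ of the Koszul syzygies. The crux is that, by condition $\mathcal K_d$, each such Koszul relation $(\ldots,f_j,\ldots,-f_i,\ldots)$ is an $R$-linear combination of linear syzygies $(\ell_0^{(k)},\ldots,\ell_M^{(k)})$ with $\ell_t^{(k)}$ of degree $1$ and $\sum_t \ell_t^{(k)}f_t=0$.

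Next I would evaluate this combination at $q$. Each linear syzygy gives an identity $\sum_t \ell_t^{(k)}f_t=0$; specialising the \emph{coefficients} at $q$ produces, from the relation $f_i(q)f_j-f_j(q)f_i=\sum_k h_k\,(\text{linear syzygy}_k)$, an expression of the quadratic-in-$f$ minors in terms of the single linear forms $L_k:=\sum_t \ell_t^{(k)}(q)\,x_t$ obtained by plugging $q$ into the degree-one entries. The upshot is that the ideal generated at $q$ by the minors in \eqref{minormatrix} is contained in the ideal generated by the linear forms $L_k$ together with the base-locus relations; concretely, $F$ is contained in the linear space $\Lambda=V(\{L_k\})$. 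Conversely any point of $\Lambda$ satisfying the defining equations maps to $p$, so after checking that the linear forms $L_k$ do not over-cut (they vanish identically on $F$ because the linear syzygies hold as polynomial identities), one gets $F=\Lambda\cong\p^s$ for some $s\ge 0$. This is the heart of the statement and the step I expect to be the main obstacle, since one must verify carefully that passing from the module-level identity to the evaluated identity does not introduce spurious components and that the resulting $\Lambda$ is genuinely a linear subspace cut out by the $L_k$ alone.

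For the final assertion, assume $s>0$ and consider the scheme-theoretic intersection $F\cap \Bs(\phi)$. On the linear space $F\cong\p^s$ the rational map $\phi$ restricts to a constant, so each $f_i$ restricted to $F$ is a \emph{proportional} family of degree-$d$ forms: writing $f_i|_F=f_i(q)\cdot g$ for a single form $g$ of degree $d$ on $\p^s$ (the common value up to the constants $f_i(q)$), the base locus $\Bs(\phi)\cap F$ is exactly $V(g)\subset F$, a hypersurface of degree $d$ in $\p^s$. The only subtlety is to argue that the $f_i|_F$ are honestly all proportional to a common $g$ rather than vanishing identically; this follows because $F$ meets the complement of $\Bs(\phi)$ (it contains the general point $q$), so $g\not\equiv 0$, and the proportionality is forced by $\phi$ being constant on $F$. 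I would close by noting that $V(g)$ has codimension one in $F$ and degree $d$, which is the claimed description.

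Throughout I would lean on Proposition~\ref{verme}'s hypotheses exactly as stated and avoid any genericity beyond choosing $q\notin\Bs(\phi)$; the argument is characteristic-free in spirit but, matching the paper's standing assumption, takes place over $\C$. The one place demanding genuine care, and hence where a reader should slow down, is the evaluation-of-syzygies step, where condition $\mathcal K_d$ is the indispensable input converting the a priori quadratic equations of $F$ into linear ones.
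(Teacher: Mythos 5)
Your overall plan is the right one---it is essentially the mechanism behind Vermeire's own proof of \cite[Proposition~2.8]{Vermeire}, which the paper quotes without reproving---but the pivotal step, precisely the one you flag as the main obstacle, is carried out with the wrong evaluation and fails as written. Your linear forms $L_k=\sum_t \ell_t^{(k)}(q)\,x_t$ are not the right objects: the index $t$ runs over $0,\ldots,M$ while the $x_t$ are the $N+1$ ambient coordinates, and under the charitable reading in which $x_t$ stands for $f_t$, the forms $\sum_t\ell_t^{(k)}(q)f_t$ have degree $d$, not $1$, so they cannot exhibit the fiber closure as a linear space. The evaluation must go into the \emph{other} slot: for each linear syzygy $(\ell_0^{(k)},\ldots,\ell_M^{(k)})$ set $L_k:=\sum_t f_t(q)\,\ell_t^{(k)}$, a genuine linear form. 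If $f_t(y)=c\,f_t(q)$ for all $t$ with $c\neq 0$, then $0=\sum_t\ell_t^{(k)}(y)f_t(y)=c\,L_k(y)$, so the $L_k$ vanish on the fiber through $q$, hence on its closure $F$; and pairing the relation $K_{ij}=\sum_k h_k\lambda^{(k)}$ given by condition $\mathcal K_d$ (with $K_{ij}$ the Koszul syzygy) against the constant vector $(f_0(q),\ldots,f_M(q))$ yields the \emph{exact} identity $f_i(q)f_j-f_j(q)f_i=\sum_k h_k L_k$. Exactness matters: your hedge that the minors lie in the ideal of the $L_k$ ``together with the base-locus relations'' would ruin the converse inclusion, because modulo $(L_1,L_2,\ldots)$ a correction term $\sum_t a_tf_t$ has no reason to vanish at a point of $\Lambda=V(\{L_k\})$ lying off the base locus.

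A second inaccuracy compounds this: the vanishing locus of the minors is not $F$ but $F\cup\Bs(\phi)$, since every minor vanishes on the base scheme. So even with the corrected identity you only get $\Lambda\subseteq F\cup\Bs(\phi)$, and you must add the (easy but necessary) argument that $\Lambda$, being a linear space, is irreducible and contains $q\notin\Bs(\phi)$, whence $\Lambda\subseteq F$; combined with $F\subseteq\Lambda$ this gives $F=\Lambda\simeq\p^s$. Your phrase ``any point of $\Lambda$ satisfying the defining equations maps to $p$'' silently ignores the base-locus component. By contrast, your proof of the second assertion is sound: the minors vanish on the dense open subset $F\setminus\Bs(\phi)$ and $F$ is integral, so choosing $i_0$ with $f_{i_0}(q)\neq 0$ one gets $f_t|_F=f_t(q)\,g$ with $g=f_{i_0}|_F/f_{i_0}(q)$ of degree $d$ and $g\not\equiv 0$ because $q\in F$, and scheme-theoretically $F\cap\Bs(\phi)=V(g)$, a degree-$d$ hypersurface of $F$. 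Note finally that, as you observe, no genericity of $q$ beyond $q\notin\Bs(\phi)$ is used, which is exactly what the claim about \emph{each} fiber requires.
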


\begin{Remark}\label{linsyz}
Suppose that an irreducible surface $S\subset\p^5$ is scheme-theoretically defined by cubic equations satisfying  condition $\mathcal K_3$. Then, by Proposition \ref{verme},  every positive dimensional fiber of $\phi:\p^5\map Z$ is a linear space $\p^s$ cutting $S$
in a cubic hypersurface $S\cap \p^s$ if $s>0$. In particular $0\leq s\leq 2$ (except some trivial cases) and $s=2$ occurs only for planes  spanned by cubic curves contained in  $S$, which are mapped to a point by $\phi$. Hence if condition $\mathcal K_3$  for $S\subset\p^5$ holds and if a general cubic $X\subset\p^5$ through $S$ does not contain any plane spanned by cubic curves on $S$,  the exceptional locus $T\subset X$ of the restriction of $\phi$ to $X$ is ruled by proper trisecant lines. As we shall see in Section \ref{hilbtris}  the expected dimension of $T$ is two
so that  surfaces in $\p^5$ defined by cubic equations satisfying condition $\mathcal K_3$ may  naturally produce examples of small contractions on $X'=\Bl_SX$.
\end{Remark}

\section{The trisecant flop and the extremal congruence contraction} \label{s2}

We first introduce and study the behaviour of trisecant lines to a {\it general}  non degenerate irreducible projective surface $S\subset\p^5$.

\subsection{The Hilbert scheme of trisecant lines  to \texorpdfstring{$S\subset\p^5$}{S in P5}}\label{hilbtris} For the generalities we shall follow the treatment in \cite{Bauer}.
Let $\Hilb^r\p^5$ (respectively $\Hilb^r S$) be the Hilbert scheme of 0--dimensional length $r\geq 2$ subschemes of $\p^5$ (respectively of $S\subset\p^5$) and let $\Hilb^r_c\p^5\subset\Hilb^r\p^5$ be the open non-singular subscheme consisting of {\it curvilinear} length $r$ subschemes,
that is length $r$ subschemes which, locally around every point of their support, are contained in a smooth curve of $\p^5$. We can define $\Hilb^r_cS$ as the scheme--theoretic intersection between $\Hilb^rS$ and $\Hilb^r_c\p^5$ inside $\Hilb^r\p^5$. 

Let $\Al^r\p^5\subset\Hilb_c^r\p^5$ denote the subscheme consisting of
{\it aligned subschems of length $r$}, that is subschemes of length $r$ contained in a line. Finally,
the Hilbert scheme of length $r$ aligned subscheme of $S$, denoted by $\Al^rS$, is the scheme-theoretic intersection of  $\Al^r\p^5$ with $\Hilb^r_cS.$

The schemes $\Hilb_c^r\p^5$ and $\Al^r\p^5$ are smooth of dimension
$5r$ and $8+r$, respectively. Moreover, if $S\subset\p^5$ is smooth, then $\Hilb_c^rS$ is smooth of dimension $2r$. In particular, either $\Al^3S$ is empty or every irreducible components of $\Al^3S$ has dimension at least  
$$\dim(\Al^3\p^5)+\dim(\Hilb_c^3S)-\dim(\Hilb_c^3\p^5)=2,$$ which is therefore the {\it expected dimension} of $\Al^3 S$. So, for an irreducible projective surface $S\subset\p^5$, one might expect that, with few exceptions, the Hilbert scheme $\Al^3 S$ of trisecant lines  is  of pure dimension two.

There exists a natural morphism of schemes 
$$\axis: \Al^rS\to \mathbb G(1,5),$$
sending each length $r\geq 2$ aligned subscheme of $S$ to the unique line containing its support, that is to the multisecant line to $S$ determined by the subscheme of points (counted with multiplicity). 
Let $q:\mathcal L\to \mathbb G(1,5)$ be the universal family and let $p:\mathcal L\to\p^5$ be the tautological morphism. Then 
$$\Trisec(S):=p(q^{-1}(\axis(\Al^3S)))\subset\p^5$$
is called the {\it trisecant locus of} $S\subset\p^5$.
The previous count of parameters and analysis show: that the expected dimension of $\Trisec(S)$ is three; that every irreducible component of $\Trisec(S)$ has dimension at least two; that the irreducible components of dimension two of $\Trisec(S)$ are either $S$ (in this case $S$ is ruled by lines) or planes cutting $S$ along a plane curve of degree at least three, see \cite{Bauer}. 

By the Trisecant Lemma, see \cite[Proposition 1.4.3]{Umi}, a general secant line to an irreducible non degenerate surface $S\subset\p^5$ is not a trisecant line. So $\dim(\Al^3 S)\leq 3$ and  $\dim(\Trisec(S))\leq 4$. Very few examples of irreducible non degenerate surfaces $S\subset\p^5$ having $\dim(\Trisec(S))=4$ are known,  most of them are very singular (see  \cite{Rogora} for a description) but a complete classification is still lacking. The smooth irreducible non degenerate surfaces $S\subset\p^5$
with $\dim(\Trisec(S))\leq 2$ are classified in \cite{Bauer}. 

In our analysis we shall always consider the most general case $\dim(\Trisec(S))=3$. While the condition on the dimension is expected by the above parameter count, the generic smoothness of an irreducible component of $\Al^3S$ is
related to the dimension of the corresponding locus and to the tangential behaviour of $S\subset\p^5$ at the points of intersection of a general trisecant line by \cite[Proposition 4.3]{GrusonPeskine} (see also \cite[Section 1]{CC} and  \cite{Ran} for spectacular generalisations). We shall specialise  this general result to our setting.

\begin{prop}\label{Al3S} Let $S\subset \p^5$ be an irreducible projective surface and let $L\subset\p^5$ be a  proper trisecant line to $S$
such that $L\cap S=\{p_1, p_2, p_3\}$, with $p_1, p_2, p_3$ distinct smooth points of $S$, and with $[L]$ belonging  to an  irreducible component $A$ of $\Al^3S$ of dimension two. Then $\Al^3S$ is smooth at $[L]$ if and only if the tangent planes to $S$ at the points $p_i$'s are in general linear position, that is $T_{p_j}S\cap T_{p_k}S=\emptyset$ for any distinct $p_j, p_k\in  L\cap S$. In particular, if this condition holds at the point $[L]\in A$, then  $ A$ is generically smooth  and for a general $[L]\in A$ the tangent planes at the points $p_i$'s are in general linear position.

Moreover, in this case the irreducible component of $\Trisec(S)$ corresponding to $A$ has dimension 3 and through a general point $q$ of
this irreducible component there passes a finite number of trisecant lines to $S$, which are smooth points of the zero dimensional Hilbert scheme of  trisecant lines to $S$ passing through $q$.
\end{prop}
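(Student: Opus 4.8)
The plan is to compute the Zariski tangent space of $\Al^3S$ at $[L]$ by deformation theory, to compare its dimension with the expected value $2$, and to read off smoothness from a surjectivity statement which we then interpret geometrically; this is the announced specialisation of \cite[Proposition 4.3]{GrusonPeskine}.

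Starting from $\Al^3S=\Al^3\p^5\times_{\Hilb^3_c\p^5}\Hilb^3_cS$ and using that the tangent space of a fibre product is the fibre product of the tangent spaces, I would describe a first order deformation of $[L]$ in $\Al^3S$ as a pair $(v,(t_i)_{i=1,2,3})$: here $v\in H^0(L,N_{L/\p^5})$ deforms the line (its image in $\mathbb{G}(1,5)$ under $\axis$), while $t_i$ slides $p_i$ along it, and the two are subject to the requirement that the induced velocity $\dot p_i$, whose class modulo $T_{p_i}L$ is $v(p_i)$, be tangent to $S$. Since $L$ is a proper trisecant meeting $S$ transversally at the smooth points $p_i$, one has $T_{p_i}L\cap T_{p_i}S=0$, so each $t_i$ is forced by $v$ and, writing $\overline{T_{p_i}S}=(T_{p_i}S+T_{p_i}L)/T_{p_i}L$ for the image of the tangent space in the fibre $N_{L/\p^5}|_{p_i}$,
\[
T_{[L]}\Al^3S\;\cong\;\ker\Big(\Psi\colon H^0(L,N_{L/\p^5})\longrightarrow\bigoplus_{i=1}^{3}N_{L/\p^5}|_{p_i}\big/\overline{T_{p_i}S}\Big),
\]
with $\Psi$ the evaluation at $p_1,p_2,p_3$ followed by the three projections. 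Each summand has dimension $2$ and $h^0(N_{L/\p^5})=8$, so the kernel attains the expected dimension $2$ --- equivalently $\Al^3S$ is smooth at $[L]$ --- precisely when $\Psi$ is surjective.

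The key step is to turn surjectivity of $\Psi$ into a statement about the tangent planes. Write $V=\C^{6}$, $\p^5=\p(V)$, $L=\p(U)$ with $\dim U=2$, and $T_{p_i}S=\p(W_i)$ with $\dim W_i=3$; using $N_{L/\p^5}\cong\O_L(1)^{\oplus4}$ one identifies each fibre $N_{L/\p^5}|_{p_i}$ with $V/U$ and checks that, for sections of $\O_L(1)$, the values at the three distinct points $p_1,p_2,p_3$ satisfy a single linear relation with nonzero coefficients. Substituting this relation, a short computation shows that $\Psi$ is surjective if and only if the three subspaces $\overline{T_{p_i}S}\subset V/U$, i.e. the images $\overline{W_i}=(W_i+U)/U$, together span $V/U$; as $U\subseteq W_1+W_2+W_3$ this is the same as $W_1+W_2+W_3=V$, that is, the three embedded tangent planes span $\p^5$. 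This is the general linear position of the tangent planes, and it holds as soon as $T_{p_j}S\cap T_{p_k}S=\emptyset$ for some pair, since then already $W_j+W_k=V$. For the ``in particular'' clause, this condition is open on $\Al^3S$ and $A$ is irreducible, so, holding at one point $[L]\in A$, it holds on a dense open subset of $A$; this yields both the generic smoothness of $A$ and the general position of the tangent planes at a general $[L]\in A$.

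For the ``moreover'' part I would pull back the universal line $q\colon\mathcal L\to\mathbb{G}(1,5)$ along $\axis|_A$ to obtain a $\p^1$--bundle $\mathcal L_A$ over the generically smooth surface $A$, hence a threefold, whose image under the tautological morphism $\tilde p\colon\mathcal L_A\to\p^5$ is the component of $\Trisec(S)$ attached to $A$. It suffices to prove that $\tilde p$ is generically finite. I would do this by verifying that the differential of $\tilde p$ is injective at a general point $(\,[L],x)$ with $x\in L\setminus\{p_1,p_2,p_3\}$, using the smoothness of $A$ at $[L]$ together with the transversality of $L$ to $S$ at the $p_i$ and the fact that $x$ avoids them; a kernel vector would produce a one parameter family of trisecants through the fixed general point $\tilde p([L],x)$, which injectivity rules out. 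Injectivity of $d\tilde p$ gives at once that the image has dimension $3$, that a general fibre $\tilde p^{-1}(q)$ is finite, and that its points are reduced, i.e. smooth points of the zero dimensional Hilbert scheme of trisecants through $q$. The main obstacle is the middle paragraph: the faithful specialisation of \cite[Proposition 4.3]{GrusonPeskine}, namely the linear algebra on $\O_L(1)^{\oplus4}$ identifying surjectivity of $\Psi$ with the spanning of $\p^5$ by the $T_{p_i}S$; once this is in place the deformation theoretic setup and the concluding generic finiteness are comparatively routine.
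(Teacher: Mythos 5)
First, a point of comparison: the paper itself contains no proof of this proposition --- it is stated immediately after the sentence announcing that \cite[Proposition 4.3]{GrusonPeskine} ``will be specialised to our setting'', so your sketch is being measured against that intended specialisation rather than against a written argument. Your setup is exactly the right one: computing $T_{[L]}\Al^3S$ as the fibre product of tangent spaces, using transversality ($T_{p_i}L\cap T_{p_i}S=0$, which holds because the length-$3$ intersection is reduced) to eliminate the sliding parameters $t_i$, and identifying $T_{[L]}\Al^3S\cong\ker\Psi$ with the $8-6=2$ count is correct and is the content of the GP specialisation. The genuine gap is in your middle paragraph, and it is not cosmetic. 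Your (correct) linear algebra gives: $\Psi$ is surjective iff $\overline{W_1}+\overline{W_2}+\overline{W_3}=V/U$, i.e.\ iff $T_{p_1}S+T_{p_2}S+T_{p_3}S$ \emph{span} $\p^5$. You then declare ``this is the general linear position of the tangent planes''. But the proposition \emph{defines} general linear position as pairwise disjointness, $T_{p_j}S\cap T_{p_k}S=\emptyset$, which is strictly stronger than spanning: take $U=\langle e_1,e_2\rangle$, $u_1=e_1$, $u_2=e_2$, $u_3=e_1+e_2$, $W_1=\langle e_1,e_3,e_4\rangle$, $W_2=\langle e_2,e_3,e_5\rangle$, $W_3=\langle e_1+e_2,e_6,e_4+e_5\rangle$; all the incidence constraints $u_i\in W_i$, $W_i\cap U=\langle u_i\rangle$ hold, the three spaces span $\mathbb{C}^6$, yet $W_1\cap W_2=\langle e_3\rangle\neq 0$. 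So your argument proves the ``if'' direction (a single disjoint pair already forces $W_j+W_k=V$, as you note), and it proves ``smooth $\Rightarrow$ spanning'', but it does \emph{not} prove ``smooth $\Rightarrow$ pairwise disjoint''; worse, your own computation shows that the literal ``only if'' would fail for any surface realizing the configuration above along a trisecant lying in a two-dimensional component (and tangent dimension $2$ forces exactly such a component, since every component of $\Al^3S$ has dimension $\geq 2$). You cannot bridge this by more linear algebra: either the honest criterion is the maximal-rank/spanning one and the paper's parenthetical gloss is an over-strong paraphrase of \cite{GrusonPeskine}, or an extra geometric input is needed --- in any case the conflation must be flagged, not elided. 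It is harmless for the ``in particular'' clause, which only needs ``disjoint $\Rightarrow$ smooth'' plus openness of disjointness on $A$, and that part of your sketch is fine.

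The ``moreover'' paragraph has a second, smaller defect: your justification of the injectivity of $d\tilde p$ is circular (``a kernel vector would produce a one parameter family of trisecants through the fixed general point, which injectivity rules out'' assumes the conclusion, and a kernel vector of a differential does not integrate to a family in any case). What must actually be excluded is that $\tilde p(\mathcal L_A)$ is $2$-dimensional, and the clean route, consistent with the paper's own Subsection 2.1, is Bauer's description quoted there: a $2$-dimensional component of $\Trisec(S)$ is either $S$ itself --- impossible here, since a general $[L]\in A$ is a \emph{proper} trisecant, not contained in $S$ --- or a plane $P$ cutting $S$ along a plane curve of degree $\geq 3$, in which case each $T_{p_i}S$ meets $P$ in at least a line and any two such lines meet inside $P$, contradicting the general position of the tangent planes at a general $[L]\in A$. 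Hence the component of $\Trisec(S)$ attached to $A$ has dimension $3$ and $\tilde p$ is generically finite; in characteristic zero, generic smoothness then gives that the fibre over a general $q$ is finite and reduced, which is precisely the claim that the finitely many trisecants through $q$ are smooth points of the zero-dimensional Hilbert scheme of trisecants through $q$ --- no separate differential computation is needed.
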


This result and the previous analysis motivate the next definition.

\begin{defin}{\rm ({\bf Expected trisecant behaviour}) Let $S\subset\p^5$ be an irreducible non degenerate projective surface. If $\dim(\Trisec(S))=3$ and if every irreducible component of $\Al^3S$  of dimension two, whose trisecant lines describe an irreducible component of $\Trisec(S)$ of dimension three, is  generically reduced (and hence generically smooth), then $S\subset\p^5$ is said to have the {\it expected trisecant behaviour.}

A trisecant line to an irreducible surface $S\subset\p^5$ having the expected trisecant behaviour is said to be {\it general} if it is the general element of
an irreducible component of $\Al^3 S$ whose locus has dimension three.}
\end{defin}

We now start to study the consequences of this natural condition. 

\begin{lema}\label{split1} Let $S\subset\p^5$ be an irreducible non degenerate projective surface with the expected trisecant behaviour and with at most a finite number of singular points, let $L\subset \p^5$ be a general trisecant line and let $L'\subset\Bl_S\p^5$ denote the strict transform of $L$.
Then
$$N_{L'/\Bl_S\p^5}\simeq \Oo_{\p^1}\oplus \Oo_{\p^1}\oplus \Oo_{\p^1}(-1)\oplus \Oo_{\p^1}(-1).$$
 If $ \tilde T\subset\p^5$ denotes the unique irreducible component of $\Trisec(S)$ containing $L$ and if $\tilde T'$ denotes
the strict transform of $\tilde T$ on $\Bl_S\p^5$, then
$$N_{L'/ \tilde T'}\simeq \Oo_{\p^1}\oplus \Oo_{\p^1}.$$
Furthermore, if $\tilde T'$ is smooth along $L'$, then
$$N_{\tilde T'/\Bl_S\p^5_{|L'}}\simeq \Oo_{\p^1}(-1)\oplus \Oo_{\p^1}(-1).$$
\end{lema}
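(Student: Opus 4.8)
The plan is to realize $N_{L'/\Bl_S\p^5}$ as a subsheaf of the (pulled-back) normal bundle $N_{L/\p^5}\cong\Oo_{\p^1}(1)^{\oplus 4}$ and to pin down its splitting type on $\p^1\cong L'$ by two cohomological computations. First I would work locally at each of the three reduced points $p_i=L\cap S$, which for general $[L]$ are smooth points of $S$ through which $L$ passes transversally ($L\not\subset T_{p_i}S$). Choosing coordinates with $S=\{x_1=x_2=x_3=0\}$ and writing the standard chart of the blow-up of a codimension-three center, the differential $d\pi$ of $\pi:\Bl_S\p^5\to\p^5$ restricted to $L'$ has, in suitable frames of the two normal bundles, the local shape $\mathrm{diag}(t-t_i,\,t-t_i,\,1,\,1)$ near $q_i:=L'\cap E$ over $p_i$. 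Passing to normal bundles this produces an injection with torsion cokernel
\begin{equation*}
0\longrightarrow N_{L'/\Bl_S\p^5}\stackrel{\overline{d\pi}}{\longrightarrow} N_{L/\p^5}\cong\Oo_{\p^1}(1)^{\oplus 4}\longrightarrow \bigoplus_{i=1}^3 Q_{q_i}\longrightarrow 0,
\end{equation*}
where $Q_{q_i}=N_{L/\p^5}|_{p_i}/\,\mathrm{im}(T_{p_i}S)$ has length two. Since $\deg N_{L/\p^5}=4$ and the cokernel has total length $6$, this recovers $\deg N_{L'/\Bl_S\p^5}=-2$, and by Grothendieck's theorem the bundle is a sum of four line bundles of total degree $-2$.

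Then I would compute two invariants to fix the splitting. On $\p^1$ a rank-four degree $-2$ bundle with $h^1=0$ must be $\Oo^{2}\oplus\Oo(-1)^{2}$ or $\Oo(1)\oplus\Oo(-1)^{3}$, and these are separated by $h^0(\,\cdot\,(-1))$. For the first invariant I would identify $H^0(N_{L'/\Bl_S\p^5})$, namely the sections landing in $\mathrm{im}(T_{p_i}S)$ at each $q_i$, with the space of first-order deformations of $L$ keeping it trisecant to $S$, i.e. with $T_{[L]}\Al^3 S$. By the expected trisecant behaviour and Proposition~\ref{Al3S}, $\Al^3S$ is smooth of dimension two at the general $[L]$, so $h^0=2$; since $\chi(N_{L'/\Bl_S\p^5})=-2+4=2$, we get $h^1=0$. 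For the second invariant, twisting the displayed sequence by $\Oo_{\p^1}(-1)$ turns $\Oo(1)^{\oplus4}$ into $\Oo^{\oplus4}$ and identifies $h^0(N_{L'/\Bl_S\p^5}(-1))$ with the kernel of the evaluation $\CC^{4}\to\bigoplus_i Q_{q_i}$, whose $i$-th kernel is the image $\overline S_i$ of $T_{p_i}S$ in $N_{L/\p^5}|_{p_i}\cong\CC^4$. The general-linear-position property $T_{p_j}S\cap T_{p_k}S=\emptyset$ supplied by Proposition~\ref{Al3S} translates, by a direct dimension count in $\CC^6$, into $\overline S_j\cap\overline S_k=0$; hence the evaluation is injective and $h^0(N_{L'/\Bl_S\p^5}(-1))=0$. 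The two computations force $N_{L'/\Bl_S\p^5}\cong\Oo_{\p^1}^{2}\oplus\Oo_{\p^1}(-1)^{2}$, which is the first assertion.

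For the second assertion I would use that the component $T$ is dominated by the $\p^1$-bundle over (the smooth locus of) the two-dimensional family $A$ whose fibers are the strict transforms $L'$; thus for general $[L]$ the threefold $T'$ is smooth along $L'$, the line $L'$ moves in a two-dimensional family inside $T'$, and $N_{L'/T'}$ is a rank-two subbundle of $N_{L'/\Bl_S\p^5}$ with $h^0(N_{L'/T'})\geq 2$. On the other hand, being a subsheaf of $\Oo^2\oplus\Oo(-1)^2$, each summand of $N_{L'/T'}$ has nonpositive degree, so $h^0(N_{L'/T'})\leq h^0(N_{L'/\Bl_S\p^5})=2$. A rank-two bundle $\Oo(a)\oplus\Oo(b)$ with $a\geq b$, $a\leq 0$ and $h^0=2$ must be $\Oo^{2}$, giving $N_{L'/T'}\cong\Oo_{\p^1}\oplus\Oo_{\p^1}$. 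Finally, assuming $T'$ smooth along $L'$, the normal bundle sequence $0\to N_{L'/T'}\to N_{L'/\Bl_S\p^5}\to N_{T'/\Bl_S\p^5}|_{L'}\to 0$ reads $0\to\Oo^2\to\Oo^2\oplus\Oo(-1)^2\to N_{T'/\Bl_S\p^5}|_{L'}\to0$; the inclusion induces an isomorphism on $H^0$ (both $\cong\CC^2$), so $h^0(N_{T'/\Bl_S\p^5}|_{L'})=0$, and a rank-two quotient of degree $-2$ with no sections is $\Oo(-1)^2$, proving the third assertion.

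The main obstacle is the first invariant together with its deformation-theoretic bookkeeping: one must match, canonically and at the level of the explicit cokernel $Q_{q_i}$, the sections of $N_{L'/\Bl_S\p^5}$ with $T_{[L]}\Al^3S$ (so that the smoothness input of Proposition~\ref{Al3S} applies), verifying that the condition $\overline{d\pi}(q_i)\in\mathrm{im}(T_{p_i}S)$ is exactly first-order trisecancy. The accompanying general-position step—deducing $\overline S_j\cap\overline S_k=0$ in $N_{L/\p^5}|_{p_i}$ from $T_{p_j}S\cap T_{p_k}S=\emptyset$—is where the hypotheses (general $[L]$, the expected trisecant behaviour, and $\Sing(S)$ finite so that $L'$ lies in the smooth locus of $\Bl_S\p^5$) are genuinely used, and it must be done carefully since it is precisely what rules out the stray $\Oo(1)$ summand.
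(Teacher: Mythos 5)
Your proposal is correct, and it reproduces the paper's overall skeleton exactly: the splitting type of $N_{L'/\Bl_S\p^5}$ is pinned down by the three invariants $\deg=-2$, $h^0=2$, $h^0(\cdot(-1))=0$, and the second and third assertions are then extracted from the sequence \eqref{N1} by the same degree/section bookkeeping the paper uses (your arguments for those two parts coincide with the published ones almost verbatim). Where you genuinely diverge is in how the two cohomological inputs are obtained. The paper is synthetic and brief: $h^0(N_{L'/\Bl_S\p^5})=2$ is read off from the smoothness of the family of strict transforms of trisecants (i.e.\ from the expected trisecant behaviour plus Proposition \ref{Al3S}), and $h^0(N_{L'/\Bl_S\p^5}(-1))=0$ is deduced from the last clause of Proposition \ref{Al3S}, namely that only finitely many trisecants pass through a general point of the component of $\Trisec(S)$. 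You instead compute both from the explicit elementary-modification presentation $0\to N_{L'/\Bl_S\p^5}\to \Oo_{\p^1}(1)^{\oplus 4}\to \bigoplus_{i}Q_{q_i}\to 0$, replacing the finiteness-of-lines argument by the linear-algebra vanishing $\overline S_j\cap\overline S_k=0$, which you correctly derive from the pairwise disjointness of the tangent planes in Proposition \ref{Al3S} (two disjoint projective tangent planes have complementary affine cones in $\CC^6$, so adding the cone over $L$ to each yields subspaces meeting exactly along that cone, whence the images in $N_{L/\p^5}(-1)\simeq(V/W)\otimes\Oo$ meet trivially). This buys you something the paper leaves implicit: a canonical identification $H^0(N_{L'/\Bl_S\p^5})\simeq T_{[L]}\Al^3S$ (the paper tacitly transfers smoothness from $\Al^3S$ to the Hilbert scheme of strict transforms), a transparent role for the general-position hypothesis, and $h^1=0$ as a by-product; the cost is the local computation at the three points, which you rightly flag as the delicate step. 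One small inaccuracy, ultimately harmless: domination of $T'$ by the $\p^1$-bundle over the smooth locus of the two-dimensional family does not by itself give smoothness of $T'$ along \emph{all} of a general $L'$ (which is precisely why the paper retains smoothness of $T'$ along $L'$ as a hypothesis of the third assertion rather than deriving it); for the middle assertion you only need that $N_{L'/T'}$ is a rank-two subsheaf of $N_{L'/\Bl_S\p^5}$, hence locally free, and this follows since a general $L'$ is not contained in $\Sing(T')$ — your argument goes through once phrased this way, exactly as the paper's does.
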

\begin{proof}  Let $S_{\reg}=S\setminus\Sing(S)$ be the locus of smooth points of an irreducible non degenerate surface $S\subset\p^5$ and let $\pi:\Bl_S\p^5\to\p^5$ be the blow-up of $\p^5$ along $S$. Then $\pi\inv(\p^5\setminus\Sing(S))$ is a smooth variety. 
Since $\Sing(S)$ is zero dimensional,  a general $[L]\in\Al^3S$  will cut
$S$ in three smooth distinct points and $L'$ will be contained in the smooth locus of $\Bl_S\p^5$. In particular, the normal bundle $N_{L'/\Bl_S\p^5}$ is locally free
of rank four .

The strict transforms of general trisecant lines to $S$ determine a proper family of dimension two of smooth rational curves on $\Bl_S\p^5$ and the curve $L'$ represents a smooth point of this family by hypothesis,  yielding 
$N_{L'/\Bl_S\p^5}\simeq \Oo_{\p^1}\oplus \Oo_{\p^1}\oplus \Oo_{\p^1}(-1)\oplus \Oo_{\p^1}(-1)$.
Indeed,  $\deg(N_{L'/\Bl_S\p^5})=-2$ by Adjunction Formula, $h^0(N_{L'/\Bl_S\p^5})=2$ by the generic smoothness hypothesis of the irreducible component to which $L'$ belongs, and $h^0(N_{L'/\Bl_S\p^5}(-1))=0$ since through a general point of $\tilde T'$ there passes a finite number of curves of the family by the last part of Proposition \ref{Al3S}.   Then the exact sequence
\begin{equation}\label{N1}
0\to N_{L'/ \tilde T'}\to N_{L'/\Bl_S\p^5}\to N_{\tilde T'/\Bl_S\p^5_{|L'}}
\end{equation}
assures that $N_{L'/ \tilde T'}$ is torsion free and hence locally free of rank 2. Moreover, letting $N_{L'/\tilde T'}\simeq \Oo_{\p^1}(a_1)\oplus \Oo_{\p^1}(a_2)$, we deduce $a_i\leq 0$ for $i=1,2$. Since the curve $L'$ moves in a family of dimension two inside $\tilde T'$, we have $h^0(N_{L'/\tilde T'})\geq 2$ and hence $a_1=a_2=0$.
Finally, if
 $\tilde T'$ is smooth along $L'$, then the exact sequence \eqref{N1} is also exact on the right and 
$N_{\tilde T'/\Bl_S\p^5_{|L'}}\simeq \Oo_{\p^1}(-1)\oplus\Oo_{\p^1}(-1)$.
\end{proof}

\begin{Remark}\label{gentris} We are interested in studying the birational properties of  smooth cubic hypersurfaces $X\subset\p^5$ passing through an irreducible projective surface $S\subset\p^5$ having the expected trisecant behaviour and with at most a finite number of singular points. Retain the notation of Lemma \ref{split1} and suppose $L\subset X\subset\p^5$ is a general proper trisecant line to $S$ contained in $X$. Since $N_{\Bl_SX/\Bl_S\p^5_{|L'}}\simeq\Oo_{\p^1}$, since  $N_{L'/\Bl_S\p^5}\simeq \Oo_{\p^1}\oplus \Oo_{\p^1}\oplus \Oo_{\p^1}(-1)\oplus \Oo_{\p^1}(-1)$ and since we have the exact sequence:
$$0\to N_{L'/\Bl_SX}\to N_{L'/\Bl_S\p^5}\to N_{\Bl_SX/\Bl_S\p^5_{|L'}}\to 0,$$
we deduce that either $N_{L'/\Bl_SX}\simeq \Oo_{\p^1}\oplus \Oo_{\p^1}(-1)\oplus \Oo_{\p^1}(-1)$ or  $N_{L'/\Bl_SX}\simeq \Oo_{\p^1}\oplus \Oo_{\p^1}\oplus \Oo_{\p^1}(-2)$.
If the  last splitting holds, then:  either the family of strict transforms of trisecant lines to $S$ contained in $X$ to which $L'$ belongs  is two dimensional (that is  $\Trisec(S)\subseteq X$) and is generically smooth;
 or  this family  is one dimensional but not generically reduced.  If $X\subset\p^5$ does not contain $\Trisec(S)$ and if $S$ has the expected trisecant behaviour, then the family of trisecant lines to $S$ contained in $X$ is one dimensional and the corresponding  locus  has dimension two.

Thus when  $X$ is {\it sufficiently general},  when $S$ has the expected trisecant behaviour and at most a finite number of singular points,  the locus of trisecant lines to $S$ contained in $X$ is of pure dimension two and one expects that the one dimensional families of trisecant lines to $S$ contained in $X$ are generically smooth as subschemes of the corresponding parameter space.

The previous natural expectation/hypothesis translates into the following conditions, letting notation be as above:
$$N_{L'/\Bl_SX}\simeq \Oo_{\p^1}\oplus \Oo_{\p^1}(-1)\oplus \Oo_{\p^1}(-1).$$
If $T\subset X$ denotes the unique two dimensional irreducible component of the locus of trisecant lines to $S$ contained in $X$ to which $L$ belongs, then
$$N_{L'/T'}\simeq \Oo_{\p^1}.$$ Furthermore, if $T'$ is smooth along $L'$, then
\begin{equation}\label{splitT'}
N_{T'/\Bl_SX_{|L'}}\simeq \Oo_{\p^1}(-1)\oplus \Oo_{\p^1}(-1).
\end{equation}

Condition \eqref{splitT'} is crucial. Indeed, as we shall see in the next section, it essentially says that $T'$ can be flopped producing another four dimensional variety birational to $\Bl_SX$ and hence to $X$ in a very natural way. 

 If an irreducible projective surface $S\subset\p^5$ has the expected trisecant behaviour and if $S$ satisfies condition $\mathcal K_3$, then, for a general cubic through $S$, the expected splittings listed above hold for a general proper trisecant line to $S$ contained in the cubic, see the proof of Theorem \ref{trisflop}. There are also many other examples of different flavour for which the above conditions naturally hold and which naturally lead to flops of the trisecant locus contained in the cubic fourfold.

\end{Remark}

\subsection{Assumptions and main definitions}\label{assdef}

\begin{Assumption} \label{assumption1} 
  Suppose we have a smooth irreducible projective surface (the treatment can be  extended to surfaces with at most a finite number of singular points)   $S\subset\p^5$, scheme-theoretically defined by cubic hypersurfaces
and such that the associated rational map
$$\phi:\p^5\map\p(H^0(\I_S(3))=\p^N$$
is birational onto the closure of its image $Z=\overline{\phi(\p^5)}\subset\p^N.$ 
\end{Assumption}
Then
the restriction of $\phi$ to a general $X\in|H^0(\I_S(3))|$ 
induces a birational map
$$\phi:X\map Y\subset\p^{N-1}$$
with $Y$ the corresponding hyperplane section of $Z\subset\p^N$. 
On $X'=\Bl_SX$ we have
$$-K_{X'}=\lambda^*(-K_X)-E=3H'-E$$
and our hypothesis on the defining equations of $S$ and on the birational map $\phi:\p^5\map Z$ can be reformulated by saying that
 $-K_{X'}$ is a big divisor generated by its global sections. In particular, $-K_{X'}$ is nef and big.

The induced morphism 
$$\tilde\phi:\Bl_SX\to Y$$
 is  a small contraction (with very few exceptions).
Indeed, the base locus scheme of $\phi$ is the surface $S$ and $\phi$ contracts any irreducible (rational) curve $C\subset X$ of degree $e\geq 1$ which is $3e$--secant to $S$, i.e. such that $\length(C\cap S)=3e$ ({\it proper} $3e$--secant curve to $S$). Let us indicate
by $T\subset X$ the closure of the locus of proper $3e$-secant curves  to $S$ contained in $X$. If $L'\subset X'$ is the strict transform of a proper trisecant line to $S$ contained in $X$, let $[L']$ denote its numerical class in $N_1(X')$.

The strict transform $C'\subset X'$ of a proper $3e$--secant curve $C\subset X$ to $S$ of degree $e\geq 1$ satisfies $[C']=[eL']$.
Therefore on $X'=\Bl_SX$ we have 
 $$K_{X'}\cdot C'=(E-3H')\cdot C'=3e-3e=0$$ 
 for  curves $C'\subset X'$ as above.

\begin{defin}{\rm ({\bf Trisecant flop}) Let notation and assumptions be as above. If $\tilde\phi:X'\to Y$ is a small contraction of  curves in $\mathbb R[L']$,
then it is called a {\it trisecant  flop contraction}.
If $\tilde\phi: X'\to Y$ is a trisecant  flop contraction and if there exists a flop  $\tilde\psi:W'\to Y$ of $\tilde\phi$ with $W'$ a projective fourfold, then the resulting birational
map $\tau:X'\map W'$ will be called the {\it trisecant flop} (of $\tilde\phi:X'\to Y$).} 
\end{defin}

Let us remark that, by definition, if $\tilde\phi:X'\to Y$ is a trisecant flop contraction, then the exceptional locus of $\tilde\phi:X'\to Y$ has dimension at most two and the irreducible components
of dimension two are covered by proper $3e$--secant (rational) curves (in most cases they are ruled by these curves). By Zariski's Main Theorem, a positive dimensional fiber is connected so that a general positive dimensional fiber is smooth and irreducible.

During our study of birational maps  $\phi:\p^5\map Z$ of the type described above, we constructed many surfaces $S\subset\p^5$ inducing trisecant flop contractions on a general cubic fourfold $X$ through $S$.
For example surfaces satisfying condition $\mathcal K_3$ but not only (see Table~\ref{tabella}).

\subsection{Existence of the trisecant flop}
For simplicity we shall now assume as above that $S$ is smooth.
As always, let $\lambda: X'=\Bl_SX\to X$ be the blow-up
of $X$ along $S$, let $E\subset X'$ be the exceptional divisor and let $H'=\lambda^*(H)$, where  $H\subset X$ is a hyperplane section.

The results in Subsection \ref{hilbtris}, in  Remarks \ref{linsyz} and \ref{gentris} suggest that, under some mild assumptions,  trisecant flop contractions might exist.

We shall now construct explicitly a flop of the two dimensional irreducible components of $T$ ruled by trisecant lines to $S$ via $\phi$ as soon as  $S\subset\p^5$ has the expected trisecant behaviour and $\tilde \phi$ is a small contraction. When these loci exhaust the exceptional locus of a trisecant  flop contraction we shall obtain  a  {\it trisecant flop} of $\tilde\phi:X'\to Y$. Flops of this kind have been also considered in \cite{LLW} in arbitrary dimension under the stronger assumption that the splitting \eqref{splitT'} holds for every line of the ruling of $T'$.

\begin{thm} {\rm ({\bf Trisecant flop})}\label{trisflop} Let notation be as above, suppose that $S\subset\p^5$  satisfies Assumption~\ref{assumption1} and that it has the expected trisecant behaviour. If $T'\subset X'$ denotes the  exceptional locus of the associated
small contraction 
$$\tilde\phi:X'=\Bl_SX\to Y,$$ then  
any irreducible smooth surface $\overline T\subseteq T'$, which is ruled via $\tilde\phi$ by the strict transforms of trisecant lines to $S$ (that is through a general point of $\overline T'$ there passes a unique curve of this kind),  can be flopped to  produce a small contraction  $\tilde\psi:W'\to Y$ with $W'$ a smooth projective fourfold. 

In particular, under the previous assumptions, if $\tilde\phi:X'\to Y$ is a trisecant flop contraction and if $T'\subset X'$ is a  smooth irreducible surface ruled via $\tilde\phi$ by  trisecant  lines, then the trisecant flop  $\tau:X'\map W'$ exists.
\end{thm}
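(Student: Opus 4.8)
The plan is to construct the flop locally around a general ruling line and then glue, exploiting the splitting \eqref{splitT'} established for general trisecant lines. First I would reduce to the key geometric input: given the expected trisecant behaviour, Lemma \ref{split1} together with the discussion in Remark \ref{gentris} shows that for a general proper trisecant line $L\subset X$ one has $N_{L'/X'}\simeq \Oo_{\p^1}\oplus\Oo_{\p^1}(-1)\oplus\Oo_{\p^1}(-1)$, with $N_{L'/\overline T}\simeq\Oo_{\p^1}$ and the normal bundle of $\overline T$ in $X'$ restricting to $\Oo_{\p^1}(-1)\oplus\Oo_{\p^1}(-1)$ along $L'$. The idea is that this is precisely the numerical profile of a standard Mukai/Atiyah flop: a $\p^1$-bundle surface sitting inside a smooth fourfold with normal bundle $\Oo_{\p^1}(-1)^{\oplus 2}$ on each fibre admits a blow-up/blow-down flop, and $K_{X'}$ meets every such line in degree zero, so the contraction is crepant.

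The concrete construction follows the scheme in diagram \eqref{diag0}. I would blow up $X'$ along the smooth surface $\overline T$, obtaining $\sigma:V=\Bl_{\overline T}X'\to X'$ with exceptional divisor $\Ee$. The restriction of $N_{\overline T/X'}$ to each line $L'$ being $\Oo(-1)\oplus\Oo(-1)$ means $\Ee$ is (at least along the general ruling) a $\p^1\times\p^1$-bundle over the base curve of the ruling, and I would check that $\Ee$ admits a second $\p^1$-fibration—the one contracting the \emph{other} family of lines—which can be blown down by $\omega:V\to W'$. Concretely, $\Ee$ is the projectivization $\p(N^*_{\overline T/X'})$, and the two rulings of each $\p^1\times\p^1$ fibre correspond to the two $\Oo(-1)$ summands; $\sigma$ contracts one ruling and $\omega$ the other. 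The verification that $\omega$ exists as a morphism to a \emph{smooth} fourfold $W'$ is where I would invoke the local analytic model of the Atiyah flop (the smoothness of $W'$ follows because the contracted $\p^1$'s have normal bundle $\Oo(-1)^{\oplus 2}$ inside $V$ restricted appropriately), and then glue over the two-dimensional base of the ruling of $\overline T$. The crepancy $\sigma^*K_{X'}=\omega^*K_{W'}$ is the discrepancy bookkeeping: since $K_{V}=\sigma^*K_{X'}+\Ee$ and the analogous formula holds for $\omega$, and since the two exceptional divisors coincide as the single divisor $\Ee$, one gets equality of pullbacks, hence $\tilde\psi:W'\to Y$ is again crepant, i.e. a small flop contraction.

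The main obstacle is global: I can build the flop fibre-by-fibre over the \emph{general} line of the ruling, but to obtain a genuine projective manifold $W'$ I must control what happens over the special lines of $\overline T$—those where the splitting \eqref{splitT'} might degenerate or where $\overline T$ meets the other components of $T'$ or the singularities of $S$ enter. The honest way to handle this is to argue that the second contraction $\omega:V\to W'$ is itself an extremal contraction of a $K_V$-negative (or $K_V$-trivial, relative to $Y$) ray and to apply the existence theory for such contractions: since $V\to Y$ factors both blow-downs and $\rho(V/Y)=2$, there are exactly two extremal rays, one realized by $\sigma$; the other ray, being spanned by the classes $[\ell]$ of the alternate ruling lines with $K_V\cdot\ell$ computed from the crepancy, yields a contraction onto a normal $Y'$, which one identifies with $Y$, and over which $W'$ is smooth by the local model. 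I would therefore phrase the argument so that smoothness of $W'$ is a statement checked locally in the étale or analytic topology using the standard Atiyah-flop chart, while projectivity and the morphism structure come from the relative cone theorem over $Y$. Finally, the ``in particular'' clause is immediate: when the components of $T'$ exhaust the exceptional locus and each is a smooth ruled surface of the required type, I flop them one at a time (they are pairwise disjoint away from a locus of codimension $\ge 2$, so the flops commute), and composing the resulting birational maps gives the trisecant flop $\tau:X'\map W'$, with $\tilde\psi$ a small contraction by construction and $\rho(W'/Y)=1$ guaranteeing its uniqueness as recorded in the definition.
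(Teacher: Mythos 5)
Your construction agrees with the paper's up to the blow-up $\sigma:X''=\Bl_{\overline T}X'\to X'$ (your $V$) and the two-ruling picture on the exceptional divisor $E'=\p(N^*_{\overline T/X'})$, and your crepancy bookkeeping is correct; but there is a genuine gap exactly at the point you flag yourself: the special fibres of the ruling of $\overline T$, where the splitting \eqref{splitT'} may a priori degenerate (say to $\Oo_{\p^1}\oplus\Oo_{\p^1}(-2)$). Your proposed fix --- relative cone theorem over $Y$ plus smoothness of $W'$ ``by the local model'' --- is circular there, because the Atiyah-flop chart exists only where the conormal restricts as $\Oo_{\p^1}(1)^{\oplus 2}$, which is precisely what is unknown at the special lines. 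Worse, if a degeneration did occur over a line $L'_0$, the fibre $\sigma\inv(L'_0)$ would be an $\FF_2$, the limit of the second-ruling curves would be the \emph{reducible} cycle $s_0+f$ ($s_0$ the negative section, $f$ a $\sigma$-fibre), and the second extremal ray of $\overline{NE}(X''/Y)$ could then be spanned by the $K$-trivial class $[s_0]$ rather than by the class of a general second-ruling line; the cone and contraction theorems you invoke apply to $K$-negative rays, so even the \emph{existence} of your second contraction $\omega$ is not secured by this argument, let alone the smoothness of $W'$ at the images of the special fibres.

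The paper closes this gap by running Mori theory not on the fourfold but on the exceptional divisor $E'$ itself, which is a \emph{smooth projective threefold}: the second-ruling curves $L''$ cover $E'$ and generate an extremal ray, so by \cite{Mori} there is a contraction $\omega:E'\to R'$ onto a smooth surface. Since $N^*_{E'/X''}$ has degree $1$ on the extremal class, every irreducible component of a positive-dimensional fibre of $\omega$ (its class being a positive multiple of the ray generator) has positive integral $N^*$-degree, while the total degree along a fibre is $1$; hence every fibre is an irreducible reduced smooth rational curve with $N^*_{E'/X''}$ restricting to $\Oo_{\p^1}(1)$. In particular the degenerations you worried about simply do not occur, and $E'\to R'$ is a $\p^1$-bundle. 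Smoothness of $W'$ is then obtained globally, not chart by chart, from the blow-down criterion of \cite{Artin,Nakano,FNakano}: a divisor in a smooth fourfold that is a $\p^1$-bundle over a smooth surface with fibrewise conormal $\Oo_{\p^1}(1)$ contracts to a smooth fourfold, of which $X''$ is the blow-up along $R'$. The morphism $\tilde\psi:W'\to Y$ then comes from the nef and big system $|-K_{W'}|$, which contracts the curves $L''$ to $\overline C$. If you wish to keep your cone-theoretic framing on $X''$ over $Y$, you must first prove this non-degeneration statement; the divisorial Mori argument is exactly the missing ingredient. (A smaller point: your last step asserts that distinct components of $T'$ interact mildly enough for the flops to commute, which you do not justify; the paper avoids the issue by reducing at the outset, without loss of generality, to a single smooth irreducible component $\overline T=T'$.)
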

\begin{proof} First we shall first prove the second part,  that is suppose that   $\overline T=T'$ is a smooth irreducible surface
ruled via $\tilde\phi$ by trisecant lines and such that $\tilde\phi(T')=\overline C\subset Y$ is a  curve. At the end we shall consider the general case in which $T'$ is a finite union of such surfaces.  The general  fiber of $\tilde\phi:T'\to \overline C$ is smooth and irreducible so that $\overline C$ generically coincides, as a scheme, with the parameter space of trisecant lines to $S$ contained in $X$. In particular this parameter space is generically smooth of dimension one. Let $L'\subset T'$ be a general fiber of the restriction of $\tilde\phi$  to $T'$. By Lemma~\ref{split1} (see also Remark  \ref{gentris}):
\begin{equation}\label{sflop} 
N^*_{T'/X'_{|L'}}\simeq \Oo_{\p^1}(1) \oplus \Oo_{\p^1}(1).
\end{equation}

Let $\sigma:X''=\Bl_{T'}X'\to X'$ be the blow-up of $X'$ along $T'$, let $E'\subset X''$ be the exceptional divisor and let $C_1\simeq \p^1\subset E'$ be a positive dimensional fiber of $\sigma$.
By \eqref{sflop} we deduce that  $\Sigma_{L'}=\sigma\inv(L')\simeq \p^1\times \p^1$ and we can suppose that the restriction of $\sigma$ to 
$\Sigma_{L'}$ is identified with the projection onto the first factor of $\p^1\times \p^1$. Since $\Sigma_{L'}$ is also the general fiber of $\tilde\phi\circ\sigma_{|E'}:E'\to \overline C,$ we have $N_{\Sigma_{L'}/E'}\simeq\mathcal O_{\Sigma_{L'}}$. Let $C_2\simeq \p^1\subset E'$ be a fiber  of the projection onto the second factor of $\Sigma_{L'}.$  Since $N_{C_2/\Sigma_{L'}}\simeq \Oo_{\p^1}$ and since $N^*_{E'/X''_{|C_2}}\simeq\Oo_{\p^1}(1)$ by \eqref{sflop},  the exact sequence:
$$0\to N_{C_2/\Sigma_{L'}}\to N_{C_2/E'}\to N_{\Sigma_{L'}/E'_{|L'}}\simeq \Oo_{\p^1}\to 0$$
yields 
\begin{equation}\label{sc2}
N_{C_2/E'}\simeq \Oo_{\p^1} \oplus \Oo_{\p^1}
\end{equation} 
and hence $-K_{E'}\cdot C_2=2$ by Adjunction Formula. From $\sigma(C_2)=L'$ and from projection formula we get  $\sigma^*(H')\cdot C_2=1$. 
The Hilbert scheme of curves contained in the smooth projective variety  $E'$ is smooth and of dimension 2 at the point $[C_2]$ corresponding to $C_2\subset E'$ by \eqref{sc2}. So $[C_2]$ belongs to a unique irreducible component $\mathcal C$ of the Hilbert scheme with $\dim(\mathcal C)=2$. Since $\sigma^*(H')\cdot C_2=1$ and since $-K_{E'}\cdot C_2=2$, the possible deformations of $C_2$ inside $E'$ are: either irreducible and isomorphic to $\p^1$; or they consists of two distinct smooth irreducible rational curves $F_1, F_2\subset E'$ intersecting in one point and such that $N_{F_i/E'}\simeq \Oo_{\p^1}\oplus\Oo_{\p^1}(-1)$, see \cite[Sections 3.24, 3.25, 3.25.2]{Mori}. Since $1=\sigma^*(H')\cdot (F_1+F_2)$, we can suppose $\sigma^*(H')\cdot F_1=1$ (yielding $E'\cdot F_1=-1$) and $\sigma^*(H')\cdot F_2=0$. The last condition means that either $F_2$ is contracted to a point by $\sigma$ and hence $E'\cdot F_2=-1$ by \eqref{sflop} or that $\sigma(F_2)$ is a curve contracted to a point by $\lambda$, that is a positive dimensional fiber of the blow-up $\lambda$. The first case is excluded because  it would imply $E'\cdot (F_1+F_2)=-2$, contradicting  $E'\cdot C_2=-1$. If $\sigma(F_2)\simeq\p^1\subset T'$, then it is a $(-1)$ curve on $T'$. From  $(3H'-E)\cdot \sigma(F_2)=-E\cdot \sigma(F_2)=1$ we would deduce that $\overline C=\tilde\phi(\sigma(F_2))$ is a line and that the projection via $\lambda$ of all the fibers of $T'\to \overline C$
pass through the smooth point $\lambda(\sigma(F_2))=p\in T\cap S$. Then $T\subset X$ would be a plane because it would coincide with its tangent plane at $p$. The intersection  $T\cap S$ would contain a cubic curve since $S$ is scheme theoretically defined by cubics and every line contained in $T$ would be trisecant to $S$. The plane $T$ would be contracted to a point by $\phi$ (and a fortiori by $\tilde\phi$) so that $T$ would not be ruled by trisecant lines to $S$. In conclusion the deformations of $C_2$ inside $E'$ are all smooth, irreducible and isomorphic to $\p^1$, parametrized by a smooth projective surface (the splitting \eqref{sc2} necessarily holds for all the deformations of $C_2$)
and the locus of these curves is $E'$. The extremal ray $\mathbb R_+[C_2]$ determines a contraction $\omega': E'\to R'$ with
$R'$ a smooth surface and such that every fiber of $\omega'$ is isomorphic to $\p^1$, see \cite[Theorem 3.5.1]{Mori}.

By the above analysis the surface $R'$ is ruled by the curves $L''=\omega'(\Sigma_{L'})$. 
There exists a morphism $\omega:X''\to W'$ with $W'$ a smooth irreducible projective fourfold, which is the blow-up of $W'$ along the
smooth surface $R'$ with  exceptional divisor  $E'$ and whose restriction to $E'$ is $\omega'$, see for example \cite{ Nakano, FNakano} and also \cite{Artin}.

The smooth rational curves $L''\subset R'$ are disjoint and contracted to $\overline C$ by the nef, big and base point free linear system $|-K_{W'}|$, yielding  a morphism $\tilde\psi:W'\to Y$ such that $\overline C=\tilde\psi(R')$ and such that the surface $R'$ is ruled by $\tilde\psi:R'\to \overline C$.

Suppose now that $T'=T_1\cup\cdots\cup T_r$, $r\geq 2$,  with  $T_i$ a smooth irreducible projective surface  ruled via $\tilde\phi$ by trisecant lines to $S$. After applying the previous construction to $T_1$ we produce $W_1$ and we have changed $T_1$ with $R'_1$. The strict transforms $T'_j\subset W_1$ of $T_j$, $j=2,\ldots r$, are smooth irreducible surfaces which are ruled by the strict transform of trisecant lines to $S$ and such that the restriction of $N^*_{T'_j/W_1}$ to a general fiber of $\tilde\psi:T'_j\to C_j$ satisfies \eqref{sflop}. Then we can flop $T'_2$ and produce a smooth fourfold $W_2$. After $r$ steps we get a smooth fourfold $W_r$ birational to $X'$ in which the smooth irreducible ruled surfaces $T_j$ have been changed with the corresponding $R'_j$.  The birational map $X'\map W_r$ is an isomorphism in codimension 1 but not an isomorphism and it has been factorized into a sequence of elementary flops (see also \cite{HMK} for the general program of factorization of birational maps into elementary links according to Sarkisov). 
\end{proof}
We now state a useful corollary, helpful for our applications and showing that the  phenomenon described above really occurs.
\begin{cor}\label{K3flop} Let $S\subset\p^5$ be a smooth surface satisfying Assumption~\ref{assumption1}, condition $\mathcal K_3$ and having the expected trisecant behaviour.
If $X\subset\p^5$ is a cubic hypersurface through $S$ not containing any plane spanned by cubic curves on $S$ and if $T'\subset X'$ denotes the  exceptional locus of the associated
small contraction 
$\tilde\phi:X'=\Bl_SX\to Y,$
then there exists the trisecant flop $\tilde\psi:W'\to Y$ of any irreducible component of $T'$.
\end{cor}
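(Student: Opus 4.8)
The plan is to reduce everything to Theorem~\ref{trisflop} by showing that its hypotheses are forced by condition $\mathcal{K}_3$ together with the assumption that $X$ contains no plane spanned by a cubic curve on $S$. Since $S$ already satisfies Assumption~1 and has the expected trisecant behaviour, the only points to settle are: (a) that $\tilde\phi:X'\to Y$ is genuinely a \emph{trisecant} flop contraction, and (b) that every irreducible component of its exceptional locus $T'$ is a smooth surface ruled via $\tilde\phi$ by strict transforms of trisecant lines.

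For (a) I would argue as in Remark~\ref{linsyz}. By Proposition~\ref{verme}, condition $\mathcal{K}_3$ forces every positive-dimensional fiber of $\phi:\p^5\map Z$ to be a linear space $\p^s$ meeting $S$ in a cubic hypersurface when $s>0$, whence $s\in\{1,2\}$: the case $s=1$ produces proper trisecant lines, while $s=2$ occurs only for a plane $F\cong\p^2$ spanned by a cubic curve $\Gamma\subset S$. Restricting to $X$, such a plane meets the cubic $X$ in a plane cubic curve; but $\Gamma\subset S\subset X$ together with the no-plane hypothesis $F\not\subset X$ forces $F\cap X=\Gamma\subset S$ (two plane cubics in $F$, one contained in the other). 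Hence these fibers contribute nothing outside the base locus and are absorbed into the exceptional divisor $E$ after blowing up $S$. Consequently the only positive-dimensional fibers of $\tilde\phi$ are strict transforms $L'$ of trisecant lines $L\subset X$, all of class $[L']$, so $\tilde\phi$ is a trisecant flop contraction whose exceptional locus $T'$ is ruled by trisecants.

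For (b) I would combine the expected trisecant behaviour with Lemma~\ref{split1} and Proposition~\ref{Al3S}, exactly in the spirit of the last paragraph of Remark~\ref{gentris}: since $\Al^3S$ is generically smooth of pure dimension two, a general ruling $L'$ of $T'$ is a smooth point of the parametrizing family, the conormal bundle splits as in \eqref{splitT'}, and $T'$ is smooth and trisecant-ruled along it. With (a) and (b) established, the ``in particular'' clause of Theorem~\ref{trisflop} applies verbatim and produces the flop $\tilde\psi:W'\to Y$ with $W'$ a smooth projective fourfold.

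The main obstacle is step (b): upgrading the \emph{generic} good behaviour of $\Al^3S$ and of the splitting \eqref{splitT'} to a statement valid on \emph{every} irreducible component of $T'$, as required by Theorem~\ref{trisflop}. This is where the linearity of the fibers of $\phi$ forced by $\mathcal{K}_3$ is decisive: it exhibits $T'$ as a union of surfaces swept out by the linear fibers $L'$, which rigidifies the ruling and is what one must exploit to propagate the smoothness and trisecant-ruling from a general ruling to each component, ruling out degenerate components on which \eqref{splitT'} could fail.
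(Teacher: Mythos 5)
Your proposal is essentially the paper's own argument: the corollary is stated there without a separate proof, being the immediate combination of Proposition~\ref{verme} and Remark~\ref{linsyz} (under condition $\mathcal K_3$ and the no-plane hypothesis, every positive-dimensional fiber of $\tilde\phi$ is the strict transform of a proper trisecant line, so the exceptional locus is trisecant-ruled) with the ``in particular'' clause of Theorem~\ref{trisflop}, exactly as in your steps (a) and (b). The obstacle you flag in step (b) --- upgrading the generic smoothness and the splitting \eqref{splitT'} to \emph{every} irreducible component of $T'$ --- is real but is not resolved in the paper either: smoothness of the components is carried as a hypothesis in Theorem~\ref{trisflop} that the corollary silently absorbs, and in the concrete applications (e.g.\ Subsection~\ref{s38}, where $T$ has six nodes but $T'$ is checked to be a smooth scroll) it is verified directly.
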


\begin{Remark}\label{hypothesis}  Obviously,  one might only assume that $|-K_{X'}|=|3H'-E|$ is generated by global sections and big (or only nef and big but not generated by global sections) without requiring that the trisecant flop contraction is necessarily given by $|-K_{X'}|$. It is not difficult to see that in any case, for some $m\geq 1$, the linear system  $|-mK_{X'}|$ gives a trisecant flop contraction. We avoided this more general approach to simplify the exposition but  there are examples of trisecant flops appearing also  in more general settings, see  Subsection 3.6 of the first arXiv version of this paper and also example (xv)  of Table~\ref{tabella}.
\end{Remark}

\subsection{Trisecant flop and congruences of \texorpdfstring{$(3e-1)$}{(3e-1)}-secant rational curves of degree \texorpdfstring{$e\geq 2$}{e>=2}} 
The aim of this section is to relate the trisecant flop to (congruences of) $(3e-1)$--secant curves to $S$. We start by an easy but
very useful result.

\begin{prop} {\rm ({\bf Extremal ray generated by $(3e-1)$-secant curves})}\label{extremalray} Let notation  be as above. Suppose that  $S\subset\p^5$
satisfies Assumption~\ref{assumption1} and that there exists the trisecant flop $\tilde\psi:W'\to Y$ of a  trisecant flop contraction $\tilde\phi:X'=\Bl_SX\to Y$
with $X$ a general cubic fourfold through $S$. 

If $C'\subset X'$ is the strict transform on $X'$ of a $(3e-1)$-secant curve to $S$ of degree $e$ contained in $X$, then the strict transform $\overline C'$ of $C'$ on $W'$ generates an extremal ray on $W'$.
\end{prop}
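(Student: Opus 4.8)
The plan is to run the two-ray game on $W'$. Since $X$ is a smooth cubic fourfold, $\Pic(X)=\mathbb Z$, so $\rho(X')=\rho(\Bl_SX)=2$; as a flop preserves the Picard number, $\rho(W')=2$ and $\overline{NE}(W')$ is a closed two-dimensional cone with exactly two extremal rays. One of them is already identified by the hypotheses: it is $\mathbb R_{\geq0}[L'']$, generated by the flopped trisecant lines, contracted by the small contraction $\tilde\psi\colon W'\to Y$ and satisfying $K_{W'}\cdot L''=0$. It therefore suffices to show that $\overline C'$ generates the other ray.

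First I would record the intersection numbers on $X'$, which form the elementary core. For a $(3e-1)$--secant curve $C\subset X$ of degree $e$ one has $H'\cdot C'=e$ and $E\cdot C'=3e-1$, whence
\[
-K_{X'}\cdot C'=(3H'-E)\cdot C'=3e-(3e-1)=1 .
\]
Set $D_0=(3e-1)H'-eE$, the pullback to $X'$ of the class of $|H^0(\I_S^e(3e-1))|$. Then
\[
D_0\cdot C'=(3e-1)e-e(3e-1)=0,\qquad D_0\cdot L'=(3e-1)\cdot1-e\cdot3=-1 .
\]
Thus $D_0$ is negative precisely on the flopping ray $\mathbb R_{\geq0}[L']$, which is exactly what exhibits $\tau\colon X'\map W'$ as the $D_0$--flop of $\tilde\phi$.

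Next I would transfer this to $W'$. As $\overline C'$ is a curve and the flopping locus $T'$ is a surface in the fourfold $X'$, a general $(3e-1)$--secant curve avoids $T'$ (expected intersection dimension $1+2-4<0$), so $\tau$ is an isomorphism near $C'$ and $\overline C'\cong C'$ pairs with every divisor as $C'$ does. Crepancy of the flop ($\sigma^{*}K_{X'}=\omega^{*}K_{W'}$, Theorem~\ref{trisflop}) gives $-K_{W'}\cdot\overline C'=-K_{X'}\cdot C'=1>0$, so $\overline C'$ is $K_{W'}$--negative and in particular not proportional to the $K_{W'}$--trivial class $[L'']$. Moreover $D_0^{+}\cdot\overline C'=D_0\cdot C'=0$, where $D_0^{+}$ denotes the strict transform of $D_0$; and since $\sigma^{-1}(L')\cong\mathbb P^1\times\mathbb P^1$ by \eqref{splitT'}, the flop is of Atiyah type along $L'$, the flopping class reverses sign, and $D_0^{+}\cdot L''=-D_0\cdot L'=1>0$.

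Finally I would feed the three numbers $-K_{W'}\cdot\overline C'=1$, $D_0^{+}\cdot\overline C'=0$ and $D_0^{+}\cdot L''=1$ into the two-ray picture. Writing $[\overline C']=a[L'']+b\,r$ with $a,b\geq0$ and $r$ a generator of the second extremal ray, the equality $D_0^{+}\cdot\overline C'=0$ reads $a(D_0^{+}\cdot L'')+b(D_0^{+}\cdot r)=a+b(D_0^{+}\cdot r)=0$; once $D_0^{+}$ is known to be nef this is a sum of non-negative terms, forcing $a=0$ and hence $[\overline C']=b\,r$ with $b>0$, that is, $\overline C'$ generates the second extremal ray. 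The whole argument thus rests on the single point that $D_0^{+}$ is nef on $W'$, and this is the step I expect to be the genuine obstacle. I would obtain it from the $D_0$--flop description: realising $W'=\Proj_Y\bigoplus_{m\geq0}\tilde\phi_{*}\O_{X'}(mD_0)$ makes $D_0^{+}$ relatively ample for $\tilde\psi$, while $-K_{W'}=\tilde\psi^{*}\O_Y(1)$ is nef with $\rho(W'/Y)=1$. Upgrading this relative positivity to global nefness is the crux: the horizontal curves of minimal anticanonical degree are exactly the congruence curves $\overline C'$, on which $D_0^{+}$ vanishes, and these sweep out a divisor, so no effective class can lie strictly beyond $[\overline C']$; equivalently one must prove $\mathrm{Nef}(W')=\langle-K_{W'},\,D_0^{+}\rangle$, which I would establish from the relative $\Proj$ together with this covering family.
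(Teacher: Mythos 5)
Your reduction is set up cleanly (the numbers $-K_{X'}\cdot C'=1$, $D_0\cdot C'=0$, $D_0\cdot L'=-1$ are correct, and $\rho(W')=2$ does leave only a two-ray game), but the proof never closes: everything is funneled into the nefness of $D_0^{+}$ on $W'$, which you acknowledge is ``the genuine obstacle'' and then only gesture at. The problem is not just that this step is missing --- it is that, given your own computations, ``$D_0^{+}$ is nef'' is \emph{equivalent} to the conclusion you are trying to prove. Indeed, if $\overline C'$ generates the second extremal ray $r$, then $D_0^{+}$ is nonnegative on both rays ($D_0^{+}\cdot L''=1$, $D_0^{+}\cdot r=0$), hence nef; conversely your two-ray argument derives extremality from nefness. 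So the proposal is circular at exactly the decisive point. Worse, the route you sketch for nefness (``the congruence curves sweep out a divisor, so no effective class lies beyond $[\overline C']$'') imports a hypothesis the proposition does not make: Proposition~\ref{extremalray} assumes only that \emph{one} $(3e-1)$-secant curve of degree $e$ lies in $X$; the congruence, and the divisor $D$ swept out by its members, enter only later, in Theorem~\ref{contraction}. A smaller but real issue of the same kind: your transfer $D_0^{+}\cdot\overline C'=D_0\cdot C'$ needs $C'\cap T'=\emptyset$, which you justify by a genericity count, whereas the statement is for an arbitrary such curve (by contrast, $-K_{W'}\cdot\overline C'=-K_{X'}\cdot C'=1$ does transfer unconditionally, by crepancy and the projection formula on the common resolution).

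The paper avoids all of this by working upstairs on $X'$ with the one divisor already known to be nef, namely $-K_{X'}=3H'-E$ (globally generated by Assumption~1). Given any effective decomposition $[C']=[C_1]+[C_2]$, the splitting $e_1+e_2=e$, $\beta_1+\beta_2=3e-1$ together with $1=-K_{X'}\cdot C'$ and nefness forces one summand to be $K_{X'}$-trivial, hence a multiple of $[L']$ --- the unique ray contracted by $\tilde\phi$. Since the flop removes precisely the classes in $\mathbb R_{\geq 0}[L']$ from the effective cone of $W'$, no nontrivial effective decomposition of $[\overline C']$ survives there, and extremality follows with no appeal to nefness of $D_0^{+}$ and no congruence. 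If you want to salvage your approach, you should either prove nefness of $D_0^{+}$ by an argument independent of the conclusion (which under the proposition's bare hypotheses is exactly as hard as the statement itself), or replace that step by the paper's decomposition analysis of $K$-trivial summands.
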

\begin{proof} By hypothesis there exists a trisecant flop of the trisecant flop contraction $\tilde\phi:X'=\Bl_SX\to Y$ and hence a commutative diagram:
\begin{equation}\label{diagramma2}
\UseTips
 \newdir{ >}{!/-5pt/\dir{>}}
 \xymatrix{
 &\Bl_TX'=\Bl_RW'\ar[ld]_\sigma\ar[rd]^{\omega}&\\            
 X' \ar@{-->}[rr]_{\tau}\ar[rd]_{\tilde\phi}\ar[d]_\lambda&                             &W'\ar[ld]^{\tilde\psi}\\
X &Y&.
}
\end{equation}
By definition $C'$ is the strict transform of a  curve of degree $e$, which is $(3e-1)$--secant to $S$. Thus $$K_{X'}\cdot C'=(E-3H')\cdot C'=3e-1-3e=-1.$$ Consider the possible degenerations $C''$ of $C'\subset X'$ as sum of effective 1--cycles inside $X'$:
$$C''= C_1+C_2.$$
The cycles $C_1$ and $C_2$ have  degree $e_i=H'\cdot C_i$ and  are $\beta_i=E\cdot C_i$ secant to $S$. In particular $e_1+e_2=e$ and $\beta_1+\beta_2=3e-1$. 
Since $-K_{X'}$ is nef and since
$$1=-K_{X'}\cdot C'=-K_{X'}\cdot (C_1+C_2),$$
 either $K_{X'}\cdot C_1=0$ or $K_{X'}\cdot C_2=0$. In conclusion either $[C_2]=[e_2L']$
or $[C_1]=[e_1L']$ with $L'$ the strict transform of a trisecant line to $S$ contained in $X$. Let $\overline C'\subset W'$ be the strict transform of $C'\subset X'$. Then $[\overline C']$ generates an extremal ray because $\tilde\phi$ has contracted all the rational curves in $\mathbb R_+[L']$.  
\end{proof}

The locus of the extremal ray $\mathbb R_+[\overline C']$ will determine the type of the associated elementary Mori contraction from $W'$ onto a $\mathbb Q$-factorial Fano variety. Here we shall  consider only  the most relevant case for our applications. Examples of fiber type contractions can be constructed as soon as $|(3e-1)H'-eE|\neq\emptyset$ and $S\subset\p^5$ has a finite number $\rho\geq 2$
of $(3e-1)$-secant curves of degree $e\geq 2$ passing through a general point of $\p^5$. The dimension of the general fiber of the contraction will be $\rho-1=4-\dim(\mu(X))$, where $\mu$ is the rational map on $X$ defined by the linear system of hypersurfaces of degree $3e-1$ having points of multiplicity at least $e$ on $S$.

\begin{thm} {\rm ({\bf Extremal contraction of the congruence})}\label{contraction}  Let notation  be as above. Suppose that  $S\subset\p^5$
satisfies Assumption~\ref{assumption1}, that it has the expected trisecant behaviour and that there exists the trisecant flop $\tilde\psi:W'\to Y$ of the trisecant flop contraction $\tilde\phi:X'=\Bl_SX\to Y$
with $X$ a general cubic fourfold through $S$ and with $T'$ irreducible. 

If $S\subset\p^5$ admits a congruence $\pi:\mathcal D\to \mathcal H$ of $(3e-1)$--secant rational curves of degree $e\geq 2$, then  the locus of curves
of the congruence contained in  $X\subset\p^5$ is an irreducible divisor $D\subset X$ and the following hold:
\begin{enumerate}

\item  there exists a divisorial contraction $\nu:W'\to W$, with
$W$ a locally $\mathbb Q$--factorial projective Fano  variety, whose exceptional locus $\overline E$ is the strict transform of $D$ on $W'$ and such that $\nu(D)=U$ is an irreducible surface  supporting  the base locus scheme $B$ of $\nu\inv$.  The base locus scheme $B$ is generically smooth , irreducible and $\nu$ is generically the blow--up of the surface $U$. 

\item  
The induced birational map $\mu':X'\map W$ (or $\mu:X\map W$) is given by a linear system in $|(3e-1)H'-eE|$.

\item Let $\overline H'=\nu^*(\overline H)$ with
$\overline H\subset W$ a generator of $\Pic(W)$ and let $-K_W=i(W)\overline H$.
The induced birational morphism $\tilde\psi:W'\to Y$ is given by a linear system in $|i(W)\overline H'-\overline E|$ while the birational map
$W'\map X$ is given by a linear system in $|(i(W)\cdot e-1)\overline H'-e\overline E|$. The strict transform $D'\subset W$ of $E$ via $\mu'$ 
is a locus of  $(i(W)\cdot e-1)$--secant curves to $U$ of degree
$e$  such that through a general point of $D'$ there passes a unique curve of the family. 

\item The irreducible components of $T$ are contained in the base locus scheme of $\mu$ and their flopped images on $W$ are contained in the base locus scheme of $\mu\inv$.
The flopped images of the scrolls in $T$ are scrolls $R\subset W$ ruled by  {\it lines} in $W$, which are $i(W)$-secant to $U$.
\end{enumerate}
\end{thm}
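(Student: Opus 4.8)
The plan is to run the standard Mori-theoretic machinery on the log Fano fourfold $W'$, using the extremal ray produced in Proposition~\ref{extremalray} and identifying its geometry through the explicit divisor $|(3e-1)H'-eE|$ on $X'$. First I would set up the numerical bookkeeping on $X'$. The congruence $\pi\colon\mathcal D\to\mathcal H$ furnishes a one-dimensional family of $(3e-1)$-secant rational curves of degree $e$ sweeping out a divisor $D\subset X$: since $S$ admits a four-dimensional family of such curves in $\p^5$ and the generic one meets a general cubic $X$ in finitely many points, the curves lying entirely inside $X$ form a family whose locus $D$ is a divisor (this is exactly the transversality/dimension count of Theorem~\ref{criterion} and Subsection~\ref{nec}). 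On $W'$, the strict transform $\overline E$ of $D$ is then the locus swept out by the extremal curves $[\overline C']$. The core structural step is to show that the contraction $\nu\colon W'\to W$ of the extremal ray $\mathbb R_{\geq 0}[\overline C']$ is \emph{divisorial} with exceptional divisor $\overline E$, and that $W$ is $\mathbb Q$-factorial Fano with $\Pic(W)\simeq\mathbb Z$. Because $\tau\colon X'\map W'$ is a flop (it preserves $K$), we have $-K_{W'}$ nef and big, so by the Cone and Contraction Theorems the ray determines an elementary contraction $\nu$; that it is divisorial (rather than of fiber type) follows from the hypothesis that the curves of the congruence cover only a divisor $D$, i.e. through a general point of $X$ there passes a \emph{unique} curve of the congruence, forcing the fibers of $\nu$ over its image $U$ to be one-dimensional. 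Since $\rho(W')=2$ and $\nu$ contracts one extremal ray, $\rho(W)=1$, whence $\Pic(W)\simeq\mathbb Z$ and $W$ is Fano.

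Granting the divisorial contraction, I would next analyze the fiber structure of $\nu$ over $U:=\nu(\overline E)=\nu(D)$. The general fiber is an irreducible rational curve of class $[\overline C']$, contracted to a point; since the normal-bundle splitting coming from the flop (Lemma~\ref{split1} and \eqref{splitT'}) controls the local structure, the contraction is locally the blow-down of a $\p^1$-bundle structure, so that $\nu$ is generically the blow-up of the smooth locus of $U$ and the base-locus scheme $B$ of $\nu^{-1}$ is supported on $U$ with $B$ generically smooth. This gives part (1). For part (2), I would observe that the birational map $\mu'=\nu\circ\tau\colon X'\map W$ contracts precisely the curves in $\mathbb R[L']$ (via $\tilde\phi$) and the flopped image of $E$, so $\mu'$ is given by the linear subsystem of $|{-K_{X'}}|^{\otimes?}$ that vanishes appropriately; concretely, pulling back $\overline H$ and tracking multiplicities along $S$ shows $\mu'$ is defined by a system in $|(3e-1)H'-eE|$ — this is the system whose members are the degree-$(3e-1)$ forms with multiplicity $e$ along $S$ that contract the congruence fibers, exactly as anticipated in Subsection~\ref{nec}.

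For part (3) the plan is to dualize via the index. Writing $-K_W=i(W)\overline H$ and $\overline H'=\nu^*\overline H$, the blow-up formula gives $K_{W'}=\nu^*K_W+\overline E=-i(W)\overline H'+\overline E$; comparing with $K_{W'}=\tau_*K_{X'}$ (flops preserve the canonical class) and with $K_{X'}=E-3H'$ pinned down in the Assumption-1 discussion yields the linear equivalences $\overline H'\sim$ (pushforward of $H'$) and the identity $-K_{W'}=i(W)\overline H'-\overline E$, which is nef and big. Then $\psi=\tilde\psi\circ\nu^{-1}\colon W\map Y$, being the composite that resolves to the flop contraction on $W'$, is cut out by $|{-K_{W'}}|=|i(W)\overline H'-\overline E|$; and $\mu^{-1}=\psi^{-1}\circ\phi$, together with the symmetry of the whole picture in diagram~\eqref{diagramma1}, forces $\mu^{-1}$ to be given by $|(i(W)e-1)\overline H'-e\overline E|$, whose members have multiplicity $e$ along $U$ and which contracts the strict transform of $E$ as a locus of $(i(W)e-1)$-secant degree-$e$ curves to $U$. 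Part (4) then follows formally: the two-dimensional components of $T$ lie in $\Bs(\mu)$ because $\tilde\phi$ contracts them, their flopped images lie in $\Bs(\mu^{-1})$ by the symmetry of the flop, and the secancy count $(i(W)e)$ for the ruling curves of the flopped scrolls is read off from $\overline H'\cdot(\text{ruling})$ using the equivalence just derived.

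\textbf{Main obstacle.} The hard part will be establishing that $\nu$ is genuinely \emph{divisorial} and that $W$ is $\mathbb Q$-factorial Fano with the expected smoothness of $B$ — i.e. ruling out a small contraction or a fiber-type contraction and controlling the singularities of $U$. Everything else is numerical bookkeeping propagated through the blow-up and flop formulas, but the geometric identification of the extremal contraction's type rests on the delicate claim that the congruence curves inside $X$ cover only a divisor (uniqueness of the curve through a general point) and on the normal-bundle control from Lemma~\ref{split1}; I expect this to be where the genericity hypotheses on $X$ and on $S$ (expected trisecant behaviour, Assumption~1) are really used.
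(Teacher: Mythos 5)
Your proposal is correct and follows essentially the same route as the paper's proof: the extremal ray from Proposition~\ref{extremalray} whose locus is the divisor $\overline E$, the divisorial contraction $\nu$ with $\dim U=2$ and one-dimensional general fibers forced by the uniqueness of the congruence curve through a general point, and the determination of all the linear systems in (2)--(4) by intersecting candidate classes with the flopping curves $F'$ and the congruence curves $\overline C'$ (the paper pins the coefficients $a=1$ and $a'=i(W)e-1$ by exactly the flop-compatibility computations you sketch). The only cosmetic difference is that the paper exhibits the supporting nef divisor $\overline M=-eK_{X'}-H'$ explicitly rather than invoking the Cone and Contraction Theorems on the weak Fano $W'$, which amounts to the same thing.
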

\begin{proof} Let notation be as above and let $p:\mathcal D\to \p^5$ be the tautological morphism of the congruence and let $\mathcal H$ be its parameter space. Since $p$ is birational, the locus $\mathcal E\subset\p^5$ of points through which there passes more than one curve of the congruence has codimension at least two in $\p^5$ by Zariski Main Theorem. Since the curves of the congruence are $(3e-1)$-secant to $S$ and hence to $X$, to be contained in $X$ imposes two conditions in $\mathcal H$ by Bézout Theorem. Putting these two facts together we deduce that $D$ is a divisor inside $X$, whose irreducibility will be proved below. By hypothesis there exists a trisecant flop of the trisecant flop contraction $\tilde\phi:X'=\Bl_SX\to Y$ and hence the commutative diagram \eqref{diagramma2}.
Let $D'\subset X'$ be the strict transform of $D$ on $X'$ and let $C'\subset D'$ be the strict transform of a general curve $C$ of the congruence $\pi:\mathcal D\to \mathcal H$  contained in $X$. By definition $C'$ is the strict transform of a smooth rational curve of degree $e\geq 2$ which is $(3e-1)$--secant to $S$. 

Let $\overline C'\subset W'$ be the strict transform of a general curve of the congruence $C'\subset D'$ and let $\overline E\subset W'$ be the strict transform of $D$. Then $K_{W'}\cdot \overline C'=-1$ and $[\overline C']$ generates an extremal ray by Proposition \ref{extremalray}.
 By construction the locus of the extremal ray $\mathbb R_+[\overline C']$ is the divisor $\overline E$ and $N_{\overline C'/W'}\simeq \Oo_{\p^1}\oplus\Oo_{\p^1}\oplus\Oo_{\p^1}(-1)$, see for example \cite[Lemma 2.5]{Ando}. 
By \cite[Theorem 2.1]{Ando} there exists a birational divisorial contraction $\nu:W'\to W$ of the extremal ray $\mathbb R_+[\overline C']$ with $W$ a locally $\mathbb Q$-factorial projective variety of dimension four with $\Pic(W)=\mathbb Z\langle\overline H\rangle$ and with $\overline H$ ample. If $i(W)$ is defined by $-K_W=i(W)\overline H$, then  $i(W)>0$ since $W$ is birational to $X$. Moreover, the divisor $\overline E$ (and hence the divisor $D$) is irreducible being the exceptional locus of $\nu$ (recall that $\Pic(W')\simeq \mathbb Z\oplus\mathbb Z$).

Let $U=\nu(\overline E)\subset W$. Since the $(3e-1)$-secant curves to $S$ belong to a congruence, through a general point of  $D$ there passes a unique curve of the congruence, as recalled at the beginning of the proof. So  the same holds for $\overline E$ and the restriction of $\nu$ to $\overline E$ has general fiber isomorphic to a curve $\overline C'$, giving $\dim(U)=2$. In particular $N_{\overline C'/\overline{E}}\simeq 
\Oo_{\p^1}\oplus\Oo_{\p^1}$ for a general $\overline C'$ so that  the previous splitting of the normal bundle $N_{\overline C'/W'}$ yields  $\overline E\cdot \overline C'=-1$. Therefore, there exists an open subset  $U_0\subset U$ consisting of smooth points of $U$ and such that the fiber of $\nu:\overline E\to U$ over a point of $U_0$ is isomorphic to $\p^1$. Then $\overline E_0=\nu^{-1}(U_0)\subseteq \overline E$ is smooth and, letting  $W_0\subset W$ be an open subset such that $W_0\cap U=U_0$,  $\nu\inv(W_0)\to W_0$ is the blow-up of $W_0$ along $U_0$,  see for example \cite{Nakano, FNakano}. In particular the base locus scheme of $\nu\inv$, which is supported on $U$, coincides generically with $U$ and hence it is generically smooth. All the assertions in (1) are now proved.

The birational map $\mu'=\nu\circ\tau:X'\map W$ is given by a linear system in $|a[(3e-1)H'-eE]|$, $a\geq 1$.  Indeed, $\Pic(X')\simeq \mathbb Z\langle H'\rangle\oplus\mathbb Z \langle E\rangle$ and such a divisors is of the forme $\alpha H'-\beta E$ with $\alpha>0$ and $\beta>0$.  From 
$$0=(\alpha H'-\beta E)\cdot C'=\alpha e-\beta(3e-1)$$
and from  $e\geq 2$, we get $\alpha=a(3e-1)$ and $\beta=a e$ with $a\geq 1$.
The irreducible components
of $T$ are contained in the base locus scheme of this linear system  because 
$$a[(3e-1)H'-eE]\cdot L'=-a<0$$
for every strict transform of a general $3$-secant line $L$ to $S$ contained in $X$. Since the map $\mu'$ is compatible
with the trisecant flop, necessarily   $a=1$. Indeed, after the blow-up of $T$ the birational map $\mu'$ becomes a morphism so that 
\begin{equation}\label{flopcomp}
[N^*_{T'/X'}\otimes \O_{X'}(a[(3e-1)H'-eE])]_{|L'}\simeq  \Oo_{\p^1}(1-a) \oplus \Oo_{\p^1}(1-a)
\end{equation}
 is generated by global sections, yielding $a\leq 1$ and hence $a=1$, concluding the proof of (2).

 We have a commutative diagram:
\begin{equation}\label{diagmu}
\UseTips
 \newdir{ >}{!/-5pt/\dir{>}}
 \xymatrix{
 &\Bl_{T'}X'=\Bl_{R'}W'\ar[ld]_\sigma\ar[rd]^{\omega}&\\             
 X' \ar@{-->}[rr]_{\tau}\ar[d]_\lambda\ar@{-->}[rrd]_{\mu'} &                             &W'\ar[d]^\nu \\
X\ar@{-->}[rr]_{\mu} & & W
}
\end{equation}

 Let $\overline H\subset W$  be as above, let
  $\overline H'$ be its strict transform on $W'$ and, keeping notation as in the proof of Theorem \ref{trisflop}, let $L''=\omega(\Sigma_{L'})\subset W'$ be a general fiber of the ruling of the smooth surface $R'=\omega(E')\subset W$. Since $\nu:W'\to W$ is generically the blow-up of $U$, we have  $-K_{W'}=i(W) \overline H'-\overline E$.  The morphism $\nu\circ\omega: \Bl_RW'\to W$ is given by a linear system in $|(3e-1)\sigma^*(H')-e\tilde E-E'|$ with $\tilde E$ the strict transform of $E$ on $\Bl_TX'$. Then 
 $$\nu(L'')\cdot \overline H=F_1\cdot ((3e-1)\sigma^*(H')-e\tilde E-E')=-(F_1\cdot E')=1,$$
  that is $\nu(L'')\subset W$ is a {\it line} with respect to $\overline H$. By Projection Formula we deduce $L''\cdot \overline H'=1$
  and $0=-K_{W'}\cdot L''=i(W)-\overline E\cdot F'$  yields 
  $i(W)=\overline E\cdot L''$, that is the {\it lines}  $\nu(L'')$ are $i(W)$-secant to $U\subset W$.  We also have $\overline C'\cdot \overline E=1$ and  $\overline H'\cdot \overline C'=0$. Since the birational morphism $\tilde\psi:W'\to Y$
  is given by a linear system in $|i(W)\overline H'-\overline E|$ as shown above, the birational map $\psi=\tilde\psi\circ\nu\inv:W\map Y$ is given by a linear system of divisors in $|O_W(i(W))|$ vanishing on $U$.

We have $\Pic(W')\simeq \mathbb Z\langle \overline H'\rangle\oplus\mathbb Z\langle \overline E\rangle$ so that the map $\eta'=\mu\inv\circ\nu:W'\map X$ is given by a linear system in $|\alpha'\overline H'-\beta'\overline E|$ with $\alpha'>0$ and $\beta'>0$. Since  general fiber $\overline C'$ of $\nu:\overline E\to U$ is sent into a curve of the congruence $\mathcal D$, which by definition  has degree $e\geq 2$,
we deduce
$$e=(\alpha \overline H'-\beta\overline E)\cdot \overline C'=\beta.$$
Moreover, reasoning as above, the compatibility with the trisecant flop yields
$$-1=(\alpha\overline H'-e\overline E)\cdot L''=\alpha-e\cdot i(W),$$
that is  $\alpha=i(W)\cdot e-1.$
  In conclusion, the birational map $\eta'$ 
 is given by a linear system in $|(i(W)\cdot e-1)\overline H'-e\overline E|$ of dimension 5 and $\mu\inv$ is given by a linear system of dimension 5 of divisors
 in $|H^0(\Oo_W((i(W)\cdot e-1))|$ having points of multiplicity at least $e$ along $U\subset W$.  The previous analysis shows that the base locus of $\mu\inv$ contains $U$ and $\nu(R)$, which is a locus of $i(W)$-secant {\it lines} to $U$ contained in $W$, concluding the proof of (4). 
 
 Let $D'\subset W$ be the strict transform of $E\subset X'$ via $\mu$, which is an irreducible divisor. Let $F\subset E$ be a positive dimensional fiber of $\lambda:X'\to X$. Then $D'$ is ruled by the strict transforms of the curves $F$ and such a general point of $D'$ there passes a unique curve of this family. Moreover,
$F\cdot [(3e-1)H'-eE]=e$ and, letting $\overline F=\tau(F)\subset W$ and recalling that $\eta'(\overline F)$ is a point, we have 
$$0=\overline F\cdot  [(i(W)\cdot e-1)\overline H'-e\overline E]=e[(i(W)\cdot e-1)-\overline F\cdot \overline E],$$ 
yielding $\overline F\cdot\overline E=i(W)\cdot e-1$. In conclusion, $\mu'(F)\subset W$ is a curve of degree $e$ with respect to $\overline H$ which is $(i(W)\cdot e-1)$-secant to $U\subset W$. This proves the last assertion in (3).
\end{proof}

\begin{Remark}\label{Mukai} Obviously,  one can also reverse the construction in Theorem \ref{contraction} starting from suitable $U\subset W$ and then producing the congruences of $(3e-1)$-secant curves of degree $e$ to a surface $S\subset X\subset\p^5$ by taking the image of $\overline E$ in $X$ and by taking $S\subset X\subset\p^5$ as the surface describing the linear system defining the inverse map $\mu:X\map W$. In practice, as soon as the trisecant flop exists, the existence of a congruence of $(3e-1)$-secant lines to $S$ is equivalent to the existence  of the surface $U\subset W$, which should be an incarnation of the associated K3 surface (see the end of subsection \ref{s38} for the analysis of the case  $d=38$ to see one such explicit incarnation). From this point of view one associates to the pair $(X,S)$ a pair $(W,U)$, where $W$ is the image of $X$ and  the surface $U$ is naturally the parameter space of the curves of the congruence $\mathcal D$ contained in $X$.
\end{Remark}

\section{Associated K3 surfaces to cubic fourfolds in $\mathcal C_{38}$ via the trisecant flop}
\label{explicitflops} 

In this section, as an application of previous theoretical results, we describe
birational incarnations of the  K3 surfaces {\it associated} to the cubic fourfolds in $\mathcal C_{38}$ 
via Hodge Theory or via Derived Category Theory. For the sake of brevity, we omit a similar analysis of the cases 
of cubic fourfolds in
$\mathcal C_{14}$ and $\mathcal{C}_{26}$, which has been outlined  in the first arXiv version of this paper.
The case of cubic fourfolds in $\mathcal C_{42}$ will be considered in Subsection~\ref{TF42}.

\subsection{\bf General properties of  degree 10 smooth surfaces \texorpdfstring{$S_{38}\subset\p^5$}{S38 in P5} of sectional genus 6}\label{s38}
Let us consider the smooth surfaces $S_{38}\subset\p^5$   obtained as the image
of $\p^2$ by the linear system of plane curves of degree 10 having 10 fixed triple points in general position.
These surfaces are contained in a general cubic fourfold in the admissible divisor $\mathcal C_{38}$, as shown in \cite{Nuer}. They  were also studied in \cite{RS1, Explicit} to prove that every cubic fourfold in $\mathcal C_{38}$ is rational. 

The Hilbert scheme $\mathcal S_{38}$ parametrizing such surfaces is explicitly unirational,  that is
we can write out equations for the general member 
$[S_{38}]\in\mathcal S_{38}$ over a pure transcendental extension of the base field.
From this, one can deduce that 
the homogeneous ideal of $S_{38}$ is generated by 10 cubic forms, whose first syzygies are generated by 
the linear ones.
In particular the general $S_{38}\subset\p^5$  satisfies  condition $\mathcal K_3$. By Proposition \ref{verme} the linear system 
$|H^0(\mathcal I_{S_{38}}(3))|$ defines a birational map $\phi:\p^5\map Z\subset\p^9$ onto its image $Z$. Through a general point $\phi(p)\in Z$ there passes 8 lines contained in $Z$. The pullbacks of these lines are seven secant lines to $S_{38}$ passing through $p$ and a 5-secant conic to $S_{38}$. In particular, a general  $S_{38}\subset\p^5$ admits a congruence of 5-secant conics  (see \cite[Section~5]{RS1} for details on this computation and also Subsection \ref{contid38}).

Moreover, we have 
$|H^0(\mathcal I_{S_{38}}^2(5))|=\PP^4$ for a general $S_{38}\subset\p^5$  and the coefficients of the multidegree of the graph of the associated rational map $\mu:\p^5\map\p^4$ are $(1, 5, 19, 13, 2)$ (see Subsection \ref{contid38}).  From this one deduces that: $\mu$ is dominant since the last entry  is equal to 2 (this means that the closure of a general fiber of $\mu$, $F=\overline{\mu\inv(\mu(p))}$ with $p\in\p^5$ general has degree 2 and dimension 1); its base locus scheme $B\subset\p^5$  has  degree $6=5^2-19$ and dimension three. Since the unique 5-secant conic $C_p$ to $S_{38}$ passing through $p$ is contracted  by $\mu$ to the point $\mu(p)$ ($5\cdot 2-2\cdot 5=0$), the fiber $F$ coincides with $C_p$ and it is  irreducible (otherwise one can argue as in Subsection \ref{contid38}, prove that $F$ is an irreducible conic and verify that it is 5-secant to $S_{38}$, yielding a different proof of the existence of the congruence of 5-secant conics).
 
The rationality of a general $X$ through a general $S_{38}$ follows by restricting $\mu$ to $X$. Indeed, through a general $q\in X$ there passes a unique conic $C_q$ of the congruence, which is not contained in $X$ by the generality of $q$. The conic $C_q$ cuts $X$ in $q$ and in five points on $S_{38}$, yielding that  the general fiber of  map $\mu=\mu_{|X}:X\map\p^4$ is a point by Bézout Theorem.

\subsection{Small contraction defined by cubics through a general $S_{38}\subset\p^5$}\label{small38}

The (closure of the) fibers of $\phi:\p^5\map Z$ are linear spaces of dimension $s$ with $0\leq s\leq 2$. 
The two  dimensional fibers of $$\tilde\phi:\Bl_{S_{38}}\p^5 \to Z\subset\p^9$$ 
are  the strict transforms of planes in $\p^5$ cutting  $S_{38}$ along plane cubic curves by Proposition \ref{verme}. Let  $C\subset S_{38}\simeq \Bl_{\{p_1,\ldots, p_{10}\}}\p^2\subset\p^5$ be such a cubic and recall that  the embedding is given by $|10 H-\sum_{i=1}^{10}3E_i|$, using  the standard notation. Since the curve  $C$ is contained in a plane, it  cuts each line $E_i\subset S_{38}$ in at most one point so $C\equiv \alpha H-\sum_{i=1}^{10}a_iE_i$ with $0\leq a_i\leq 1$ and $\alpha\geq 1$. From $$3=C\cdot (10H-\sum_{i=1}^{10}3E_i)=10\alpha-3\sum_{i=1}^{10}a_i,$$ 
we deduce $\alpha=3$ and  $a_i=1$ for nine of the ten indices. Hence $C$ is  the image of a plane cubic curve passing through  9 of the 10 general base points on $\p^2$. In conclusion there are 10 two dimensional fibers of $\tilde\phi$. The other positive dimensional fibers  of $\tilde\phi$ are 
the strict transforms of trisecant lines to $S_{38}$. From the equations of the base locus scheme $B$ of $\mu$, we deduce that $B$ has a unique irreducible component $\tilde B$ of dimension three and degree 6, which contains $S_{38}$. In particular, $B$ is generically reduced along $\tilde B$.   An explicit computation shows that the variety $\tilde B$ is mapped by  $\phi$ onto an irreducible surface  $V\subset\ Z\subset\p^9$, which is  a  Veronese surface generating a $\p^5\subset\p^9$. The general fiber of the restriction of $\phi$ to $\tilde B$ has  dimension one and hence it is a trisecant line to $S_{38}$.  A trisecant line to $S_{38}$ is contained in the base locus scheme $B$ of $\mu$ because it intersects a quintic with double points along $S_{38}$ in at least six points counted with multiplicity. Hence  $\tilde B$ is the unique irreducible component of $\Trisec(S_{38})$ of dimension 3 and $S_{38}$ has the expected trisecant
behaviour because the irreducible component of $\Al^3 S_{38}$ corresponding to $\tilde B$ is birational to the smooth irreducible surface $V$. From this analysis and from the previous computations, we deduce that $\Trisec(S_{38})$ consists of $\tilde B$ and of the 10 planes cutting $S_{38}$ along cubic curves.

Since the ten planes in $\Trisec(S_{38})$ are mapped to ten points in $Z$, a general hyperplane section of $Z$ does not pass through these 10 points. Hence a general cubic hypersurface $X\subset \p^5$ through $S_{38}$ does not contain any of the 10  planes in $\Trisec(S_{38})$. The restriction of $\tilde\phi$ to $X'=\Bl_{S_{38}}X$ induces a small contraction:
$$\tilde\phi:X'=\Bl_{S_{38}}X\to Y\subset\p^8,$$
with $Y=Z\cap H$ the corresponding hyperplane section of $Z$. From the previous description we deduce that  $\tilde B\cap X=T\cup S_{38}$ 
with  $T\subset X$  an irreducible surface by the generality of $X$. Moreover $\deg(T)=3\times 6-10=8$ and 
$\phi(T)=C=V\cap H\subset Y=Z\cap H\subset \p^8$ is a smooth rational normal quartic curve (a  general  hyperplane section of the Veronese surface $V\subset Z$). Hence $T$ is ruled by the trisecant lines to $S_{38}$ contained in $X$ via the restriction of $\phi$. The Double Point Formula yields that the  singular locus of the rational scroll $T$, projection of a smooth rational normal scroll of degree 8 in $\p^9$, consists of six singular points.  Its strict transform  $T'\subset X'$ is smooth and $\tilde\phi|_{T'}:T'\to C$ is a $\p^1$-bundle. The birational morphism $\tilde\phi$ is an isomorphism between $X'\setminus T'$ and $Y\setminus  C$ and hence it is a trisecant flop contraction.

\subsection{The trisecant flop determined by $S_{38}\subset\p^5$}\label{desTF38}

Theorem \ref{trisflop} and Theorem \ref{contraction} 
assure the existence of the trisecant flop  $\tilde\psi:W'\to Y$
and of the divisorial contraction $\nu:W'\to \p^4$ giving a factorization of the birational map $\mu:X\map \p^4$ (see the commutative diagram \eqref{diagmu} also for recalling the notation).

The scroll  $\overline R=\nu(R')\subset\p^4$ has degree 6 (recall the we tensor with $\Oo_{\p^1}(-1)$ performing the flop, see \eqref{flopcomp}) and $\mu^{-1}:\p^4\map X$ is given by a linear system in $|H^0(\mathcal I_U^2(9))|$ by part (3) of Thereom \ref{contraction}, where $U\subset\p^4$ is the support of the base locus $\overline B$ of $\nu\inv$ (recall that $\overline B$ is generically reduced so that it coincides with $U$ generically). By part (4) of Theorem \ref{contraction}, the lines of the scroll $\overline R$ are 5-secant to $U$, the map $\psi$ is given by a linear system in $|H^0(\mathcal I_U(5))|$ and $\psi(\overline R)=\tilde\psi(R')= C$. 

The cubics through $S_{38}$ restricted to $X$ are mapped by $\mu$ onto quintics  
 defining 
$\overline B$ as a scheme by part (3) of Theorem \ref{contraction}. Taking  a basis of cubics $X_i$ through $S_{38}$ restricted to $X$, $i=1,\ldots, 9$, their images $V_i=\mu(X_i)\subset\p^4$ determine the ideal of $\overline B$. In Subsection \ref{contid38}  and in the ancillary file 
\verb!code_section_6.m2! we verified that  $\overline B$ is a smooth surface of degree 12 and sectional genus 14 so that it coincides with  $U$ as scheme. Hence $U\subset\p^4$ is a smooth surface of degree 12 and sectional genus 14, whose ideal is generated  by 9 forms of degree five. Since $\nu\inv(\overline B)=\overline E$, the universal property of 
the blow-up yields a birational projective morphism $\delta:W'\to \Bl_U\p^4$ between smooth projective fourfolds. Since $\rk(\Pic(W'))=2=\rk(\Pic(\Bl_U\p^4))$,  we have $W'\simeq\Bl_U\p^4$ by Zariski Main Theorem. Moreover,  $$\mu\inv:\p^4\map X\subset\p^9$$ is given by the linear system 
$|H^0(\mathcal I_{U}^2(9))|$ by part (3) of Theorem \ref{contraction}.

Let $A=\mathbb C[t_0,\ldots,t_4]$. Using  the explicit equations constructed above,  we can compute the resolution of the homogenous ideal of $U$:
$$0\leftarrow I_U\leftarrow A(-5)^{\oplus 9}\leftarrow A(-6)^{\oplus 11}\leftarrow A(-7)^{\oplus 3}\leftarrow 0$$
and also verify that  $p_g(U)=1$ and $q(U)=0$, see Subsection  \ref{contid38}. To conclude the analysis of $U\subset\p^4$ we shall follow the arguments in \cite[Section 2.7]{DES},
where the authors gave a different construction of the above surface via Beilinson Spectral Sequence methods.

The intersection matrix for the sublattice $\langle H, K_U\rangle$ of $\Num(U)$ is
$$\left(\begin{matrix}
H^2&H\cdot K_U\\
H\cdot K_U&K_U^2
\end{matrix}\right )=
\left(\begin{matrix}
12&14\\
14&-11
\end{matrix}\right),
$$
where $K_U^2=-11$ is deduced from the Double Point Formula for a smooth surface in $\p^4$.
The number $N_6$ of proper 6-secant lines to $U$ (if finite) plus the number of exceptional lines, that is $(-1)$-curves $L_i\subset U$ such that $H\cdot L_i=1$ and $L_i^2=-1$, is equal to 10 by Le Barz Formulas, see {\it loc. cit.}. There are no proper 6-secant lines  since $I_U$  is generated by quintic forms.
Let $U'\subset\p^{14}$ be the first adjunction surface of $U$, that is the image of the birational morphism $\gamma:U\to U'$ defined by the base point free linear system $|K_U+H|$ (see the Introduction of \cite{DES} for a summary of the basic results of adjunction theory on surfaces). Since the above morphism contracts the exceptional lines on $U$, the morphism $\gamma$ realizes  $U$ as  the blow-up of $U'$ in 10 distinct points and $K_{U'}^2=K_U^2+10=-1$.  Since $H=\gamma^*(H')-\sum_{i=1}^{10}L_i$, we also have  $H'\cdot K_{U'}=H\cdot K_U-10=4$. 

Since  $p_g(U')=p_g(U)=1$ and since $K_{U'}^2=-1$, the canonical divisor is effective but not nef and the surface $U'$ is non minimal. Let  $E\subset U'$ be a $(-1)$ curve, let $K_{U'}=D+E$ ($D\geq 0$) and let $\tilde E\subset U$ be an irreducible curve such that $\gamma(\tilde E)=E$. Then $\tilde E^2\leq E^2=-1$
and 
$$0<(K_U+H)\cdot \tilde E=(K_{U'}+H')\cdot E=H'\cdot E-1$$ yields $H'\cdot E\geq 2$. From $4=H'\cdot (D+E)$ we deduce $H'\cdot D\leq 2$ and that $E$ is not contained in the support of $D$ (otherwise $K_{U'}=2E$ and $K_{U'}^2=-4$). Then $D\cdot E=0$, $D^2=0$ and $K_{U'}\cdot D=0$. If $H'\cdot E=2$, then $H'\cdot D=2$. From $(K_{U'}+D)\cdot D=0$, we deduce that  $D$ cannot be an irreducible conic. If $D=2\cdot \tilde L$, then $H'\cdot \tilde L=1$ would be a line such $(K_{U'}+\tilde L)\cdot \tilde L=0$, which is impossible.  In the same way we can exclude that $H'\cdot E=3$, conclude that  $H'\cdot E=4$ and finally that $K_{U'}=E$. Let $\epsilon:U'\to U''$ be the contraction of $E$ to a point of the smooth surface $U''$. Since $K_{U''}=0$ and since $q(U'')=q(U)=0$, the surface  $U''$ is  a K3. The curve  $\tilde E\subset U$ is a smooth rational normal curve of degree 4, proving that the distinct points $p_i=\gamma(L_i)$ do not belong to $E$. In the next section we shall develop  an alternative geometric method to obtain the whole configuration of the $(-1)$-curves on $U$ in explicit examples.

Let  $\pi:U\to U''$ be the blow-up of $U''$ at the eleven distinct points described above and let $H''=\pi_*(H)$.
Then $H''$ is an ample divisor on $U''$ with a point of multiplicity four at $p_{11}$ and passing simply through $p_1,\ldots, p_{10}$ and
the linear system $H$ on $U$ is  thus given by $|\pi^*(H'')-\sum_{i=1}^{10}L_i-4E|$. Hence 
$(H'')^2=(H')^2+10+16=12+10+16=38$ and  $p_a(H'')=p_a(H')+6=20$. The generality of $X$ through $S_{38}$ assures that the K3 surface has general moduli and that $\Pic(U'')\simeq \mathbb Z$ (a direct proof of this fact will be given below). Hence the divisor $H''$ is very ample and gives and embedding $U''\subset \p^{20}$.

\subsection{The associated K3 surface to a general cubic in $\mathcal C_{38}$ via the trisecant flop}\label{assK3C38}

Via the trisecant flop and via the contraction of the curves of the congruence contained in $X$, we proved that the associated surface $U\subset\p^4$ to a general pair $(X,S_{38})$ is  a  birational incarnation of a  general smooth K3 surface $U''\subset \p^{20}$ of degree 38 and genus 20. We now want to  show that the surface $U''$ (or $U$) is associated to $X$ in the sense of  Hodge Theory (or, equivalently, of  Derived Category Theory) following the treatment by Hassett in  \cite[Section 3]{Levico}.
\medskip

Let us recall that for an arbitrary cubic fourfold $X\subset\p^5$, letting $h^{p,q}$ denote the Hodge numbers of $X$, we have $h^{0,4}=h^{4,0}=0$, $h^{1,3}=h^{3,1}=1$, $h^{2,2}=21$. 
Let $h$ denote the class of a hyperplane section of $X$ and let $$H^4_{\prim}(X,\mathbb Z)\simeq \langle h^2\rangle^\perp\subset H^4(X,\mathbb Z)$$ be the primitive cohomology of $X$. This cohomology reminds the $H^2$ cohomology of a K3 surface $S$, modulo a Tate twist by $-1$, since $S$ has Hodge numbers $h^{1,0}=h^{0,1}=1$, $h^{1,1}=20$.
The intersection forms have signatures: $(20,2)$ for $H^4_{\prim}(X,\mathbb Z)$ by the Hodge-Riemann bilinear relations and $(19,3)$ for $H^2(S,\mathbb Z)(-1)$. They become compatible as soon as one can find a common codimension one rank sublattice with signature $(19,2)$.

The definition of  $\mathcal C_{38}$ (see \cite[Section 2.3]{Levico} for the general theory) and the fact that a general $[X]\in\mathcal C_{38}$ contains a surface $S_{38}\subset\p^5$ (see \cite{Nuer}) yield
$$H^{2,2}(X,\mathbb{Z})=H^4(X,\mathbb Z)\cap H^1(\Omega^1_X)\simeq\mathbb{Z}\langle h^2, [S_{38}] \rangle$$ for such a $X$.
  Let $\mathcal T_X\subset H^4(X,\mathbb{Z})$ denote the transcendental part
of the cohomology of a cubic fourfold $X\subset\p^5$. Then in our setting $$\mathcal T_X=\langle h^2, [S_{38}] \rangle^\perp \subset H^4(X,\mathbb Z)$$ has rank 21 and signature $(19,2)$.
Clearly $H^2(S_{38},\mathbb Z)\simeq \mathbb Z^{11}$,  $H^2(T',\mathbb Z)\simeq \mathbb Z^2\simeq H^2(R',\mathbb Z)$ and $H^2(U,\mathbb Z)\simeq H^2(U'',\mathbb Z)\oplus \mathbb Z^{11}$ because we blow-up eleven points on $U''.$
 
Let $M$ be a smooth projective fourfold and let $S\subset M$ be a smooth projective surface.  Then 
\begin{equation}\label{h4split}
H^4(\Bl_SM,\mathbb Z)\simeq H^4(M,\mathbb Z)\oplus_{\perp}H^2(S,\mathbb Z)(-1).
\end{equation}
 The homomorphism $H^4(M,\mathbb Z)\to H^4(\Bl_SM,\mathbb Z)$ is induced by the pull-back of $\pi:\Bl_SM\to M$. Letting 
 $E=\p(N^*_{S/M})\to S$ (Grothendieck notation), the homomorphism $H^2(S,\mathbb Z)(-1)\to H^4(\Bl_SM,\mathbb Z)$ is the 
 pull-back from $S$ to $E\subset \Bl_SM$ followed by push-forward via the inclusion of $E$.

We have a commutative diagram:
\begin{equation}\label{blU}
\UseTips
 \newdir{ >}{!/-5pt/\dir{>}}
 \xymatrix{
 &\Bl_{T'}X'=\Bl_{R'}W'\ar[ld]_\sigma\ar[rd]^{\omega}&\\             
 X'=\Bl_{S_{38}}X \ar@{-->}[rr]_{\tau}\ar[d]_\lambda &                             &W'=\Bl_{U}\p^4\ar[d]^\nu \\
X\ar@{-->}[rr]_{\mu} & & \p^4
}
\end{equation}
inducing an isomorphism between 
$$[H^4(X,\mathbb Z)\oplus_{\perp}H^2(S_{38},\mathbb Z)(-1)]\oplus_\perp H^2(T',\mathbb Z)(-1)$$
and 
$$ [H^4(\p^4,\mathbb Z)\oplus_{\perp}H^2(U,\mathbb Z)(-1))]\oplus_\perp H^2(R',\mathbb Z)(-1)$$
respecting the Hodge structures.

For a smooth projective surface  $S$ let  $\mathcal T_S\subset H^2(S,\mathbb Z)$ denote  the trascendental part of the cohomology of $S$. 
Then  $\mathcal T_{R'}=\mathcal T_{T'}=\mathcal T_{S_{38}}=0$ while  $\mathcal T_{U}\simeq \mathcal T_{U''}$ and  $\rk(\mathcal T_{U''})=22-\rk(\Pic(U''))$. From one hand 
$$\mathcal T_{\Bl_{T'}X'}\simeq \mathcal T_{\Bl_{S_{38}}X}\simeq \mathcal T_X=\langle h^2,[S_{38}]\rangle^\perp \subset H^4(X,\mathbb Z)$$
implies $\rk(\mathcal T_{\Bl_{T'}X'})=21$. On the other hand
$$\mathcal T_{\Bl_{R'}W'}\simeq\mathcal  T_{\Bl_{U} \p^4}\simeq\mathcal  T_U\simeq\mathcal  T_{U''}$$
 yields $$\rk(\Pic(U''))=22-\rk(\mathcal T_{U''})=22-\rk(\mathcal T_{\Bl_{R'}W'})=22-21=1.$$ If $H''\subset U''$,  let $h''=[H'']\in H^2(U'',\mathbb Z)$.
Then $h''^\perp=\mathcal T_{U''}\subset H^2(U'',\mathbb Z)(-1)$  and  the previous isomorphisms defined via  \eqref{blU} induce an isomorphism 
$$H^4(X,\mathbb Z)\supset \langle h^2,[S_{38}]\rangle^\perp \stackrel{\simeq}\longrightarrow  \langle h''\rangle^\perp\subset H^4(U'',\mathbb Z)(-1)$$ respecting  Hodge structures. Therefore the K3 surface $U''$ is associated to $X$ in the sense of Hodge Theory according to Hassett, see \cite[Section 3.2]{Levico}.

The isomorphism between $H^4(\Bl_{T'}X',\mathbb Z)$ and $H^4(\Bl_{R'}W', \mathbb Z)$ sends the classes corresponding to the 10   exceptional lines on $S_{38}$ to the classes corresponding to the ten exceptional lines on $U\subset\p^4$ while the rational normal curve of degree four on $U$ correspond to the class of  $H^2(T',\mathbb Z)(-1)$, $T'\simeq \p(\Oo_{\p^1}(4)\oplus\Oo_{\p^1}(4))\to  \p^1$,  not contacted by $\omega$. This gives an  interpretation of the  fact that $\tilde\phi(T')=C=\tilde\psi(R')$ is a rational normal curve of degree four (or, equivalently, that $T'$ admits a section which is a rational normal curve of degree four) and shows that the exceptional curve of degree four $\tilde E\subset U$ is exactly $\nu(R')\cap U$.

\section{A geometric method for detecting the exceptional $(-1)$--curves on some non minimal K3 surfaces}\label{geomet}

We shall now consider the problem of finding the non minimal $K3$ surface $U\subset W$ in the base locus scheme of
a the birational map $\mu:X\map W$ defined in the previous sections for a general cubic $[X]\in \mathcal C_{d}$ with $d=14, 38$ in order
to develop a method to be applied later in the more difficult case $d=42$.
The detection of the $(-1)$-curves on the (smooth non minimal K3) surface $U$ (or on its linear normalization if $U$ has
nodes) is a  delicate and intriguing problem, which in some cases can lead to  the explicit construction of the general K3 surface of degree $2g-2$ and genus $g$. When this is possible,  one usually gets  a direct proof that the corresponding moduli space of polarized K3 surfaces is unirational. In the previous section to analyse  the case $d=38$, following the traditional approach of \cite{DES}, we used the smoothness of $U\subset\p^4=W$ and appealed to the 
Le Barz Formulas together with Adjunction Theory in order to find the ten exceptional lines on $U$. Then, after the contraction of these ten  $(-1)$-curves,  there appeared a last exceptional curve, which is a quartic rational normal curve already contained in $U$.

It is very difficult to try to adapt these arguments to surfaces $U\subset W$ with $W$ a Fano fourfold lying in spaces of higher dimension. So we elaborated an entirely new geometric method, which works efficiently in the cases $d=14, 26, 38, 42$ to prove that $U$ is the blow-up of  a  K3  surface of degree $d$ and genus $g=\frac{d+2}{2}$ {\it associated} to $X\in\mathcal C_d$. Our analysis is based on the existence of the congruence, of the map $\mu:\p^5\map W$ and on a careful study of the known examples studied by Fano in \cite{Fano} (see also \cite{BRS}).

Recall that by definition of congruence of $(3e-1)$-secant curves of degree $e\geq 1$ to $S\subset\p^5$, we have a diagram~\eqref{diagramma congruenza},
where $\pi:\mathcal D\to \mathcal H$
is the  universal family over the parameter space $\mathcal H$ and where  $p:\mathcal D\to\p^5$ is  the tautological morphism,
which by definition is birational, see Subsection \ref{nec}.
The {\it Fundamental Locus of the congruence} 
$$E=\overline{\{q\in\p^5\:\:\dim(p\inv(q))>0\}}\subset\p^5$$
is the base locus of $p\inv$ by Zariski Main Theorem, which also implies that $E$ is  the image of the ramification locus of $p$. The locus $E$ has codimension at least two. Let $E_1,\ldots, E_r$, $r\geq 1$ be the irreducible components of dimension three of $E$, if any.  Suppose there exists the associated rational map $\mu:\p^5\map W$ defined by the linear system $|H^0(\mathcal I_S^e(3e-1))|$ such that a general fiber of $\mu$ is a curve of the congruence and let $C_i=\mu(E_i)\subset W$, $i=1,\ldots,r$. These are special curves in $W$, defined via  the congruence but without any apparent relation with cubic fourfolds through $S$.  

Let us start by describing the surfaces considered in Example \ref{esempiFano} from this perspective.

\begin{ex}\label{esdP5} Let $S=\Bl_{\{p_1,\ldots, p_4\}}\p^2\subset\p^5 $ be a smooth quintic del Pezzo surface. The embedding is given by the linear system of cubics passing through the four points $p_1,\ldots, p_4$, denoted as usual by $|3L-E_1-E_2-E_3-E_4|=|-K_S|$. The pencil of lines through one of the points $p_i$, $i=1,\ldots, 4$, and the pencil of conics through $p_1,\ldots, p_4$ produce five base point free  pencils of conics on $S$.  Moreover, by considering a Cremona transformation centered at three of the four base points we can always find a representation of $S$ as the blow-up at four points in which any of the five  pencil of conics is represented by the pencil of conics through the four base points. 

Let $C\subset S$ be an irreducible conic and  $\Pi=\langle C\rangle=\p^2$ be its linear span. Since $S$ is defined by quadratic equations $\Pi\cap E_i$ is either empty or consists of a point (otherwise $\Pi\cap S$ would contain $C$ and the line $E_i$). Let $\pi:S\to \p^2$ be the blow-up morphism.
Then $C=dL-\sum_{i=1}^4 a_iE_i$ with $0\leq a_i\leq 1$. From $2=3d-\sum_{i=1}^4a_i$, we get $3d=2+\sum_{i=1}^4a_i\leq 6$, yielding either: $d=1$ and $a_i=1$ for only one index $i$; or $d=2$ and $a_i=1$ for every index $i$. In conclusion $C$ belongs to one of the five pencils described above. 

Let $\mu:\p^5\map\p^4$ be the map associated to the congruence of secant lines to $S$. By Proposition \ref{verme} the closure of the fibers of $\mu$ are either secant lines to $S$ or planes cutting $S$ along a conic. Hence the Fundamental Locus $E$ of the congruence of secant lines to $S$ is the locus of the planes spanned by conics through $S$ (through a point of the plane there passes infinitely many secant lines to $S$ and if through a point of a secant line there passes another secant line, these lines span a plane by the analysis of the fibers of $\mu$). 

Let $\pi_1:S\to \p^1$ be a morphism induced by one of the five pencil of conics and let $\pi:S\to \p^2$ be a blow-up morphism in which the pencil is represented by conics through the four base points. Then $\pi_1^*(\Oo_{\p^1}(1))=\Oo_S(2L-E_1-E_2-E_3-E_4)$  and $\pi^*(\Oo_{\p^2}(1))=\Oo(L)$ so that $\pi_1^*(\Oo_{\p^1}(1))\otimes\pi^*(\Oo_{\p^2}(1))=\Oo_S(3L-E_1-E_2-E_3-E_4)$. By the universal property of the product $\alpha=\pi_1\times\pi:S\to E_1=\p^1\times \p^2\subset\p^5$ is such that $\alpha^*(\Oo_{\p^5}(1) )=\Oo_S(1)$. Hence $\alpha$ is an embedding and $S\subset E_1$ is a divisor of type $(1,2)$. In conclusion  we have five  Segre 3-folds $E_i\simeq\p^1\times\p^2$, $i=1,\ldots, 5$ containing $S$ as a divisor of type $(1,2)$ and $E=E_1\cup\cdots\cup E_5$. Then $C_i=\mu(E_i)\subset\p^4$ are five lines because $\mu$ is defined by the linear system of quadrics through $S$ (remark that by restricting $\mu$ to $E_i$ we have $(2,2)-(1,2)=(1,0)$ on $E_i$). 
\end{ex}

\begin{ex}\label{essc4}
If $S\subset\p^5$ is a general quartic rational normal scroll, the Fundamental Locus $E$ of the congruence of secant lines to $S$ consists of the unique Segre 3-fold $\Sigma\simeq \p^1\times \p^2$ containing $S$ as a divisor of type $(0,2)$. Indeed, $S\simeq \p^1\times \p^1$ embedded in $\p^5$ by $\Oo_{\p^1\times\p^1}(1,2)$ so that, reasoning as above,  $S\simeq \p^1\times C\subset \p^1\times \p^2=\Sigma\subset\p^5$ with $C\subset\p^2$ a conic. The conics on $S$ are only the fibers of the first projection $\p^1\times C\to \p^1$. The fibers of  $\mu:\p^5\map W$ with $W\subset\p^5$ a smooth quadric hypersurface are all linear by Proposition \ref{verme}. Let $E\subset\p^5$ be the Fundamental Locus of the congruence of secant lines to $S$. If $q\in E$, then through $E$ there pass at least two secant lines to $S$. So $\Pi_q=\overline{\mu\inv(\mu(q))}$ is a plane cutting $S$ along a conic, yielding $E\subset \Sigma$ and hence $E=\Sigma$. Then  $C=\mu(E)\subset W$ is a smooth conic because on $E$ we have $(2,2)-(0,2)=(2,0)$.
\end{ex}

In general it is not easy  to determine the $E_i$'s and then calculate  their images (although possible in all the examples treated here). We shall now present a  key remark,  which allows us to determine the $C_i$'s without necessarily computing the $E_i$'s in all the known examples we studied until now.

\begin{meth}\label{geometh}
Suppose there exists a congruence of  $(3e-1)$-secant curves of degree $e\geq 1$ to $S\subset\p^5$ with Fundamental Locus $E=E_1\cup\cdots \cup E_r$ with $E_i\subset E$ the irreducible components of dimension three of $E$, if any.   Suppose there exists the associated rational map $\mu:\p^5\map W$ defined by the linear system $|H^0(\mathcal I_S^e(3e-1))|$ such that a general fiber of $\mu$ is a curve of the congruence and let $C_i=\mu(E_i)\subset W$, $i=1,\ldots,r$. Since through a general point of $E_i$ the fiber of $\mu$ has dimension at least two, we expect that $C_i\subset W$ is a curve, see Examples \ref{esdP5} and \ref{essc4}. 

Let  $X_j\subset\p^5$, $j=1,2$, be a general cubic through $S$ and let
$U_j\subset W$ be the associated surface contained in the base locus of the inverse of the restriction of $\mu$ to $X_j$. Then $C_i=\mu(X_j\cap E_i)\subset U_j$ for every $i=1,\ldots, r$ and for $j=1,2$, so that
$$C_1\cup\cdots\cup C_r\subset U_1\cap U_2.$$
Since $U_1$ and $U_2$ are moving surfaces in the fourfold $W$, one expects that $C_1\cup\cdots\cup C_r$ is exactly the one dimensional component $C$ of $U_1\cap U_2$. 

From  the equations of  $U_1$ and $U_2$ we derive immediately those defining  $C$. In the cases under consideration this allows us to verify the smoothness of $C$, that $C=C_1\cup\cdots\cup C_r$ and that all the disjoint irreducible components $C_i$'s are rational (many components are lines) and that $h^1(N_{C/U})=0$.  The knowledge of the equations of  the $U_j$'s yields $p_g(U_j)=1$, $q(U_j)=0$ (at least in the smooth cases) by direct computation. Collecting all the information  one proves that $U$ is the blow-up of a K3 surface as shown by the next crucial remark.

\begin{lema}\label{lemadjunction} Let $U$ be a smooth projective surface with $p_g(U)=1$ and with $q(U)=0$. Let $C=C_1\cup\cdots \cup C_r$, $r\geq 1$,  be a smooth curve on $U$ with $C_i$ a rational curve for every $i=1,\ldots, r$. Then:
\begin{enumerate}
 \item $C_i^2<0$ for every $i=1,\ldots r$.
 \item  Every $C_i$ is a $(-1)$-curve on $U$ if and only if $h^1(N_{C/U})=0$.
\item If $h^1(N_{C/U})=0$ and if there exists an ample divisor $H$ on $U$ such that $H\cdot K_U=H\cdot C$, then $K_U=C_1+\cdots +C_r$ and $U$ is the blow-up at $r$ distinct points of a K3 surface $U'$. 
\item Under the hypothesis in (3), if $\pi:U\to U'$ is the blow-up,  if  $H'=\pi(H)$,  then $$H=\pi^*(H')-\sum_{i=1}^r(H\cdot C_i)C_i.$$
\end{enumerate}
\end{lema}
\begin{proof} From $q(U)=h^1(\mathcal O_{U})=0$ we deduce $h^0(\Oo_{U}(C_i))=h^0(\Oo_{\p^1}(C_i^2))+1$. If $C_i^2\geq 0$, then the rational curves in $|H^0(\Oo_{U}(C_i))|$ would cover $U$ and $U$ would be uniruled contradicting  $p_g(U)=1$. So $C_i^2<0$ for every $i=1,\ldots, r$.

The $C_i$'s are disjoint smooth rational curves,  $\Oo_C(C)_{|C_i}\simeq \Oo_{\p^1}(C_i^2)$ and  
$$h^1(N_{C/U_j})=\sum_{i=1}^r h^0(\Oo_{\p^1}(-2-C_i^2))$$ is equal to $0$ if and only if $C_i^2=-1$ for each $i=1,\ldots, r$, proving (2). 

Since $p_g(U)=1$, the canonical class is effective. From $K_{U}\cdot C_j=-1$ we deduce $K_U=C+D$ with $D\geq 0$. Then $H\cdot C=H\cdot K_U= H\cdot C+H\cdot D$ yields $H\cdot D=0$ and hence $D=0$.  Since the $C_i$'s are disjoint $(-1)$-curves, the contraction of the curves in $C$ produces a smooth surface $U'$ and a morphism $\pi:U\to  U'$ which is the blow-up of $r\geq 1$ distinct points on $U$. Then $K_{U'}=0$ and $q(U')=q(U)=0$ imply that $U'$ is a K3 surface. The last claim about the expression of $H$ via pull-back of $H'$ is now obvious.
\end{proof}
\end{meth}

\begin{ex} ({\bf Application to smooth quintic del Pezzo surfaces in $\p^5$})
Suppose $S\subset\p^5$ is a smooth del Pezzo surface of degree 5. Then $U_j\subset\p^4$, $j=1,2$, are smooth surfaces of degree 9 and sectional genus 8 with  $p_g(U_j)=1$ and $q(U_j)=0$ (the invariants are determined via the explicit equations obtained via the restriction of $\mu:\p^5\map \p^4$). The one dimensional component $C$ of $U_1\cap U_2$ is a smooth curve of degree 5 and arithmetic genus $-4$ from which it immediately follows that $C$ has five irreducible components and hence  is the union of five distinct lines $C_1,\ldots, C_5$, as we already  know. 
   
Let $U=U_1$. After verifying  that $h^1(N_{C/U})=0$, we deduce that every $C_i$ is a $(-1)$-curve on $U$.   Since   $H\cdot K_U=-H^2+2g(H)-2=-9+16-2=5=H\cdot C$ we conclude by Lemma \ref{lemadjunction} that $\pi:U\to U'$ is the blow-up at five distinct points $p_1, \ldots, p_5$ of the K3 surface $U'$ and that  $H'=\pi_*(H)\subset U'$  is a very ample divisor on $U'$ such that
$(H')^2=9+5=14$, $g(H')=8$. Indeed, $H'$ is ample and the generality of $X_1$ and of $U=U_1$ imply that $\rk(\Pic(U'))=1$ (see the argument used  at the end of Subsection \ref{s38} or compute the moduli of the K3's). Hence 
$U'\subset\p^8$ is a $K3$ surface of degree 14 and genus 8 and the linear system $|H|$ on $U$  corresponds to the  hyperplane sections of $U'$ passing through $p_1,\ldots, p_5$. In particular, we see that these points  impose only four independent conditions to hyperplane sections.
\end{ex}

\begin{ex} ({\bf Application to general quartic rational normal scrolls in $\p^5$}) For $S\subset\p^5$ a general rational normal scroll, the corresponding surfaces $U_j\subset W$ are smooth, have degree 10, sectional genus 7, $p_g(U_j)=1$ and $q(U_j)=0$. The one dimensional component $C$ of $U_1\cap U_2$ is a smooth conic, which is a $(-1)$-curve on $U_j$ because $h^1(N_{C/U_j})=0$. Since $H\cdot K_{U_j}=2=H\cdot C$, we  deduce $K_{U_j}=C$ and we can apply Lemma \ref{lemadjunction}. We conclude that $U_j\subset\p^5$ is  
obtained by blowing-up a K3 surface $U'\subset\p^8$ of degree 14 and genus 8 at one point $p\in U'$ and  that $|H|$ corresponds to the linear system of hyperplanes sections having a point of multiplicity at least two  at $p$.
\end{ex}

\begin{ex} ({\bf Application to degree 10 surfaces  $S_{38}\subset \p^5$})
The surfaces $U_j\subset\p^4$, $j=1,2$, are smooth, have degree 12, sectional genus 14, $p_g(U_j)=1$ and $q(U_j)=0$, see Subsection \ref{s38}.
The one dimensional component $C$ of $U_1\cap U_2$ is a smooth curve of degree 14, of arithmetic genus $-10$ with eleven irreducible components.
By projecting $C$ generically to $\p^3$ (to speed computations) and to $\p^2$ (to read more efficiently the decomposition), we get a smooth curve of degree 14, respectively a plane curve $\tilde C$ of degree 14. The lines in $\tilde C$ disappear by taking duality, that is the image of $\tilde C$ via the Gauss map of $\tilde C$. Then, by reflexivity, the bidual curves of $\tilde C$ consists of the irreducible curves in  $\tilde C$ different from lines.
It turns out that  the bidual curve of $\tilde C$ is a quartic curve with three nodes from which it  follows that $C$ is the disjoint union of 10 lines and of a quartic rational normal curve, see also the ancillary file 
\verb!code_section_6.m2!. In conclusion,  $C$ is the union of eleven smooth rational curves. 

The eleven curves $C_i$'s are $(-1)$-curves on $U_j$ because $h^1(N_{C/U_j})=0$, see \verb!code_section_6.m2!. for the computation and then apply Lemma \ref{lemadjunction}. Since $H\cdot K_U=-H^2+2g(H)-2=-12+28-2=14=H\cdot C$, we deduce from Lemma \ref{lemadjunction} that there exists a  contraction  $\pi:U\to U'$  of the eleven $(-1)$-curves $C_i$'s  to the eleven distinct  points $p_1,\ldots, p_{11}$ on the smooth K3 surface $U'$. Then  $H'=\pi_*(H)\subset U'$ is an ample divisor on $U'$ such that
$(H')^2=12+16+10=38$ and  $g(H')=14+6=20$.  For a general $X$ through $S_{38}$ we have that 
$U'$ is a  K3 surface of degree 38 and genus 20 with $\rk(\Pic(U'))=1$ (see the argument used  at the end of Subsection \ref{s38} or compute the moduli of the K3's) so that $|H'|$ gives an embedding $U'\subset\p^{20}$. The linear system $|H|$ on $U$  corresponds to the  hyperplane section of $U'$ passing through $p_1,\ldots, p_{10}$ and having a point of multiplicity at least four at $p_{11}$. In particular, we see that these points do not impose independent conditions to hyperplane sections of $U'$. 
\end{ex}

\section{Rationality of cubics in \texorpdfstring{$\mathcal C_{42}$}{C42} via congruences of 8-secant twisted cubics}\label{C42}  

In this section,
we first construct a family of  surfaces in $\p^5$  to describe  the divisor $\mathcal C_{42}$ as the locus of cubic fourfolds containing these surfaces. This family  is related to 
an example of a surface of degree 9 and sectional genus 2 contained in a del Pezzo fivefold, which has been discovered (computationally) 
for the first time in \cite{HS}.
 Here we give an explicit and geometric description of the complete family of these surfaces 
 inside a del Pezzo fivefold and then use them to construct surfaces in $\PP^5$ of degree 9 and sectional genus 2 with five  nodes.
Finally we use this new description of $\mathcal C_{42}$ to show that 
$\mathcal C_{42}$ is unirational (see Corollary~\ref{uniraz C42}) 
and that every cubic fourfold in $\mathcal C_{42}$  is rational (see Theorem~\ref{C42_Flop}).

\subsection{Birational representations of  del Pezzo fivefolds}\label{birdP5}

A del Pezzo fivefold $V\subset\p^8$ is  a smooth hyperplane section of $\mathbb G(1,4)\subset\p^9$. The following result is well known in classical Algebraic Geometry (see \emph{e.g.} \cite{Todd} and \cite[Section~10]{Semple}).

\begin{Proposition}\label{birVS12} Let $V=\mathbb{G}(1,4)\cap\p^8\subset\p^8$ be a del Pezzo fivefold and let $\Pi\subset V$ be
a plane with class $\sigma_{2,2}$ in $\mathbb{G}(1,4)$. Then:
\begin{enumerate}[(i)]
\item  the projection from $\Pi$ restricted to $V$ induces a birational map $V\map\p^5$, whose base locus scheme is $\Pi;$
\item the inverse map $\p^5\map V\subset\p^8$ is given by the linear system 
of quadric hypersurfaces through a rational normal cubic scroll contained in a hyperplane in
$\p^5$.
\end{enumerate}
\end{Proposition}

\begin{Remark}\label{F1F2}  A del Pezzo fivefold $V=\mathbb{G}(1,4)\cap\mathbb{P}^8\subset\p^8$ contains two distinct families of planes, $\mathcal F_1$ and $\mathcal F_2$,  which  in the Chow ring of $\mathbb{G}(1,4)$ have Schubert
 classes given by $\sigma_{2,2}$ and $\sigma_{3,1}$.
The family $\mathcal F_1$  has dimension 3 and through a point of $V$ there passes a unique plane of the family.   The family $\mathcal F_2$ has dimension 4 and through a general point of $V$ there passes a one dimensional family of these planes.

If  $\alpha:\p^5\map V\subset\p^8$ denotes the birational map 
 defined by the quadrics through a rational normal cubic scroll $\Delta\subset H\simeq\PP^4\subset\PP^5$ 
 as in Proposition~\ref{birVS12}, then
  the planes through a point $q=\alpha(p)$, $p\in\p^5\setminus H$, correspond, respectively, to the unique plane
 generated by $p$ and the directrix line of $\Delta$ and to the planes generated by $p$ and by a line of the ruling of $\Delta$.
  
Since $N_{C/V}\simeq \Oo_{\p^1}(1)^{\oplus 2}\oplus\Oo_{\p^1}(2)^{\oplus 2}$ (see for example \cite[p. 403]{IL}) for an irreducible  conic $C\subset V\subset\p^8$, the Hilbert scheme $\mathcal Con$ of conics contained in $V$ has dimension $10=h^0(N_{C/V})$,
 a fact that can be also deduced from a simple parameter count using the previous birational representation of $V$.
 \end{Remark}

The next result is also classical and well-known. It can also be easily verified via an explicit computation.

\begin{Proposition}\label{birV} Let $V=\mathbb{G}(1,4)\cap\mathbb{P}^8\subset\p^8$ be a del Pezzo fivefold 
and let $C\subset V$ be an irreducible conic such that its linear span $\Pi=\langle C\rangle$ is not contained in $V.$ Then:
\begin{enumerate}[(i)]
\item  the projection from $\Pi$ restricted to $V$ induces a birational map $V\map\p^5$, whose base locus scheme is $C$;
\item the inverse map $\p^5\map V\subset\p^8$ is given by the linear system of cubic hypersurfaces vanishing on a rational scroll in
$\p^5$ of dimension three and degree four, which is a  projection of a smooth quartic rational normal
scroll in $\p^6$.
\end{enumerate}
\end{Proposition}

\subsection{Curves of degree 8 in $\p^5$ with a node and with geometric genus  2  contained in a projected rational scroll of degree four}\label{curves82}
Now we prove some geometrical properties about irreducible curves of degree $8$ and arithmetic genus $3$ in $\p^5$. {We shall restrict 
ourselves to the case of curves with a node, used in the sequel, although the same proof works also for smooth curves (this case has been  considered in  \cite{ST}).}
\begin{Proposition}\label{cidp5}  Let $C\subset\p^5$ be a non degenerate curve of degree 8 and arithmetic  genus $3$ with a node. Then:
\begin{enumerate}[(i)]
\item   the curve $C\subset\p^5$ is the complete intersection of a pencil of quintic del Pezzo surfaces on a Segre 3-fold $\Sigma=\p^1\times\p^2\subset\p^5$ such that the general element of the pencil is smooth.

 \item The curve  $C\subset \p^5$   has ideal generated by seven quadratic forms, defining a birational map $\psi:\p^5\map W\subset\p^6$ onto a quartic hypersurface and such that $\psi(\Sigma)=Q\subset W\subset\p^6$ is a smooth quadric surface contained in the base locus scheme of $\psi^{-1}$. 
 
 \item The preimages on $\Sigma$ of the lines of the two rulings of $Q$ are, respectively, the planes of the ruling of $\Sigma$ and the pencil of del Pezzo surfaces through $C$. 
 
 \item The map $\psi$ is an isomorphism outside $\Sigma\cup\Sec(C)$ and $\Sec(C)$ is mapped onto a degree 24 surface $T$. The quartic hypersurface $W$ has double points along $Q$ and $T$ and $\psi^{-1}:W\map\p^5$ is given by the restriction of a linear system of quartic hypersurfaces in $\p^6$ passing simply through $T$ and having double points along $Q$.
\end{enumerate}
\end{Proposition} 
\begin{proof} Let  $C\subset \p^5$ be an irreducible curve of degree 8  and geometric genus 2 with a node.
Let  $\nu:C'\to C$ be its normalization and let $\mathcal L=\nu^*(\Oo_C(1))$. The linear system $|H^0(\mathcal L)|$ has dimension six by Riemann-Roch,  it is very ample and it embeds $C'$ in $\p^6$ as a smooth curve of degree 8 and genus 2. The curve $C\subset \p^5$ is the projection of $C'$ from a  point $q$ on the secant variety $\Sec(C')\subset\p^6$ of $C'$ but not belonging to the tangential surface $\Tan(C')$.

Let $P'=\langle p'_1,\ldots, p'_4\rangle$, $p'_i\in C'$, be a four secant $\p^3$ passing through $q$ and let $P=\langle p_1,\ldots, p_4\rangle$, with $p_i\in C$ the projection from $q$ of $p'_i$. Then $P$ is a four secant plane to $C$ and,  letting $D=p_1+\cdots+p_4$ be the corresponding Cartier divisor on $C$, a general hyperplane through $P$ cuts $C$ in $D$ and in other 4 points $p_5,\ldots, p_8$ such that $\Pi=\langle p_5,\ldots, p_8\rangle=\p^3$. Letting $E=p_5+\cdots +p_8$, from the previous definitions and from Riemann-Roch we deduce  $h^0(\O_C(D))=2$ and $h^0(\O_C(E))=3$. The linear system $|H^0(\O_C(D))|$ defines a morphism $\xi:C\to \p^1$ of degree 4 while $|H^0(\O_C(E))|$ defines a  morphism $\eta:C\to  \p^2$ birational onto its image. By the universal property of the product we have a morphism $\xi\times\eta:C\to \p^1\times\p^2$ which composed with the Segre embedding $\p^1\times\p^2\subset\p^5$ gives 
a morphism $\epsilon:C\to\p^5$ such that $\epsilon^*(\O_{\p^5}(1))=\O_C(D)\otimes\O_C(E)=\O_C(1)$. This means that the embedding $C\subset\p^5$ factors through $\Sigma\simeq\p^1\times\p^2\subset\p^5$ in such a way that $\xi$ is the composition of $\epsilon$ with the projection onto the first factor and that  
$\eta$ is the composition of $\epsilon$ with  projection onto the second factor. In particular $P=p\times \p^2\subset\Sigma$ for some $p\in\p^1.$ By Riemann-Roch we deduce $h^0(\mathcal I_C(2))\geq 7$ so that $h^0(\mathcal I_{C\cup P}(2))\geq 5$ (from  $P\cap C=\{p_1,\ldots, p_4\}$ we deduce that  to contain $P$  imposes only two conditions to quadrics vanishing on $C$). Since $h^0(\mathcal I_\Sigma(2))=3$, there exists at least a pencil of quadrics $\{Q_{\lambda}\}_{\lambda\in\p^1}$ vanishing  on $C\cup P$ but not on $\Sigma$. This pencil of quadrics cuts $\Sigma$ along $P$ and along a residual pencil of divisor $\{S_{\lambda}\}_{\lambda\in\p^1}$
of type $(1,2)$ containing $C$. The projection from $P$ maps $\Sigma$ onto a plane $\Pi$ and $C$ onto a plane quartic curve.
Hence $C$ is not contained in any divisor of type $(0,2)$ on $\Sigma$ (they project from $P$ onto a conic) and being non degenerate in $\p^5$ is not contained in any divisor of type $(1,1)$ or $(1,0)$ or $(0,1)$, proving that  every  $S_{\lambda}$ is irreducible. Since the complete intersection
of two distinct  divisors of type $(1,2)$ on $\Sigma$ without common irreducible components is a curve of degree 8 and arithmetic genus 3, we conclude that $C$ is the  complete intersection of two  surfaces in the pencil $\{S_{\lambda}\}_{\lambda\in\p^1}.$ { The projection from $P$ restricted to $S_{\lambda}$ resolves to a birational morphism $\sigma_{\lambda}:S_{\lambda}\to\Pi$. The planes in  $\Sigma$ cuts on  $S_{\lambda}$ a pencil of conics, whose image by $\sigma_{\lambda}$ is a pencil of conics having a base locus scheme $Z_{\lambda}\subset\Pi$ of length 4.}
{The inverse map $\sigma_{\lambda}\inv:\Pi\map S_{\lambda}$ is given by the linear system of cubics vanishing at $Z_{\lambda}$.  Let $\tilde C\subset\Pi$ denote the projection of $C$ from $P$. Clearly $Z_{\lambda}\subset \tilde C$ and the divisors $\{Z_{\lambda}\}_{\lambda\in\p^1}$ vary in a $g_4^1$ on $\tilde C$. Moreover, the pencil of conics through a fixed  $Z_{\lambda}$ cuts on $\tilde C$ the $g_4^1=|H^0(\O_{\tilde C}((\sigma_{\lambda})_*(D)))|$. On the contrary, fixed a general divisor $\tilde D_\mu$ in the last $g_4^1$, the pencil of conics through $\tilde D_\mu$ cuts on $\tilde C$ the  $g_4^1=\{Z_{\lambda}\}_{\lambda\in\p^1}$. Hence for general $\lambda\in\p^1$ the scheme $Z_{\lambda}$ is smooth by Bertini Theorem and the corresponding $S_{\lambda}\subset\p^5$ is a smooth quintic del Pezzo surface.} Thus,  part (i) is proved.\\
Let $C=S\cap S'\subset \Sigma$ be a curve of degree 8 and arithmetic genus 3, complete intersection of a pencil $\{S_{\lambda}\}_{\lambda\in\p^1}$ of divisors of type $(1,2)$ on $\Sigma$, whose general member is smooth.  Arguing as in \cite[pp. 435--436]{HKS}, an iterated use of the Mapping Cone and the knowledge of the resolution of a $S_{\lambda}\subset\p^5$ yields that $C\subset\p^5$ has ideal generated by seven quadratic equations (more precisely we can obtain the complete free resolution of the ideal of $C\subset\p^5$, see {\it loc. cit.}). 
Let $\psi:\p^5\map\p^6$ be the map defined by $|H^0(\mathcal I_C(2))|.$  The image $W=\overline{\psi(\p^5)}\subset\p^6$ is a quartic hypersurface, see \cite{ST} and \cite{HKS}. The restriction of $\psi$ to $\Sigma$ is a linear system of dimension three because $\Sigma$ is defined by three quadratic equations vanishing on $C$. Let $S\subset \Sigma$ be a quintic del Pezzo surface through $C$ and let $\pi:S\simeq\Bl_{\{q_1,\ldots,q_4\}}\p^2\to \p^2$ be a birational representation such that the pencil of  conics  on $S$ given by the restriction of the projection onto the first factor of  $\Sigma$ is mapped on $\p^2$ to the pencil  of conics through $q_1,\ldots, q_4$. Then $\pi(C)$ is a quartic curve passing through $q_1,\ldots, q_4$. Hence  the free part of the restriction of $|H^0(\mathcal I_C(2))|$ to $S$ is given by the pencil of strict transforms of conics passing through $q_1,\ldots,q_4$ and $\psi(S)$ is a line contained in $Q=\overline{\psi(\Sigma)}\subset W$. In particular, $Q\subset \p^3\subset\p^6$ is a surface.  Since each plane $\Pi\subset\Sigma$ cuts $C$ in four points, $\psi$ maps $\Pi$ onto a line in $Q$ cutting $\psi(S)$ in the point corresponding to $\psi(\Pi\cap S)$. In conclusion $Q\subset \p^3$ is a smooth quadric surface and all the claims in  (ii) and  in  (iii) are proved.  A finer  analysis of the map $\psi$, as in  \cite[Section 4]{ST} or in \cite[Section 3, Lemma 3.3, Lemma 3.4]{HKS}, leads to the description of all the positive dimensional fibers of $\psi$ and of all the properties listed in part (iv). We refer to \cite[p. 208]{ST} for the details on these computations.
\end{proof}

The following result will play a crucial role in our geometric constructions together with the description of the map $\psi$ defined  above.
\begin{Proposition}\label{CB5} Let $B\subset\p^5$ be a rational scroll of dimension 3 and degree 4 which is a  general projection of a smooth quartic rational normal scroll $B'\subset\p^6$. Then:
\begin{enumerate}[(i)]
\item there exists an irreducible   family $\mathcal F$ of dimension 15 of curves of degree 8 and geometric genus 2  on $B$, whose general member is nodal.
\item There exists an irreducible  family $\mathcal D$ of dimension 16 of quintic del Pezzo surfaces in $\p^5$, whose general member is smooth and cuts $B$ along a general curve of the family $\mathcal F$. 
\end{enumerate}
\end{Proposition} 
\begin{proof} Let  $C'\subset\p^6$ be a smooth curve of degree 8 and genus 2. The projection of  $C'\subset\p^6$ from four general points $p'_1,\ldots, p'_4$ on it
 is a quartic plane curve $\tilde C\subset\p^2$ with a node $\tilde p_5$. Let $\tilde p_1,\ldots, \tilde p_4\in \tilde C$ be the images of $p'_1,\ldots, p'_4$. The linear system of quartics curves through $\tilde p_1,\ldots, \tilde p_4$ and having double points at $\tilde p_5$  embeds the blow-up of $\p^2$  at $\tilde p_1,\ldots,\tilde p_5$ as a smooth surface $S\subset\p^7$ of degree  8 and genus 2 having  $C'\subset\p^6$ as a hyperplane section. The linear system $|H^0(\mathcal I_{\{\tilde p_1,\tilde p_2, \tilde p_3,\tilde p_5\}}(2))|$ yields a morphism $\phi_1:S\to \p^1$ while $ |H^0(\mathcal I_{\{\tilde p_4,\tilde p_5\}}(2))|$ yields a morphism $\phi_2:S\to \p^3$. The composition of the morphism $\phi_1\times\phi_2:S\to \p^1\times\p^3$ with the Segre embedding is given by the linear system embedding $S$ in $\p^7$. Hence $S\subset \p^1\times \p^3\subset\p^7$ and $C'\subset\p^6$ is contained in a smooth quartic rational normal scroll $B'\subset\p^6$, which is a hyperplane section of $\p^1\times \p^3$.

Let  
$C'\subset B'$ be a curve  of degree 8 and genus 2.  Then $-K_{B'}\cdot C'=18$,  $\deg(N_{C'/B'})=20$
by Adjunction Formula and $\chi(N_{C'/B'})=20+2(1-g(C'))=18$. Since in an explicit example we verified that $h^0(N_{C'/B'})=18$ and that $h^1(N_{C'/B'})=0$,  the general curve of degree 8 and genus 2 contained in $B'$ belongs to a unique irreducible and generically smooth component of dimension 18 of the corresponding Hilbert scheme.  The family of the secant varieties $\Sec(C')$ with $C'\subset B'$ has dimension $21=18+3$, so through a general point  $q\in \p^6$ there passes a family of dimension 15  of such secant varieties. 
The projection from  $q$ of the corresponding curves  produces a  15-dimensional family of nodal curves of degree 8 and geometric genus 2  on the scroll $B$, projection of $B'$ from $q$. Since one verifies that $h^0(N_{C/B})=15$ (a fact which can be easily computed with \emph{Macaulay2}),   the family $\mathcal F$ of such curves is irreducible, generically smooth and of  dimension 15.

By Proposition \ref{cidp5}, a nodal curve  $C\subset B\subset\p^5$ of degree 8 and geometric genus 2 with a node  is the complete intersection of a pencil of del Pezzo surfaces on a Segre threefold $\Sigma\simeq\p^1\times \p^2\subset\p^5$. Let $C\subset S=S_{\lambda}\subset\Sigma$ with $\lambda\in \p^1$ general and let $\pi:S=\Bl_{\{q_1,\ldots,q_4\}}\p^2\to\p^2$ be
the blow-up morphism. Then we can assume that $\pi(C)=\tilde C\subset\p^2$ is a plane quartic curve with a node passing simply through $q_1,\ldots, q_4$, see the proof of Proposition \ref{cidp5}.
The irreducible threefold $B$ has ideal generated by one quadratic form  and by three cubic forms so that it is scheme theoretically defined by nine cubic forms. To study the scheme theoretic intersection $B\cap S$ we restrict to $S$ the linear system $|H^0(\mathcal I_B(3))|$. Each cubic in this linear system cuts $S$ along a curve $D$ containing $C$ and such that  $\pi(D)=\tilde D$ is a curve of degree nine  having triple points at  $q_1,\ldots, q_4$. Hence $\tilde D=\tilde C+\tilde A$ with $\tilde A$ a quintic curve with double points at $q_1,\ldots, q_4$. The linear system $|\tilde A|$  has dimension eight, it is base point free and very ample, proving that $B\cap S=C$ as schemes for $[C]\in \mathcal F$ general. The linear system  $|\tilde A|$ is equivalent via a quadratic standard Cremona transformation centred at $q_2,q_3, q_4$ to the linear system of quartic curves having a double point at $q_1$ and  simple base points at $q_2,q_3,q_4$. 

 By varying $C$ and recalling that any $C$ is a complete intersection of a pencil of quintic del Pezzo surfaces inside $\Sigma$,   we get a family $\mathcal D$ of dimension 16 of quintic del Pezzo surfaces cutting $B$ scheme theoretically along a nodal curve of degree 8 and genus 2 as above. 
\end{proof}

\subsection{A rational surface of degree 9 and sectional genus 2 with 5 nodes contained in a general cubic fourfold of \texorpdfstring{$\mathcal C_{42}$}{C42}}\label{sez C42 nuova}

We can now construct a 25-dimensional family of smooth surfaces of degree 9 and sectional genus 2 on a del Pezzo fivefold $V\subset\p^8$. This was also achieved in \cite[Lemma~3.1]{HS} via a different construction 
of an explicit example that corresponds to a smooth point in $\mathrm{Hilb}_{V}$ and
which is related to  a K3 surface of genus $11$ in $\PP^{11}$.

\begin{Proposition}\label{S42V} Let $V\subset\p^8$ be a  del Pezzo fivefold. 
\begin{enumerate}[(i)]
\item There exists an irreducible and generically smooth 
family $\mathcal S\subset\mathrm{Hilb}_{V}$ of  dimension 25 of surfaces $S\subset V$ of degree 9 and sectional genus 2, whose general member is  smooth.
\item There exists an irreducible  and generically smooth  family $\mathcal S_{42}\subset\mathrm{Hilb}_{\p^5}$ of dimension 48 of surfaces  in $\p^5$ of degree 9 and sectional genus 2 with 5 nodes, whose general member is  
obtained as the projection from a general plane of the family $\mathcal F_1\subset\mathrm{Hilb}_{V}$
of a general surface of the family $\mathcal S\subset\mathrm{Hilb}_{V}$.
\end{enumerate}
\end{Proposition}
\begin{proof} Let $C\subset V$ be an irreducible conic whose linear span $\Pi=\langle C\rangle$ is not contained in the del Pezzo fivefold $V$, and let 
\begin{equation}\label{mappaAlpha}
\alpha:\p^5\map V\subset\p^8
\end{equation}
be the inverse of the projection from $\Pi$ (see Proposition \ref{birV}).
  Let $\mathcal D$ be the family of quintic del Pezzo surfaces described in Proposition~\ref{CB5}, 
  and let $D\in \mathcal D$ be a general member. The smooth surfaces $S=\alpha(D)\subset V\subset\p^8$ have degree 9 and sectional genus 2 and are obtained from $\p^2$ via the linear system of quintics having four double points (or equivalently via the linear system of quartics with a double point and three simple base points) as shown in the proof of Proposition \ref{CB5}. Since  the Hilbert scheme $\mathcal Con$ of conics in $V$ has dimension 10 (see Remark \ref{F1F2}) and since the conics on  such a surface $S\subset\p^8$ belong to the unique  pencil of conics on $S$ (represented on the plane by the conics through the four base points of the linear system), we deduce that the above surfaces describe a family $\mathcal S\subset\mathrm{Hilb}_V$ of dimension at least
$\dim(\mathcal Con)-1+\dim(\mathcal D)=10-1+16=25$. 
Since we have verified in a specific example of surface $[S]\in\mathcal S$ that $\h^0(N_{S/V})=25$,  we deduce that the family $\mathcal S$ is generically smooth of dimension 25.

 Let $\beta:V\map \p^5$ be the birational map induced by the projection from a plane $P\subset V$ of the family $\mathcal F_1$, see Proposition \ref{birV}.
The image via $\beta$ of a general $S\in \mathcal{S}$ is a surface of degree 9, sectional genus 2, cut out by 9 cubics and  having 5 nodes produced by the intersection
 of $P$ with the secant variety to $S$, a fact which can be  verified by a direct computation via \emph{Macaulay2} by following the first steps of the algorithm  described in Remark \ref{algorithm S42} below  (see also  Subsection \ref{contid42}). 
 
 A surface in $\p^5$ of degree 9, sectional genus 2 and with five nodes of the type constructed above will be denoted by $S_{42}$. Since the cubic rational normal scrolls in $\p^4$ depend on 18 parameters, we deduce that the cubic rational normal scrolls in $\p^5$ depend on 23 parameters.
Each rational normal scroll in $\p^5$ determines a map $\beta$ and hence a plane $P\subset V\subset\p^8$. The projection from $P$ of the 25 dimensional family of surfaces of degree 9 and sectional genus 2 contained in $V$ gives a 25 dimensional family of $S_{42}$. By varying the scroll we deduce that
the surfaces $S_{42}\subset\p^5$ describe a family of dimension at least 48=23+25. 
Since we have verified in a specific example of surface $S_{42}\subset\p^5$ that $\h^0(N_{S_{42}/\p^5})=48$,
 we can deduce that  $\h^0(N_{S_{42}/\p^5})=48$ for a general $S_{42}\subset\p^5$ as above. Since we previously proved that these surfaces depend on at least 48 parameters, we conclude that there exists a unique irreducible component $\mathcal S_{42}$ of the corresponding Hilbert scheme containing the surfaces $S_{42}\subset\p^5$, which is generically smooth of dimension  $\dim(\mathcal S_{42})=48$ and such  that the general element of $\mathcal S_{42}$ is of the kind described above.
\end{proof}

\begin{Remark}\label{cs42}  
Let $V=\mathbb{G}(1,4)\cap\p^8\subset\PP^8$ be a  del Pezzo fivefold, 
let $\mathcal F_1\subset \mathrm{Hilb}_{V}$ be the $3$-dimensional family of planes in $V$ with class $\sigma_{2,2}$,
and let 
$\mathcal S\subset\mathrm{Hilb}_{V}$
and 
$\mathcal S_{42}\subset\mathrm{Hilb}_{\p^5}$ 
be, respectively,  the $25$-dimensional family of smooth surfaces in $V$ 
 of degree $9$ and sectional genus $2$, 
and 
the $48$-dimensional family of $5$-nodal surfaces in $\PP^5$
of degree $9$ and sectional genus $2$ constructed in Proposition~\ref{S42V}.
To a general pair $([S],[P])\in \mathcal S\times \mathcal F_1$, we associate a general $[S_{42}]\in\mathcal S_{42}$
defined by $S_{42} = \overline{\beta_P(S)}$, where $\beta_P:V\dashrightarrow\PP^5$
denotes the projection from $P$.
The inverse map $\alpha_{\Delta}=\beta_P^{-1}:\PP^5\dashrightarrow V$
is defined by the quadrics through a rational normal cubic scroll 
$\Delta\subset\PP^4\subset\PP^5$,
which intersects $S_{42}\subset\PP^5$ in a curve $C\subset\PP^5$ of degree $9$ and arithmetic genus $7$.
We now illustrate how one can determine this scroll $\Delta$ from 
the surface $S_{42}$,
and thus determine the pair $(S,P)$ via $S=\overline{\alpha_{\Delta}(S_{42})}$ and $P = \mathrm{Bs}(\alpha_{\Delta}^{-1})$.
 Let $\varphi:\p^5\map Z\subset\p^{8}$ be the rational map defined 
by the linear system $|H^0(\mathcal I_{S_{42}}(3))|$ 
of cubic hypersurfaces through the surface $S_{42}\subset\PP^5$.
Then  $\varphi$ 
is a birational map onto its image $Z\subset\p^{8}$ and 
the base locus of the inverse map $\varphi^{-1}:Z\dashrightarrow\PP^8$ 
is an irreducible surface $T$ with an immersed point $q\in T$.
Then $\overline{\varphi^{-1}(T)}$ is a threefold ruled by trisecant lines to $S_{42}$
while $\overline{\varphi^{-1}(q)}$ coincides with the scroll $\Delta$ intersecting $S_{42}$ along a curve of degree 9 and arithmetic genus $7$.\footnote{The idea of gluing a cubic scroll $\Delta\subset\PP^5$ with another surface $B\subset\PP^5$
along some curve $\Delta\cap B$ and then of  considering the image $\overline{\alpha_{\Delta}(B)}\subset V$  has been systematically applied in \cite{StaDP5} to construct other types of surfaces in a del Pezzo fivefold.} The previous description of $\phi$ assures that a general $S_{42}\subset\p^5$ satisfies Assumption~\ref{assumption1} in Subsection \ref{assdef} and that it has the expected trisecant behaviour.
\end{Remark}

\begin{Remark}\label{algorithm S42} 
 Our construction of the surface $S_{42}$ 
 can be easily implemented and executed in a computer program such as \emph{Macaulay2.}    For the convenience of the reader, we now summarize the algorithm for the construction
  of the general surface in the family $\mathcal S_{42}$ (see also Subsection~\ref{contid42}).
\begin{itemize}
\item  
Let  $p_1,\ldots, p_5 \in \p^2$ be general points. The image 
 of the rational map defined 
 by the linear system $|H^0(\mathcal I_{\{p_1^2,p_2,p_3,p_4,p_5\}}(4))|$
 of plane quartic curves having a double point at $p_1$ and simple base points at $p_2,p_3,p_4,p_5$ 
 is a smooth surface $T\subset\PP^7$ of degree $8$ and sectional genus 2.
 This surface $T$ can be embedded into 
 $\PP^1\times\PP^3\subset\PP^7$ 
  via the product of linear systems
 $|H^0(\mathcal I_{\{p_1,p_3,p_4,p_5\}}(2))|\times |H^0(\mathcal I_{\{p_1,p_2\}}(2))|$. 
 \item 
  Let $L$ be a general secant line to $T$, $H\supset L$ a general hyperplane through $L$,
  and $p\in L$ a general point. Then the projection of $C'=H\cap T\subset H=\PP^6$ to $\PP^5$ from $p$ 
   yields a one-nodal curve $C\subset\PP^5$ of degree 
  $8$ and arithmetic genus $3$ contained in a singular quartic scroll threefold $B\subset\PP^5$, projection
  from $p$ of $B'=\p^1\times \p^3\cap H$.

  \item 
  The quadrics through $C$ define a birational map $\psi$ from $\PP^5$ into a 
  quartic hypersurface  $W\subset\PP^6$. 
  The exceptional locus of $\psi$ contains  
  a Segre threefold $\Sigma\simeq\PP^1\times\PP^2\subset\PP^5$, which is sent into a smooth quadric surface $Q\subset\PP^6$. The quadric $Q\subset W$ can be detected as the unique irreducible component of the base locus scheme of $\psi\inv$ along which the base locus scheme is not generically reduced.

  \item 
  The inverse images via $\psi$ of the two  lines of $Q$ passing 
  through a general point of $Q$ are: a plane of the ruling of $Z$
  and a smooth quintic del Pezzo surface $D\subset\PP^5$, which intersects $B$ along $C$. (Note that $\Sigma$ and the quadric $Q=\overline{\psi(\Sigma)}$ are rational over their field of definition. Indeed,  
  the syzygy matrix of the $3$ quadrics defining a Segre threefold $\Sigma\subset\PP^5_{K}$
  is a $2\times3$ matrix of linearly independent linear forms on $\PP^5_{K}$. 
  These $6$ linear forms can be used to define an automorphism of $\PP^5_{K}$ 
  that sends $\Sigma$ into the Segre embedding of $\PP^1_{K}\times\PP^2_{K}$ in $\PP^5_{K}$.)
  
  \item 
   Finally, the 
   map $\alpha:\PP^5\dashrightarrow V$ defined in \eqref{mappaAlpha} induces 
   an isomorphism 
  between $D$ and 
   a smooth surface $S$ of degree $9$ and 
   sectional 
   genus $2$.  The map  $\beta:V\dashrightarrow\PP^5$ defined in Proposition \ref{birVS12} induces a birational morphism
   from $S$
   to our surface $S_{42}$.
   \end{itemize}

\end{Remark}

We are now ready to prove that a general cubic in $\mathcal C_{42}$ contains a general surface $S_{42}\subset\p^5$ of degree 9 and genus 2 with five nodes in the irreducible family $\mathcal S_{42}$ described above.

\begin{thm}\label{C42S42}
 The irreducible divisor $\mathcal{C}_{42}$ parameterizing cubic fourfolds of discriminant $42$ 
 coincides with the closure of the locus of  cubic fourfolds containing a rational surface 
 $S_{42}$ of degree $9$ and sectional genus $2$ with $5$ nodes of the irreducible family $\mathcal S_{42}$ constructed above. 
 \end{thm}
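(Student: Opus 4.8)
The plan is to establish Theorem \ref{C42S42} in two directions: first that every cubic fourfold containing a surface $S_{42}$ of the described type is special of discriminant $42$, and second that the locus of such cubics is dense in the full divisor $\mathcal C_{42}$. For the first direction I would compute the relevant intersection data on a smooth cubic $X$ containing a smooth-enough $S_{42}$. The node-counting and the construction already give $\deg(S_{42})=9$, sectional genus $2$, and five nodes; from $h^2\cdot h^2 = 3$ and $h^2\cdot S_{42}=9$, the double point formula of \cite{fulton-intersection} yields the self-intersection $S_{42}^2=41$ inside $X$, whence the discriminant of the rank-two lattice $\langle h^2, S_{42}\rangle$ equals $3\cdot 41 - 81 = 42$. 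This places $[X]$ in $\mathcal C_{42}$, and since $42$ is one of the admissible values defining an irreducible divisor, membership in the saturated Hodge-theoretic lattice pins down the component.

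Second, and this is the crux, I must show the inclusion is an equality of divisors, i.e. that a \emph{general} $[X]\in\mathcal C_{42}$ actually contains such an $S_{42}$. The strategy is a dimension count on the incidence variety
\begin{equation*}
\mathbf C_{42}=\{([S],[X]) : S\subset X\}\subset \mathcal S_{42}\times\mathcal V,
\end{equation*}
together with a surjectivity criterion. First I would verify that $\mathcal S_{42}$, the Hilbert scheme component of the surfaces $S_{42}$, is generically smooth of dimension $48$; this follows from the explicit construction (projecting the $25$-dimensional family inside the del Pezzo fivefold $V$ from the $23$-dimensional family of cubic scrolls) giving the lower bound $\dim\mathcal S_{42}\geq 48$, matched against the normal bundle computation $h^0(N_{S_{42}/\p^5})=48$ carried out by Macaulay2 on a general example. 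Combined with $h^0(\mathcal I_{S_{42}}(3))=9$, the fiber dimension of the first projection is $8$, so $\dim\mathbf C_{42}=48+8=56$.

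The key step is then to bound $\dim\pi_2(\mathbf C_{42})$ from below by $54$ to conclude $\pi_2$ dominates $\mathcal C_{42}$, which has dimension $54$. Since the general cubic fourfold contains no $S_{42}$, the image is a proper subvariety of $\mathcal V$, hence of dimension at most $54$; the reverse bound comes from showing the general fiber of $\pi_2$ has dimension exactly $2$, because then $\dim\pi_2(\mathbf C_{42})=56-2=54$, forcing the image to be all of the irreducible divisor $\mathcal C_{42}$. The main obstacle, and where the argument becomes delicate rather than formal, is controlling this fiber dimension: one has the inequality $h^0(N_{S/X})\geq \dim_{[S]}\pi_2^{-1}([X])$, so it suffices to verify $h^0(N_{S_{42}/X})=2$ for one fixed pair $(S_{42},X)$ with $X$ smooth. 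I would carry this out as an explicit normal bundle cohomology computation via Macaulay2, following the template of \cite[pp.~284--285]{Nuer}; the subtlety is that the upper semicontinuity argument only transfers a computation at a single point to the general point once one knows the component $\mathcal S_{42}$ is itself irreducible and the chosen $S_{42}$ is general within it, which is exactly why the generic smoothness of $\mathcal S_{42}$ established above is indispensable. Putting these together—the lattice/discriminant computation for one direction and the incidence-variety dimension count with the pointwise $h^0(N_{S/X})=2$ check for the other—yields that $\mathcal C_{42}$ coincides with the closure of the locus of cubic fourfolds containing an $S_{42}$.
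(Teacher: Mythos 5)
Your proposal reproduces the paper's own proof essentially step for step: the same double point formula computation giving $S_{42}^2=41$ and lattice discriminant $3\cdot 41-81=42$, the same $48+8=56$-dimensional incidence variety $\mathbf C_{42}\subset\mathcal S_{42}\times\mathcal V$ with $\dim\mathcal S_{42}=48$ certified by the $23+25$ parameter count and the Macaulay2 check $h^0(N_{S_{42}/\p^5})=48$, and the same Nuer-style single-point verification $h^0(N_{S_{42}/X})=2$ forcing $\dim\pi_2(\mathbf C_{42})=54$ and hence dominance onto the irreducible divisor $\mathcal C_{42}$. The only deviation is your closing caveat that semicontinuity requires the chosen $S_{42}$ to be general in an irreducible $\mathcal S_{42}$ — this is actually superfluous, since the inequality $\dim(\pi_2^{-1}([X]))\geq 56-\dim(\pi_2(\mathbf C_{42}))$ holds at \emph{every} point of the image, so the computation at a single fixed pair already caps the generic fiber dimension exactly as the paper argues.
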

\begin{proof} Let $h$ be the class of a hyperplane section of $X$, let $h^2$ be the class of  2-cycles $h\cdot h$ and remark that $h^2\cdot h^2=h^4=3$ and $h^2\cdot S_{42}=9$. The double point formula for $S_{42}\subset X$  (see \cite[Theorem 9.3]{fulton-intersection}) yields $S_{42}^2=41$ and that the restriction of the intersection form to  $\langle h^2, S_{42}\rangle$ has discriminant $3\cdot 41-81=42$.
Let 
$\mathcal V\subset |H^0(\mathcal O_{\p^5}(3))|=\p^{55}$ be the open
set corresponding to smooth cubic hypersurfaces.  
We verified that    $h^0(\mathcal I_{S_{42}}(3))=9$ for a general $[S_{42}]\in\mathcal S_{42}$ and that there exists a smooth
cubic hypersurface through $S_{42}$. Therefore the locus
$$ \mathbf C_{42}=\{([S],[X])\;:\; S\subset X\}\subset\mathcal S_{42}\times\mathcal V,$$ 
has dimension $48+8=56$. The image of $\pi_2:\mathbf C_{42}\to \mathcal V$ has dimension at most 54 because the general cubic fourfold does not contain any surface belonging to $\mathcal S_{42}$.  For every $[X]\in\mathcal \pi_2(\mathbf C_{42})$ we have
$$\dim(\pi_2^{-1}([X]))\geq \dim(\mathbf C_{42})-\dim(\pi_2(\mathbf C_{42}))=56-\dim(\pi_2(\mathbf C_{42}))\geq 56-54=2.$$
Since $h^0(N_{S/X})\geq \dim_{[S]}(\pi_2^{-1}([X]))$ for every $[S]\in \pi_2^{-1}([X])$, where  $\dim_{[S]}(\pi_2^{-1}([X]))$ denotes the dimension
of $\pi_2^{-1}([X])$ at the point $[S]$, to show that a general $[X]\in\mathcal C_{42}$ contains a surface $S_{42}$
it is sufficient to  verify that $h^0(N_{S_{42}/X})=2$ for a fixed $S_{42}$ and for a smooth $X\in|H^0(\mathcal I_{S_{42}}(3))|$, see also  \cite[pp.~284--285]{Nuer} for a similar argument. We verified this  via {\it Macaulay2} in an explicit example and  we can conclude that a general $[X]\in\mathcal C_{42}$  contains a surface $S_{42}$ as above. 
\end{proof}

Since at each step of the algorithm summarized in Remark~\ref{algorithm S42}  we need to introduce only new independent variables,
we deduce that the family $\mathcal S_{42}$ is unirational. 
In other words, our construction yields an explicit dominant rational map
$\PP^{N}\dashrightarrow \mathcal S_{42}$.
As an immediate consequence of this and of Theorem~\ref{C42S42}, we have the following:
\begin{cor}\label{uniraz C42} The irreducible divisor $\mathcal C_{42}$ is  unirational.
\end{cor}

\begin{Remark}\label{rem4.2} 
 A different description of the divisor $\mathcal{C}_{42}$ had been given 
 in \cite{Lai} as the locus of  cubic fourfolds containing 
 a rational scroll of degree $9$ with $8$ nodes. From this Lai deduces that
 $\mathcal C_{42}$ is uniruled. This has been substantially refined in 
 \cite[Theorem 1.1]{FarkasVerraC42}, where the authors prove that the universal K3 surface
 of genus 22 is unirational. This implies the unirationality of the 19-dimensional
 moduli space $\mathcal F_{22}$ of polarized K3 surfaces of genus 22 and hence the unirationality
 of $\mathcal C_{42}$ by a result of Hassett, see for example \cite[Corollary 25]{Levico}. 
\end{Remark}

\subsection{Rationality of cubics fourfolds in \texorpdfstring{$\mathcal C_{42}$}{C42}}\label{ratC42}

The new description of the general cubic fourfolds in $\mathcal C_{42}$ as those containing a general $[S_{42}]\in\mathcal S_{42}$ allows us to deduce the rationality of a general element of $\mathcal C_{42}$ and then, by applying  \cite[Theorem 1]{KontsevichTschinkelInventiones}, of all cubics in $\mathcal C_{42}$.  In the next subsection we shall also put in evidence, via the trisecant flop, the birational connection between  a general pair $(X,S_{42})$ with $S_{42}\subset X$ and its associated K3 surface of degree 42 and of genus 22.

\begin{thm}\label{C42_Flop} Every cubic fourfold in $\mathcal C_{42}$ is rational.
\end{thm}
\begin{proof}Since we have given an algorithm for computing the general surface $S_{42}\subset\PP^{5}$ of the family $\mathcal S_{42}$ 
(see Remark~\ref{algorithm S42}),
we can explicitly construct it using \emph{Macaulay2} and study its geometrical properties.
So let
$\phi:\p^5\map Z\subset\p^{8}$ be the rational map defined 
by the linear system $|H^0(\mathcal I_{S_{42}}(3))|$ of cubic hypersurfaces 
through a general surface $S_{42}\subset\PP^5$ of the family $\mathcal S_{42}$.
One can calculate that the  map $\phi$ is birational onto its image $Z\subset\p^8$, which  has degree 14, sectional genus 15 and ideal generated by 7 cubic forms. Through a general point $q=\phi(p)$ there passes 17 lines contained in $Z$, whose preimages via $\phi$ provide: nine secant lines to $S_{42}$ through $p$, seven 5-secant conics to $S_{42}$ through $p$ and one 8-secant twisted cubic to $S_{42}$ through $p$. Then we  conclude that  $S_{42}$ admits a congruence of 8-secant twisted cubics. Once we have determined the congruence we have also another way of detecting it. Indeed, the linear system $|H^0(\mathcal I^3_{S_{42}}(8))|$ of octic hypersurfaces
with triple points along $S_{42}$
defines a dominant rational map $\mu:\p^5\map W\subset\p^7$ onto a smooth linear section $W$ of $\mathbb G(1,4)\subset\p^9$. The general fiber of $\mu$ is a twisted cubic 8-secant to $S_{42}$ so that the restriction of $\mu$ to a general cubic fourfold $X$ through $S_{42}$ induces a birational map $\mu|_X:X\dashrightarrow W$.  Since
a general cubic fourfold in $|H^0(\mathcal I_{S_{42}}(3))|$ 
through a general surface in $\mathcal S_{42}$ is rational, 
we conclude that 
every cubic fourfold of discriminant $42$ is rational 
by the main result in \cite{KontsevichTschinkelInventiones}.
 We refer to Subsection \ref{contid42} for more details on the above calculations.
\end{proof}

\subsection{Birational model of the associated K3 surface of degree 42 and genus 22 via the trisecant flop}\label{TF42}  A general surface $S_{42}\subset\p^5$ satisfies
Assumption~\ref{assumption1} in Subsection \ref{assdef} and it has the expected trisecant behaviour, see the end of Remark \ref{cs42}. 
 By Theorem \ref{trisflop} the map $\mu$ restricted to a general cubic $X\subset\p^5$ through $S_{42}$
determines a trisecant flop $\tau:X'=\Bl_{S_{42}}X\map W'$ with $W'$ a smooth fourfold. By Theorem \ref{C42_Flop} and by Theorem \ref{contraction}, the congruence of 8-secant twisted cubics to $S_{42}$ induces a birational morphism $\nu:W'\to W$, which is the  blow-up of a  surface $\overline B\subset W\subset \p^7$. 

By studying the birational map $\mu:X\map W$ (see Subsection~\ref{contid42}), we obtain
that $\overline B\subset W\subset\p^7$ is a smooth surface of degree 21 and sectional genus 18. Then $\overline B$ coincides with its support $U$, which is thus a smooth surface of degree 21 and sectional genus 18 with $p_g(U)=1$ and $q(U)=0$.  
We now apply the geometric method developed in Section~\ref{geomet} to deduce that $U$ is the blow-up of a K3 surface at nine distinct points and to describe the linear system giving the embedding.

Considers two surfaces $U_j\subset W$, $j=1,2$ associated via $\mu$ to two general $X_j$'s  through $S_{42}$. 
The one dimensional component $C$ of $U_1\cap U_2$ is a smooth curve of degree 13, of arithmetic genus $-8$ with 9 irreducible components (a general projection of $C$ to $\p^3$ allows a quick verification of all these properties), yielding that 
$C=C_1\cup\cdots\cup C_9$ is the disjoint union of nine smooth rational curves. More precisely, the curve $C\subset\p^7$  consists of five distinct lines, let us say $C_1,\ldots, C_5$, and of four conics $C_6,\ldots, C_9$ (see Subsection \ref{contid42} and the ancillary file \verb!code_section_6.m2!). Since $h^1(N_{C/U_j})=0$, the $C_i$'s are   $(-1)$-curves on $U_j$ for every $i=1,\ldots, 9$ and for $j=1,2$ by Lemma \ref{lemadjunction}. Let $U=U_1$ and let $H\subset U$ be a hyperplane section. Since $H\cdot K_U=-H^2+2g(H)-2=-21+36-2=13=H\cdot C$ we deduce from Lemma \ref{lemadjunction} the existence of the contraction $\pi:U\to U'$ of the  $C_i$'s to 9 distinct points $p_1,\ldots, p_9$ on a smooth K3 surface $U'$. The divisor  $H'=\pi_*(H)\subset U'$ is  ample  and  such that
$(H')^2=21+16+5=42$,  $g(H')=18+4=22$, proving that  
$U'$ is a  K3 surface of degree 42 and genus 22. For a general $X$ through $S_{42}$ we have that 
$U'$ is a  K3 surface of degree 42 and genus 22 with $\rk(\Pic(U'))=1$ (one can argue as  at the end of Subsection \ref{s38} or remark that $U'$ has necessarily 19 moduli equal to the moduli of  a general $X\in\mathcal C_{42}$) so that $|H'|$ gives an embedding $U'\subset\p^{22}$. The linear system $|H|$ on $U$  corresponds to the  hyperplane sections of $U'$ passing through $p_1,\ldots, p_{5}$ and having a point of multiplicity at least two at $p_{6},\ldots, p_9$. In particular, we see that these points do not impose independent conditions to hyperplane sections of $U'\subset\p^{22}$. 

By considering the map associated to the linear system $|H+C_1+\cdots+ C_5+2(C_6+\cdots+C_9)|$ on $U$,   we construct its image $U'\subset\p^{22}$, which is thus a general K3 surface of degree 42 and genus 22, see the ancillary file \verb!code_section_6.m2!. From this one can deduce an alternative proof of one of the main results in \cite{FarkasVerraC42}, according to which the moduli space of polarized K3 surfaces of degree 42 and genus 22 is unirational.

\section{Explicit examples of trisecant flops in \emph{Macaulay2}}\label{contiM2}

We mostly  
used 
the computer algebra system
\emph{Macaulay2} \cite{macaulay2}
with the packages 
\href{https://faculty.math.illinois.edu/Macaulay2/doc/Macaulay2/share/doc/Macaulay2/Cremona/html/index.html}{\emph{Cremona}} \cite{packageCremona} and 
\href{https://faculty.math.illinois.edu/Macaulay2/doc/Macaulay2/share/doc/Macaulay2/SpecialFanoFourfolds/html/index.html}{\emph{SpecialFanoFourfolds}} \cite{SpecialFanoFourfoldsSource} 
to study surfaces in $\mathbb{P}^5$ admitting congruences of $(3e-1)$-secant curves
of degree $e$ and the rational maps given by hypersurfaces of degree $3e-1$ 
having points of multiplicity $e$ along these surfaces. 
We refer to the documentations of these two packages for technical computational details.
In particular, the first one provides tools for working with rational maps, 
such as computing their fibers, 
checking birationality and determining inverse maps.
The validity of 
these computations relies on  the fact that 
the irreducible components $\mathcal{S}_d$ 
of the Hilbert schemes considered here 
are 
\emph{explicitly unirational}.
This means that we have a procedure to 
determine the equations of the generic member of $\mathcal{S}_d$
as a function of a 
number of specific independent variables.
Therefore, 
by 
adding more variables we can also take the generic point of $\mathbb{P}^5$
and compute, for instance, the generic fiber 
of the map defined by the cubics through 
the generic $[S_d]\in\mathcal S_d$, 
which will depend on all these variables. 
In practice this is far beyond 
what computers can do today. 
Anyway,
the answer we get is equivalent to the one obtained 
on the original field via a generic specialization 
of the variables 
and, above all, 
the generic specialization 
commutes with this type of computation. 

\subsection{Rationality of cubic fourfolds in $\mathcal C_{38}$}\label{contid38}
Here, we consider a specific example related to Subsection~\ref{s38} (row (iii) of Table~\ref{tabella}).

In the following  code, we 
 produce a 
surface $S=S_{38}\subset\PP^5$ 
obtained as the image
of $\p^2$ by the linear system of plane curves of degree 10 having 10 randomly-chosen triple points.
We work over the finite field $K=\mathbb{F}_{10000019}$ for speed reasons. 
{\footnotesize
\begin{Verbatim}[commandchars=&!$]
Macaulay2, version 1.19
&colore!darkorange$!i1 :$ &colore!airforceblue$!needsPackage$ "&colore!airforceblue$!SpecialFanoFourfolds$"; &colore!Sepia$!-- v2.5$
&colore!darkorange$!i2 :$ K = &colore!darkspringgreen$!ZZ$/10000019;
&colore!darkorange$!i3 :$ S = &colore!airforceblue$!surface$({10,0,0,10},K);
o3 : ProjectiveVariety, surface in PP^5
\end{Verbatim}
} \noindent 
We now compute the rational map $\mu$ defined by the linear system of 
quintic hypersurfaces of $\PP^5$ which are singular along $S$.\footnote{More generally,
the command \texttt{rationalMap(S,d,e)} returns 
the rational map defined by
a basis of the linear system $|H^0(\mathcal I_{S}^e(d))|$
of hypersurfaces of degree $d$ having points of multiplicity $e$ along $S$.} 
From the information obtained by its projective degrees 
we deduce
that $\mu$ is a dominant rational map onto $\PP^4$
with generic fibre of dimension $1$ and degree $2$ and with
base locus of dimension $3$ and degree $5^2-19=6$.
{\footnotesize
\begin{Verbatim}[commandchars=&\[\]]
&colore[darkorange][i4 :] mu = &colore[airforceblue][rationalMap](S,5,2);
o4 : RationalMap (rational map from PP^5 to PP^4)
&colore[darkorange][i5 :] &colore[airforceblue][projectiveDegrees] mu
o5 = {1, 5, 19, 13, 2, 0}
\end{Verbatim}
} \noindent 
Next we compute a special random fibre $F$ of the map $\mu$.
{\footnotesize
\begin{Verbatim}[commandchars=&\[\]]
&colore[darkorange][i6 :] p = &colore[airforceblue][point source] mu;  &colore[Sepia][-- a random point on P^5]
&colore[darkorange][i7 :] F = mu^* mu p;             
\end{Verbatim}
} \noindent 
It easy to verify directly that $F$ is an irreducible $5$-secant conic to $S$ passing through $p$. 
Letting $\varphi:\PP^5\dashrightarrow\PP^9$ 
the rational map defined by the linear system of cubics through $S$,
one can also see that 
$F$ 
coincides with the pull-back $\overline{\varphi^{-1}(L)}$ 
of the unique line $L\subset \overline{\varphi(\PP^5)}\subset\PP^9$ 
 passing through $\varphi(p)$ that is not the image 
 of a secant line to $S$ passing through $p$ (see \cite[Section~5]{RS1} for details on this computation).
 Finally, the following lines of code tell us
that the restriction $\mu'$ of $\mu$ to a randomly-chosen cubic fourfold $X$ containing $S$ is a birational map 
whose inverse map is defined by forms of degree $9$ and whose base locus scheme
has dimension $2$ and degree $9^2 - 27 = 54$.
{\footnotesize
\begin{Verbatim}[commandchars=&\[\]]
&colore[darkorange][i8 :] X = &colore[airforceblue][random](3,S);
o8 : ProjectiveVariety, hypersurface in PP^5
&colore[darkorange][i9 :] mu' = mu|X;
o9 : RationalMap (rational map from X to PP^4)
&colore[darkorange][i10 :] &colore[airforceblue][projectiveDegrees] mu'
o10 = {3, 15, 27, 9, 1}
\end{Verbatim}
} \noindent 
The smooth surface $U\subset\PP^4$ of degree $12$ and sectional genus $14$ 
determining the inverse map of $\mu':X\dashrightarrow\PP^4$ 
can be calculated as the non-reduced part of the base locus scheme of $\mu'^{-1}$.
Alternatively, one can use the same method described in the next subsection.
The full code to determine $U$, the exceptional curves on $U$, and the map $U\dashrightarrow U'\subset\PP^{20}$
onto the K3 surface $U'\subset\PP^{20}$ of degree $38$ and genus $20$
is included in the ancillary file \verb!code_section_6.m2!.
We omit it here for the sake of brevity.

\subsection{Rationality of cubic fourfolds in $\mathcal C_{42}$}\label{contid42}
Here, we perform similar calculations as above, but considering a specific example related to Subsection~\ref{sez C42 nuova} (row (0) of Table~\ref{tabella}).

Using the algorithm given in Remark~\ref{algorithm S42}, 
one can calculate the homogeneous ideal of a randomly-chosen
surface $S=S_{42}$ in the
$48$-dimensional family $\mathcal S_{42}$
constructed in Proposition~\ref{S42V}.
This has been implemented in the \emph{Macaulay2} package \emph{SpecialFanoFourfolds}. So, 
to get the ideal of such a surface $S$ and of a randomly-chosen (smooth) cubic fourfold $X$ containing it, 
it is enough to run the following code:
{\footnotesize
\begin{Verbatim}[commandchars=&!$]
&colore!darkorange$!i11 :$ X = &colore!airforceblue$!specialCubicFourfold$("general cubic 4-fold of discriminant 42",K);
o11 : ProjectiveVariety, cubic fourfold containing a surface of degree 9 and sectional genus 2
&colore!darkorange$!i12 :$ S = &colore!airforceblue$!ideal surface$ X;
\end{Verbatim}
} \noindent 
The following is one of the ways to
compute relatively quickly 
the rational map $\mu$ defined by the linear system of 
octic hypersurfaces of $\PP^5$ having triple points along $S$. This calculation takes about one minute.
{\footnotesize
\begin{Verbatim}[commandchars=&!$]
&colore!darkorange$!i13 :$ mu = &colore!airforceblue$!rationalMap$(S^3 : &colore!airforceblue$!first gens ring$ S,8);
o13 : RationalMap (rational map from PP^5 to PP^7)
\end{Verbatim}
} \noindent 
Now, with the same code used above, we compute a special random fibre $F$ of the map $\mu$.
{\footnotesize
\begin{Verbatim}[commandchars=&!$]
&colore!darkorange$!i14 :$ p = &colore!airforceblue$!point source$ mu;
&colore!darkorange$!i15 :$ F = mu^* mu p;
\end{Verbatim}
} \noindent 
Here is a practical way to get that $F$ is a twisted cubic curve which is $8$-secant to $S$.
{\footnotesize
\begin{Verbatim}[commandchars=&!$]
&colore!darkorange$!i16 :$ ? F
o16 = smooth cubic curve of genus 0 in PP^5 cut out by 5 hypersurfaces of degrees (1,1,2,2,2)
&colore!darkorange$!i17 :$ ? (F + S)
o17 = 0-dimensional subscheme of degree 8 in PP^5
\end{Verbatim}
} \noindent 
The code above also tells us that the (closure of the) image of $\mu$ is a subvariety 
$W\subset\PP^7$ of dimension $4$. To get the equations of $W$, one can use the command \texttt{image mu},
but this takes a while. 
A faster way is to calculate the scheme of quadrics containing $W$,
as shown below. Then, 
since one verifies easily that it is a smooth connected fourfold,
we deduce that $W$ coincides with this fourfold.
{\footnotesize
\begin{Verbatim}[commandchars=&!$]
&colore!darkorange$!i18 :$ W = &colore!airforceblue$!image$(2,mu);
\end{Verbatim}
} \noindent 
The surface $U\subset W\subset\PP^7$
determining the inverse map of
the restriction of $\mu$ to the cubic fourfold $X$
can be find without computing the inverse map. Indeed,
the intersection of $W$ with a general cubic hypersurface through $U$ 
is given by the image $\overline{\mu(X\cap X')}\subset W$, where $X'$ 
is a general cubic hypersurface through $S$.
For efficiency, we suggest to calculate 
$\overline{\mu(X\cap X')}$ by interpolating 
the images via $\mu$ of several points on $X\cap X'$.
The following is a possible implementation. It 
gives the homogeneous ideal of $U\subset\PP^7$ and takes about 5 minutes.
{\footnotesize
\begin{Verbatim}[commandchars=&!$]
&colore!darkorange$!i19 :$ U = &colore!airforceblue$!trim sum$(8,j->(X' = &colore!airforceblue$!random$(3,surface X);
               &colore!airforceblue$!ideal take$((&colore!airforceblue$!intersect apply$(90,i->mu &colore!airforceblue$!ideal point$(X * X')))_*,6)));
\end{Verbatim}
} \noindent 
The exceptional curves in the surface $U$ can be determined by taking 
another randomly-chosen cubic fourfold $\tilde{X}$ through $S$ (for instance, as in line \texttt{i8}), and then 
by calculating the corresponding surface $\tilde{U}$ (for instance, by re-executing the line \texttt{i19}).
One expects that  the exceptional curves in $\tilde{U}$ are the same as those in $U$ (see Section~\ref{geomet}),
so one tries to determine them as the top-dimensional components of the intersection $U\cap\tilde{U}$ 
(this can be done quickly after taking generic projections in $\PP^3$ and $\PP^2$). 
In the cases under consideration, the one-dimensional
components of this
intersection are 9 disjoint curves:
5 lines $C_1,\ldots,C_5$ and 4 conics $C_6,\ldots,C_9$, which are all the 
$(-1)$-curves on $U$ (see also Subsection \ref{TF42}).
Having determined these curves, one can also calculate 
the map $f:U\to U'\subset\PP^{22}$ onto the K3 surface $U'\subset\PP^{22}$ 
of degree $42$ and genus $22$,  given by the linear system $|H+\sum_{i=1}^5 C_i + 2 \sum_{i=1}^{4} C_{5+i}|$,
where $H$ denotes the hyperplane section of $U$.
For the sake of brevity, the full code to get the map $f$ and the equations of its image is omitted  here and it is included in the ancillary file \verb!code_section_6.m2! together with all the lines of code given in this section. 
Note also that this kind of calculations
can be automated using the function
\href{https://faculty.math.illinois.edu/Macaulay2/doc/Macaulay2/share/doc/Macaulay2/SpecialFanoFourfolds/html/_associated__K3surface_lp__Special__Cubic__Fourfold_rp.html}{\texttt{associatedK3surface}}
provided by the package \emph{SpecialFanoFourfolds}.

\subsection{Other worked out examples of trisecant flops}\label{contiTables} 
We provide a \emph{Macaulay2}  package  named \emph{TrisecantFlops},\footnote{It is available at \url{https://github.com/giovannistagliano/TrisecantFlops}.}
which produces explicit examples of trisecant flops in accordance to Table~\ref{tabella} in the next section.
This package can be downloaded automatically from \emph{SpecialFanoFourfolds}.
So, by typing \texttt{trisecantFlop i} (where $i$ is an integer between $0$ and  $17$), 
will build up a birational map $\mu: X\dashrightarrow W$ as in the $i$-th row of Table~\ref{tabella}.
For instance, we now consider the 
third 
example.
{\footnotesize
\begin{Verbatim}[commandchars=&!$]
&colore!darkorange$!i20 :$ mu = &colore!airforceblue$!trisecantFlop$ 3;
o20 : RationalMap (birational map from cubic fourfold containing 
      a surface of degree 10 and sectional genus 6 to PP^4)
&colore!darkorange$!i21 :$ &colore!airforceblue$!projectiveDegrees inverse$ mu
o21 = {1, 9, 27, 15, 3}
\end{Verbatim}
} \noindent 
We can obtain the smooth surface $S\subset\mathbb{P}^5$ of degree $10$ and sectional genus $6$ by giving the following command. 
{\footnotesize
\begin{Verbatim}[commandchars=&!$] 
&colore!darkorange$!i22 :$ S = &colore!airforceblue$!surface source$ mu;
o22 : ProjectiveVariety, surface in PP^5
&colore!darkorange$!i23 :$ (degree S, sectionalGenus S)
o23 = (10, 6)
\end{Verbatim}
} \noindent 
Analogous, the non--minimal K3 surface $U\subset\mathbb{P}^4$ is obtained as follows.
{\footnotesize
\begin{Verbatim}[commandchars=&!$] 
&colore!darkorange$!i24 :$ U = &colore!airforceblue$!surface target$ mu;
o24 : ProjectiveVariety, surface in PP^4
&colore!darkorange$!i25 :$ (degree U, sectionalGenus U)
o25 = (12, 14)
\end{Verbatim}
} \noindent 
Finally, the following command yields an extension to $\PP^5$ of the map $\mu:X\dashrightarrow W=\PP^4$ 
whose general fibre is a $5$-secant conic to the surface $S$.
{\footnotesize
\begin{Verbatim}[commandchars=&!$] 
&colore!darkorange$!i26 :$ &colore!airforceblue$!extend$ mu; 
o26 : RationalMap (dominant rational map from PP^5 to PP^4)
\end{Verbatim}
}

\section{Summary table of examples of trisecant flops}\label{sez Tabelle}

We provide in Table~\ref{tabella} a list of 18 examples of
  maps $\mu: X\dashrightarrow W$ as in diagram \eqref{diagramma1}, where
 $X$ is a cubic fourfold in $\mathcal C_d$ 
 which contains a surface $S\subset\PP^5$ 
 admitting a congruence of $(3e-1)$-secant rational  curves of degree $e$. There
 are some different behaviours:
 \begin{itemize}
 \item $[X]\in\mathcal C_d$ is general except in  ({xi}) and ({xii}); 
 \item $S\subset\mathbb{P}^5$ is cut out 
by cubics except in  ({xii}), ({xiii}), ({xiv}), and ({xvii});  
\item the map $X\dashrightarrow Y$ defined by the cubics through $S$ is birational except in ({xv}); 
\item the cubics through $S$ satisfy the condition $\mathcal K_3$ except  
in ({0}), ({v}), ({viii}),  ({x})--({xvii}).
\end{itemize}
In Table~\ref{tabella3}, we 
give some additional information on 
the examples of surfaces $S\subset X\subset\PP^5$ considered in Table~\ref{tabella}.
Most of this was achieved using  
the functions \href{https://faculty.math.illinois.edu/Macaulay2/doc/Macaulay2/share/doc/Macaulay2/SpecialFanoFourfolds/html/_detect__Congruence_lp__Special__Cubic__Fourfold_cm__Z__Z_rp.html}{\texttt{detectCongruence}} and \href{https://faculty.math.illinois.edu/Macaulay2/doc/Macaulay2/share/doc/Macaulay2/SpecialFanoFourfolds/html/_parameter__Count.html}{\texttt{parameterCount}}
from the package \emph{SpecialFanoFourfolds}, see also Subsection \ref{contiTables}. 

When the corresponding surface $U\subset W$ is smooth, the proof that it is a (non-minimal) K3 surfaces follows the paths used in Section~\ref{geomet} (see also Subsection~\ref{TF42}) by determining explicitly the exceptional curves on $U$. Once these exceptional curves are determined, one finds the description of the linear system on $U$ in terms of the hyperplane sections of the K3 surface $U'$. 
When $U\subset W$ is singular, one first determines the exceptional curves as above; then takes a {linear normalization} to obtain a smooth surface and finally, if necessary, follows the previous path.

\begin{table}[htbp]
\centering 
\tabcolsep=0.7pt 
\begin{adjustbox}{width=\textwidth}
\begin{tabular}{|c|cccccc|}
\hline
& \hspace{3pt}$d$\hspace{3pt} & \hspace{3pt}$e$\hspace{3pt} & {$S\subset X\subset \PP^5$} & {$W$} & {$U\subset W$} & $Y$ \\
\hline \hline 
0 & $42$ & $3$ & \normalsize{\begin{tabular}{c} Rational surface of degree $9$ and sectional \\ genus $2$ with $5$ nodes, which is a special \\ projection of the image of $\PP^2$ in $\PP^8$  via \\ the linear system  of quartic curves with \\$3$  simple points and one double point  \end{tabular}} & $\GG(1,4)\cap\PP^7\subset\PP^7$ & \normalsize{\begin{tabular}{c} Non-minimal K3 surface of degree $21$ \\and sectional genus $18$, cut out in $\PP^{7}$ \\ by  $5$ quadrics  and $8$ cubics \end{tabular}} & \normalsize{\begin{tabular}{c} $4$-fold of degree $14$ in $\PP^{7}$ \\ cut out by $7$ cubics \end{tabular}} \\
\hline 
\rowcolor{gray!5.0}
i & $14$ & $2$ & \normalsize{\begin{tabular}{c} Isomorphic projection of a smooth surface \\in $\p^6$ of degree $8$ and sectional genus $3$,\\ obtained as the image of $\p^2$ via the linear \\system of quartic curves with $8$ general \\ base points \end{tabular}} & $\PP^4$ & \normalsize{\begin{tabular}{c} Singular $K3$ surface of degree $10$ \\and   sectional genus $7$, cut out by $12$ \\quintics  and having $8$  singular points \end{tabular}} & \normalsize{\begin{tabular}{c} $4$-fold of degree $28$ in $\PP^{11}$ \\ cut out by $16$ quadrics \end{tabular}} \\
\hline
ii & ${26}$ & $2$ & \normalsize{\begin{tabular}{c} Rational scroll of degree $7$ with $3$ nodes  \end{tabular}} & $\PP^4$ & \normalsize{\begin{tabular}{c} Singular K3 surface of degree $10$ \\ and sectional genus $8$, cut out \\ by $12$ quintics and one sextic, \\ and having $3$ singular points  \end{tabular}} & \normalsize{\begin{tabular}{c} $4$-fold of degree $29$ in $\PP^{11}$ \\ cut out by $15$ quadrics \end{tabular}}\\
\hline
\rowcolor{gray!5.0}
iii & ${38}$ & $2$ & \normalsize{\begin{tabular}{c} Smooth surface of degree $10$ and sectional \\ genus $6$, obtained as the image of $\PP^2$ via \\ the linear system  of curves of degree $10$\\  with $10$ general triple points \end{tabular}} & $\PP^4$ & \normalsize{\begin{tabular}{c} Smooth non-minimal K3 surface of\\ degree $12$ and sectional genus $14$ \\ cut out by $9$ quintics \end{tabular}} & \normalsize{\begin{tabular}{c} $4$-fold of degree $20$ in $\PP^{8}$ \\ cut out by $16$ cubics \end{tabular}} \\
\hline
iv & ${26}$ & $2$ & \normalsize{\begin{tabular}{c} Projection of a smooth del Pezzo surface \\ of degree $7$ in $\PP^7$  from a line intersecting \\ the secant variety in one general point \end{tabular}} & $\GG(1,4)\cap\PP^7\subset\PP^7$ & \normalsize{\begin{tabular}{c} Non-minimal K3 surface of degree $17$ \\and sectional genus $11$, cut out in $\PP^{7}$ \\ by  $5$ quadrics  and $13$ cubics \end{tabular}} & \normalsize{\begin{tabular}{c} $4$-fold of degree $34$ in $\PP^{12}$ \\ cut out by $20$ quadrics \end{tabular}} \\
\hline
\rowcolor{gray!5.0}
v & ${38}$ & $3$ & \normalsize{\begin{tabular}{c} Rational scroll of degree $8$ with $6$ nodes  \end{tabular}} & $\GG(1,5)\cap\PP^{10}\subset\PP^{10}$ & \normalsize{\begin{tabular}{c} Smooth non-minimal K3 surface \\ of degree $22$ and sectional genus $14$, \\ cut out in $\PP^{10}$ by  $24$ quadrics \end{tabular}} & \normalsize{\begin{tabular}{c} $4$-fold of degree $17$ in $\PP^{8}$ \\ cut out by $3$ quadrics and \\$4$ cubics \end{tabular}}\\
\hline
vi & ${14}$ & $3$ & \normalsize{\begin{tabular}{c} Projection from $3$ general internal \\points of a minimal $K3$ surface of \\ degree $14$ and sectional genus $8$ \end{tabular}} & \normalsize{\begin{tabular}{c} Cubic fourfold \end{tabular}} & \normalsize{\begin{tabular}{c} Projection from $3$ general internal \\ points of a minimal $K3$ surface of \\ degree $14$ and sectional genus $8$ \end{tabular}} & \normalsize{\begin{tabular}{c} Complete intersection in $\PP^{7}$ \\ of $2$ quadrics and one cubic \end{tabular}} \\
\hline
\rowcolor{gray!5.0}
vii & ${14}$ &  $3$ & \normalsize{\begin{tabular}{c} Projection of a $K3$ surface of degree $10$ \\ and sectional genus $6$ in $\PP^6$ from a \\ general point on its secant variety \end{tabular}} & \normalsize{\begin{tabular}{c} Gushel-Mukai fourfold in $\PP^8$ \end{tabular}} & \normalsize{\begin{tabular}{c} Smooth minimal K3 surface of \\ degree $14$ and sectional genus $8$ \end{tabular}} & \normalsize{\begin{tabular}{c} Hypercubic section of a \\hyperplane  section of $\mathbb{G}(1,4)$  \end{tabular}}\\
\hline
viii & ${14}$ &  $5$ & \normalsize{\begin{tabular}{c} General hyperplane section of a conic \\ bundle  in $\PP^6$ of degree $13$ and \\ sectional genus $12$  \end{tabular}} & \normalsize{\begin{tabular}{c} Complete intersection of \\ three quadrics in $\PP^7$ \end{tabular}} & \normalsize{\begin{tabular}{c} Smooth non-minimal K3 surface \\of  degree $13$ and sectional genus $8$, \\ cut out by $9$ quadrics \end{tabular}} & \normalsize{\begin{tabular}{c} Hypersurface of degree $5$ in $\PP^5$ \end{tabular}}\\
\hline
\rowcolor{gray!5.0}
ix & ${14}$ & $5$ & \normalsize{\begin{tabular}{c} General hyperplane section of a pfaffian \\ threefold  in $\PP^6$ of degree $14$ and \\ sectional genus $15$  \end{tabular}} & $\mathbb{G}(1,6)\cap \PP^{14}\subset\PP^{14}$ & \normalsize{\begin{tabular}{c} Smooth minimal K3 surface of \\ degree $14$ and sectional genus $8$ \\ embedded in $\PP^8\subset\PP^{14}$\end{tabular}} & \normalsize{\begin{tabular}{c} Hypersurface of degree $5$ in $\PP^5$ \end{tabular}} \\
\hline
x & ${38}$ & $5$ & \normalsize{\begin{tabular}{c} Smooth surface of degree $11$ and sectional \\ genus $7$, obtained as the image of $\PP^2$ via \\ the linear system  of curves of degree $12$ with \\ one general simple point, $4$ general triple \\ points, and $6$ general quadruple points  \end{tabular}} & $\GG(1,5)\cap\PP^{10}\subset\PP^{10}$ & \normalsize{\begin{tabular}{c} Smooth non-minimal K3 surface \\ of degree $25$ and sectional  genus \\ $17$,  cut out in $\PP^{10}$ by  $21$ quadrics \end{tabular}} & \normalsize{\begin{tabular}{c} Hypersurface of degree $7$ in $\PP^5$ \end{tabular}}\\
\hline
\rowcolor{gray!5.0}
xi & $38$ & $3$ & \normalsize{\begin{tabular}{c} Projection of an octic del Pezzo surface \\ isomorphic to $\mathbb{F}_1$  from a plane  intersecting \\ the secant variety in $3$  general points  \end{tabular}} & $\mathbb{G}(1,3)\subset\mathbb{P}^5$ & \normalsize{\begin{tabular}{c} Non-minimal $K3$ surface of degree $13$ \\and   sectional genus $10$, cut out in $\mathbb{P}^5$ by \\ one quadric, $9$ quartics, and $3$ quintics \end{tabular}} & \normalsize{\begin{tabular}{c} $4$-fold of degree $17$ in $\mathbb{P}^{8}$ \\ cut out by $3$ quadrics and \\  $4$ cubics \end{tabular}} \\
\hline 
xii & $38$ & $3$ & \normalsize{\begin{tabular}{c} Projection of an octic del Pezzo surface \\ isomorphic to $\mathbb{F}_0$  from a plane  intersecting \\ the secant variety in $3$  general points \\ (cut out by $10$ cubics and one quartic) \end{tabular}} & $\mathrm{LG}_3(\mathbb{C}^6)\cap\mathbb{P}^{11}\subset\mathbb{P}^{11}$ & \normalsize{\begin{tabular}{c} Non-minimal $K3$ surface  of degree $26$ \\ and   sectional genus $17$,  cut out in $\mathbb{P}^{11}$ \\ by $30$ quadrics \end{tabular}} & \normalsize{\begin{tabular}{c} $4$-fold of degree $18$ in $\mathbb{P}^{8}$ \\ cut out by $2$ quadrics and \\ $8$ cubics \end{tabular}} \\
\hline 
\rowcolor{gray!5.0}
xiii & $14$ & $3$ & \normalsize{\begin{tabular}{c} Isomorphic projection of a smooth surface \\in $\mathbb{P}^7$ of degree $8$ and sectional genus $2$,\\ obtained as the image of $\mathbb{P}^2$ via the linear \\system of quartic curves with $4$ simple \\ base points and one double point \\ (cut out by $10$ cubics and $3$ quartics) \end{tabular}} & \normalsize{\begin{tabular}{c} Complete intersection \\ of $2$ quadrics in $\mathbb{P}^6$ \end{tabular}} & \normalsize{\begin{tabular}{c} Singular $K3$ surface of degree $14$ \\and   sectional genus $8$, cut out \\ in $\mathbb{P}^{6}$ by $2$ quadrics and $9$ cubics, \\ and having one singular point \end{tabular}} & \normalsize{\begin{tabular}{c} Complete intersection \\ of $4$ quadrics in $\mathbb{P}^8$ \end{tabular}} \\
\hline 
xiv & $26$ & $5$ & \normalsize{\begin{tabular}{c}  Rational scroll of degree $8$ with $4$ nodes \\ (cut out by $8$ cubics and $3$ quartics) \end{tabular}} & $\mathbb{G}(1,3)\subset\mathbb{P}^5$ & \normalsize{\begin{tabular}{c} Non-minimal $K3$ surface of degree $14$ \\and   sectional genus $11$, cut out in $\mathbb{P}^5$ by \\ one quadric, $7$ quartics, and $2$ quintics \end{tabular}} & \normalsize{\begin{tabular}{c} Complete intersection in $\mathbb{P}^6$ \\of a quadric and a quartic \end{tabular}} \\
\hline
\rowcolor{gray!5.0}
xv & $26$ & $5$ & \normalsize{\begin{tabular}{c} Surface of degree $13$ and sectional genus $11$ \\ cut out by $6$ cubics and with an ordinary  node, \\ which is obtained as a special projection of \\ a minimal K3 surface of degree $26$ of genus $14$ \end{tabular}} & Cubic fourfold & \normalsize{\begin{tabular}{c} A surface of the same kind as $S$ \end{tabular}} & \normalsize{\begin{tabular}{c} -- \end{tabular}} \\
\hline 
xvi & $26$ & $6$ & \normalsize{\begin{tabular}{c} Surface of degree $11$ and sectional genus $6$ \\ cut out by $7$ cubics and with $3$ non-normal nodes, \\ which is obtained as a special projection of a \\ smooth surface of degree $11$ and sec. genus $6$ in $\PP^6$ \end{tabular}} & \normalsize{\begin{tabular}{c} $\mathbb{S}^{10}\cap\mathbb{P}^{9}\subset\mathbb{P}^{9}$, where \\ $\mathbb{S}^{10}\subset\mathbb{P}^{15}$  is the spinorial \\ variety \end{tabular}} & \normalsize{\begin{tabular}{c} Non-minimal $K3$ surface  of degree $21$ \\ and   sectional genus $14$,  cut out in $\mathbb{P}^{9}$ \\ by $16$ quadrics and one cubic \end{tabular}} & \normalsize{\begin{tabular}{c} Hypersurface of degree $5$ in $\mathbb{P}^{5}$ \end{tabular}} \\
\hline 
\rowcolor{gray!5.0}
xvii & $26$ & $5$ & \normalsize{\begin{tabular}{c} Smooth surface of degree $11$ and sectional \\ genus $7$, obtained as the image of $\PP^2$ via \\ the linear system of curves of degree $8$ with \\ $3$ simple base points,  $8$ general double \\ points, and $2$ general triple points \\ (cut out by $7$ cubics and one quartic) \end{tabular}} & \normalsize{\begin{tabular}{c} $\mathbb{G}(1,3)\subset\PP^5$ \end{tabular}} & \normalsize{\begin{tabular}{c} Singular K3 surface of degree $15$ \\ and sectional genus $12$, cut out \\ in $\mathbb{P}^5$ by one quadric and $6$ quartics, \\ and having $9$ singular points \end{tabular}} & \normalsize{\begin{tabular}{c} Hypersurface of degree $6$ in $\mathbb{P}^5$ \end{tabular}} \\
\hline
\end{tabular} 
\end{adjustbox}
 \caption{\scriptsize{Examples of maps $\mu: X\dashrightarrow W$ as in diagramm \eqref{diagramma1}, where
 $[X]\in \mathcal C_d$ and $S\subset X$
 admits a congruence of $(3e-1)$-secant rational  curves of degree $e$.}}
\label{tabella} 
\end{table}

\begin{table}[htbp]
\centering 
\tabcolsep=1.5pt 
\begin{adjustbox}{width=\textwidth}
\begin{tabular}{|c|ccccc|ccc|}
\hline
\rowcolor{gray!5.0}
     & \footnotesize{\begin{tabular}{c} $2$-secant lines to $S$ \\ passing through $p$ \end{tabular}} & \footnotesize{\begin{tabular}{c} $5$-secant conics to $S$ \\ passing through $p$ \end{tabular}} & \footnotesize{\begin{tabular}{c} $8$-secant cubics to $S$ \\ passing through $p$\end{tabular}} & \footnotesize{\begin{tabular}{c} $11$-secant quartics to $S$ \\ passing through $p$ \end{tabular}} & \footnotesize{\begin{tabular}{c} $14$-secant quintics to $S$ \\ passing through $p$ \end{tabular}} & $h^0(\mathcal{I}_{S/\PP^5}(3))$ & $h^0(N_{S/\PP^5})$ & $h^0(N_{S/X})$ \\
\hline 
0 & $9$ & $7$ & $1$ & $0$ & $0$ & $9$ & $48$ & $2$ \\
\rowcolor{gray!5.0}
i  & $7$  & $1$ &  $0$ & $0$ & $0$ & $13$ & $49$ & $7$ \\
ii  & $7$  & $1$ &  $0$ & $0$ & $0$ & $13$ & $44$ & $2$ \\
\rowcolor{gray!5.0}
iii  & $7$  & $1$ &  $0$ & $0$ & $0$ & $10$ & $47$ & $2$ \\
iv  & $5$  & $1$ &  $0$  & $0$ & $0$ & $14$ & $42$ & $1$ \\
\rowcolor{gray!5.0}
v  & $9$  & $4$ &  $1$ & $0$ & $0$ & $10$ & $47$ & $2$   \\
vi & $13$  & $10$ &  $1$ & $0$ & $0$ & $9$ & $60$ & $14$ \\
\rowcolor{gray!5.0}
vii & $11$  & $6$ &  $1$ & $0$ & $0$ & $10$ & $59$ & $14$ \\
viii & $19$ & $44$ & $48$ & $8$ & $1$ & $7$ & $68$ & $20$ \\
\rowcolor{gray!5.0}
ix & $21$ & $56$ & $42$ & $0$ & $1$ & $7$ & $77$ & $29$ \\
x & $11$ & $16$ & $16$ & $4$ & $1$ & $7$ & $49$ & $1$ \\
\rowcolor{gray!5.0}
xi  & $7$  & $6$ &  $1$ & $0$ & $0$ & $10$ & $44$ & $0$  \\
xii  & $7$  & $4$ &  $1$ & $0$ & $0$ & $10$ & $44$ & $0$  \\
\rowcolor{gray!5.0}
xiii  & $9$  & $6$ &  $1$ & $0$ & $0$ & $10$ & $49$ & $4$ \\
xiv & $11$ & $12$ & $16$ & $8$ & $1$ & $8$ & $49$ & $2$ \\
\rowcolor{gray!5.0}
xv  & $19$  & -- &  -- & -- & $1$ & $6$ & $58$ & $9$ \\
xvi  & $15$  & $31$ &  $44$ & $24$ & $5$ & $7$ & $56$ & $8$ \\
\rowcolor{gray!5.0}
xvii  & $13$  & $22$ & $26$ & $10$ & $1$ & $7$ & $53$ & $5$ \\
\hline
\end{tabular}
\end{adjustbox}
 \caption{\scriptsize{Surfaces $S$ contained in a cubic fourfold $X\subset\PP^5$ as in Table~\ref{tabella}; $p$ is a general point of $\PP^5$.}} 
\label{tabella3} 
\end{table}

\providecommand{\bysame}{\leavevmode\hbox to3em{\hrulefill}\thinspace}
\providecommand{\MR}{\relax\ifhmode\unskip\space\fi MR }
\providecommand{\MRhref}[2]{%
  \href{http://www.ams.org/mathscinet-getitem?mr=#1}{#2}
}
\providecommand{\href}[2]{#2}

\end{document}